\newtheorem{theorem}{Theorem}
\newtheorem{claim}{Claim}
\newtheorem{lemma}{Lemma}
\newtheorem{proposition}{Proposition}
\newtheorem{corollary}{Corollary}
\def\a{\alpha}
\def\b{\beta}
\def\g{\gamma}
\def\A{\mathcal{A}}
\def\G{\Gamma_x}
\def\R{\mathbb{R}}
\def\N{\mathbb{N}}
\def\d{\delta}
\def\o{\omega}
\def\O{\Omega}
\def\cavu{(\text{Cav}\, u)}
\def\p{\pi_M}
\def\dk{\Delta(K)}
\def\k{\kappa}
\def\l{\ell}
\def\R{\mathbb{R}}
\def\s{\sigma}
\def\T{\mathcal{T}}
\def\t{\tau}
\def\x{\xi}
\def\argmax{\text{arg\, max}}
\newcommand{\headings}[1]{\vskip .3cm \noindent \textbf {#1} \hskip .4cm }
\newcommand{\ignore}[1]{}
\begin{document}
\title{On the Monotonicity and Rate of Convergence of the Markovian Persuasion Value}


\author{Dimitry Shaiderman\footnote{Department of Mathematics, Hebrew University of Jerusalem, Jerusalem 9190401, Israel, email: dima.shaiderman@gmail.com. The author wishes to thank the anonymous referees and AE for their detailed comments and suggestions, which were of great help in advancing this work and improving the manuscript. The author also wishes to thank Ehud Lehrer, Eilon Solan, and Andrei Iaboc for their valuable comments and remarks.} }


\date{\today}
\maketitle
\thispagestyle{empty}

\begin{abstract}
{\footnotesize We study a dynamic Bayesian persuasion model called Markovian persuasion. In such a model, the belief of the receiver regarding the current state of a Markov chain $(X_n)_{n\geq 1}$, over a finite state space $K$, is controlled through signals she obtains from a sender, who observes $(X_n)_{n\geq 1}$ in real time. At each stage $n\geq 1$, the receiver takes an action based on his current belief, which together with the realized state of $X_n$, determines the $n$'th stage payoff of the sender. The sender's goal in a Markovian persuasion game is to find a signaling policy that maximizes her expected $\delta$-discounted sum of stage payoffs for a discount factor $\d \in [0,1)$. We show that starting from any invariant distribution $(X_n)_{n\geq 1}$ the trajectory of the $\d$-discounted value is a monotone decreasing in $\d$. By combining this result with the opposite increasing monotone trajectories found in Lehrer and S.\ (2025, \cite{MPwSR}), we are able to derive an upper bound on the rate of convergence of the $\d$-discounted values (as $\d \to 1^-$) in the case where $(X_n)_{n\geq 1}$ is ergodic. The results for the Markovian persuasion model are then extended to the Markov chain games model of Renault (2006, \cite{Renault}). 
}

\vskip .2cm

\noindent{\footnotesize \textbf{Keywords: } Markovian persuasion, repeated games with incomplete information on one-side, Markov chains, discounted values, monotone trajectories, rate of convergence, mixing times}\\
\noindent{\footnotesize \textbf{MSC2000 Subject Classification:} 91A25, 91A27, 91A28}\\
\noindent{\footnotesize \textbf{JEL Codes:}D82, D83}
\end{abstract}

{
\large 
\itshape 

To Israel and Raisa Poliachenko,

}

\section{Introduction}

\textbf{The Markovian persuasion game.}
The study of information provision is central in many economic fields and models. In the field of Bayesian persuasion, one studies the optimal way in which an informed party, the sender, should manipulate the beliefs of a decision maker, the receiver, using her private information. The baseline model in the field, introduced and studied by Gentzkow and Kamenica \cite{Kamenica}, concentrates on a static situation, where the sender transmits to the receiver a stochastic signal, whose correlation with the possible states of nature determines the amount of information the sender wishes to share. By committing to her information provision policy, the sender allows the receiver to update his belief over the possible states upon obtaining the sender's realized signal. Based on his updated belief, the receiver takes an action, which together with the true state determines the payoffs of both parties. A famous example of such a game, given in \cite{Kamenica}, is that where a prosecutor, who is fully informed about the state of nature, i.e., whether the suspect is guilty or innocent, wishes to persuade a judge, for instance, to convict the suspect of a crime.     

A natural extension of the baseline model of Gentzkow and Kamenica, concerns the case where the sender and the receiver engage in a repeated game, in which the true state evolves over time stochastically according to a discrete-time Markov chain $(X_n)_{n\geq 1}$. This dynamic Bayesian persuasion problem was studied by Ely \cite{Ely}, Renault, Solan, and Vieille \cite{Solan}, Farhadi and Teneketzis \cite{Farhadi}, and Lehrer and Shaiderman \cite{MP, MPwSR}.
Compared to the static model of Gentzkow and Kamenica \cite{Kamenica}, where the sender acts myopically, in the dynamic model the information provision policy of the sender affects both her current stage payoff as well as her future stage payoffs.

Optimal information provision policies in dynamic Bayesian persuasion games were described by Ely \cite{Ely}, Renault, Solan, and Vieille \cite{Solan}, Farhadi and Teneketzis \cite{Farhadi}, for stylized problems, involving special families of Markov chains and payoff functions.

As in Renault, Solan, and Vieille \cite{Solan} and Lehrer and Shaiderman \cite{MP,MPwSR}, we assume that the sender discounts her stage payoffs by a discount factor $\delta \in [0,1)$. The optimization problem of the sender is thus to find an information provision policy that maximizes her expected $\delta$-discounted sum of stage payoffs. Under such optimal policy, the sender's payoff is termed the \textit{$\delta$-discounted value}. The dynamic Bayesian persuasion problem at hand will be referred to as a Markovian persuasion game.

\textbf{The main results.}
Our first main result, stated in Theorem \ref{Thm1}, shows that in the Markovian persuasion game, whenever the Markov chain $(X_n)_{n\geq 1}$ is stationary--i.e., starts from an invariant distribution of the chain, the $\delta$-discounted values decrease as $\delta$ grows. Qualitatively, such result states that under stationary dynamics, the sender cannot benefit from patience. 

The result of Theorem \ref{Thm1} adds to recent results of Lehrer and Shaiderman \cite{MPwSR} regarding the analytic behavior of the $\d$-discounted value, as $\d$ varies. As shown in \cite{MPwSR}, when $(X_n)_{n\geq 1}$ is irreducible, there exist weighted averages of the $\d$-discounted values evaluated at various sets of initial distributions for $(X_n)_{n\geq 1}$, that increase in $\d$. In particular, there exist such initial distributions, for which the $\d$-discounted values at those distributions, do not decrease with $\d$. Such phenomena, together with the result of Theorem \ref{Thm1}, show that patience may have opposite effects on the sender, based on the initial distribution of the Markov chain. The exact details of those opposing effects is described in detail in Subsection \ref{SubSec Survey Mon. Traj.}, dedicated to the survey of known monotone trajectories of the $\d$-discounted values in $\d$. 

Our second main result, stated in Theorem \ref{Thm3}, utilizes the results on monotone trajectories in $\d$ in the case where $(X_n)_{n\geq 1}$ is ergodic (i.e., irreducible and aperiodic), to provide an upper bound on the rate of convergence of the $\d$-discounted values as $\d$ tends to $1$. This theorem shows, that regardless of the initial distribution of $(X_n)_{n\geq 1}$, the $\d$-discounted value converges to a number, termed the `asymptotic value', at a rate which is faster then $(1-\d)^{\a}$ for every $\a \in (0,1)$.

\textbf{The monotonicity through information design technique.}
The recent results on monotone trajectories of the $\d$-discounted values presented in the current work and that of Lehrer and Shaiderman \cite{MPwSR} were derived based on a new proof technique, which we term `monotonicity through information design'. Such technique consists of generalizing the original Markovian persuasion model, by adding an exogenous independent Bernoulli process, having an effect on the evolution of information in the baseline model. 

The first instance of such technique was showcased in Lehrer and Shaiderman \cite{MPwSR}, where the true state was revealed to the receiver at each stage with a fixed positive probability, called the revelation rate, independently of all other events in the game. In particular, in such a setup, the sender is no longer the informational monopolist. The monotone trajectories found by Lehrer and Shaiderman (e.g., Theorems 2 and 3 in \cite{MPwSR}) were derived by analyzing the asymptotics of the $\d$-discounted values of the generalized game as $\delta$ approaches $1$, and comparing different revelation rates. 

In the current work, the information design used to derive Theorem \ref{Thm1}, is one in which, at any stage, the receiver's memory of past signals from the sender is erased with a fixed positive probability. The exogenous Bernoulli process in that case, is the one that governs the memory erasures along the game. Such information design is accompanied with an introduction of an auxiliary player, which also has the capability to erase the receiver's memory. The result of Theorem \ref{Thm1} is then made possible by embedding the generalized model with memory erasures into a zero-sum stochastic game with signals between the sender and the adversary (see Subsection \ref{Subsec Stochastic Game with Signals}). Thus, the combination of zero-sum stochastic games with signals together with the monotonicity through information design technique can be viewed as a further advancement and development of the original technique used in Lehrer and Shaiderman \cite{MPwSR}.

\textbf{Connection to zero-sum repeated games with incomplete information.}
As it turns out, the results of Theorems \ref{Thm1} and \ref{Thm3}, hold also for the $\d$-discounted values of another model of repeated games. Such model, first introduced and studied by Renault \cite{Renault} forms a dynamic extension of Aumann and Maschler's seminal model of repeated games with incomplete information on one-side \cite{Aumann}. The proof of the parallel  results to those of Theorems \ref{Thm1} and \ref{Thm3}, which are summarized in Theorem \ref{Thm RG-1,3}, are achieved using the above techniques, with appropriate modifications for the current model.

As it pertains to the contribution of Theorem \ref{Thm RG-1,3} to the respective field, we have two remarks. First, Sorin (see Proposition 3.11 in \cite{Sorin}) proved that the $\d$-discounted values of Aumann and Maschler's model decrease with $\d$. Thus, Theorem \ref{Thm RG-1,3} can be viewed as a verification of Sorin's result to the dynamic setup of Renault \cite{Renault}, in the case where the dynamics are stationary. Secondly, Theorem \ref{Thm RG-1,3} shows that under ergodic dynamics, the rate of convergence of the $\d$-discounted values admits a `faster' upper bound compared to the bound known for the Aumann and Maschler model, which is of order $\sqrt{1-\delta}$ (e.g., Corollary V.2.9.\ on p. 223 in \cite{MSZ}). 

Lastly, we leave it open for future research, to study whether the methodology and results of the current work can be utilized to obtain similar results for  models of zero-sum repeated games with incomplete information on both sides. In particular, does there exist similar monotone trajectories and upper bounds for the rates of convergence in the splitting game model of Laraki \cite{Laraki} and in the model of Gensbittel and Renault \cite{Gensbittel}. 

\textbf{Additional related literature.}
In terms of the monotonicity properties of values of repeated games, it is interesting to mention that the phenomena described in the current work need not hold in similar models to the ones considered in the paper. Indeed, under $n$-stage evaluations rather than discounted ones, the $n$-stage values of repeated games with incomplete information on one side with signals need not follow any monotone pattern (see Lehrer \cite{Leh1987} and Yariv \cite{Yariv}). 

In a broader sense, the paper adds to existing results regarding the analytic behavior of the $\d$-discounted values as a function of $\d$, for different game-theoretic models. In Shapley's model of stochastic games (e.g., Shapley \cite{Shapley}), the $\d$-discounted values are known to be a semi-algebraic function of $\d$ (e.g., Bewely and Kohlberg \cite{Kohlberg}). The latter, in particular, implies that the $\d$-discounted values converge as the discount factor approaches $1$. Nevertheless, in the more general model of stochastic games with signals (e.g., Chapter 4.3 in Solan and Ziliotto \cite{SolanZiliotto}), the behavior of the $\d$-discounted values need not be regular in $\delta$. Ziliotto \cite{Bruno} provided an example of a zero-sum repeated game with public signals and perfect observation of the actions, where the $\d$-discounted values do not converge as the discount factor approaches $1$. Such phenomenon, i.e., failure of convergence, was also shown to exist under the framework of zero-sum stochastic game with compact action sets as showcased by Vigeral \cite{Vigeral} and Ziliotto \cite{Bruno}.  Moreover, Sorin and Vigeral \cite{Sorin_Vigeral} study the oscillations of the $\d$-discounted values in the above setups.

\textbf{Organization of the paper.}
Section \ref{Sec-Model} is devoted to the formal description of the Markovian persuasion game.  The main results of the paper and corresponding discussions can be found in Section \ref{Sec-Main Res}. In Section \ref{Sec Toolbox} we develop the technical toolbox necessary for the proofs of the main results. Sections \ref{Sec Toolbox} and \ref{Sec Proof of Thm3} are devoted to the proofs of Theorems \ref{Thm1} and \ref{Thm3}, respectively. In Section \ref{Sec RG proof} we survey the modifications to the preceding proofs required to establish the main results on the model of dynamic zero-sum repeated games with incomplete information on one-side, which are described in Subsection \ref{Subsec Repeated Game}. Lastly, in Appendix \ref{Appendix A} we calculate and show several examples to the main results of the paper.

\section{The Markovian Persuasion Model}\label{Sec-Model}

\subsection{The Utilities of the Sender and the Receiver}

Let $K = \{1,...,k\}$ be a finite set of possible states. The \textit{receiver} is a player equipped with a set of actions $B$. We assume that $B$ is either finite or $B = [0,1] \subset \R$. The utility of the receiver is given by a mapping $W:K\times B \to \R$. The receiver is assumed to maximize his utility based on his belief's distribution over $K$. Formally, given the belief $\xi = (\xi^{\l})_{\l \in K} \in \dk$ the receiver takes an action 
    \begin{align*}
        b(\xi) \in \text{arg\, max}_{b \in B} \sum_{\l \in K} \xi^{\l} \cdot W(\l,b).
    \end{align*}

The sender's utility is described by a mapping $V:K\times B \to \R$. That is, it depends on the true state as well as on the receiver's action. For simplicity we assume that $V(\cdot, \cdot)$ is non-negative. A common assumption in the literature (e.g.\ \cite{Kamenica}) asserts that the receiver `breaks ties' in favor of the sender. That is, for every belief $\xi \in \dk$ it holds that
        \begin{align*}
            \sum_{\l \in K} \xi^{\l} \cdot V(\l,b(\xi)) \geq  \sum_{\l \in K} \xi^{\l} \cdot V(\l,b^*)
        \end{align*}
        for all $b^* \in \text{arg\, max}_{b \in B} \sum_{\l \in K} \xi^{\l} \cdot W(\l,b)$. The action policy $b(\cdot):\dk \to \R$ of the receiver is assumed to be known by the sender. 
        
\bigskip

\subsection{The Dynamics of the True State}

Let $(X_n)_{n \geq 1}$ be a Markov chain over $K$, described by a prior probability $q \in \Delta(K)$ and transition rule given by a stochastic matrix $M$. The receiver knows that the true state of the game evolves stochastically over time according to $(X_n)_{n\geq 1}$. The receiver has incomplete information regarding the evolution of the true state over time, as he does not observe the realizations of $(X_n)_{n \geq 1}$, but rather is only aware of the distribution of $(X_n)_{n \geq 1}$ (i.e., of $q$ and $M$).

\bigskip

\subsection{The Sender's Information Provision}

The \textit{sender} is an agent who seeks to influence the actions of the receiver by filtering his information regarding $(X_n)_{n\geq 1}$ in real time. At each time period $n\geq 1$, the sender observes the current true state (i.e., the realization of $X_n$). Upon obtaining this information the sender sends a signal $s_n$ to the receiver, from a finite set of signals $S$, where $|S| \geq |K|$. 

A \emph{signaling policy} $\sigma$ of the sender is described by a sequence of stage strategies $(\s_n)_{n\geq 1}$, where $\s_n : (K \times S)^{n-1}\times K \to \Delta(S)$. That is, the signal $s_n$ is chosen according to the lottery $\s_n$, whose laws may take into account past events, i.e., the realizations of $X_1,...,X_{n-1}$, previously sent signals $s_1,...,s_{n-1}$, together with `today's information', being the  realization of $X_n$. Denote by $\Sigma$ the space of all signaling policies available to the sender.

\bigskip

\subsection{The Receiver's Beliefs Processes}

At the outset of the game, the sender commits to a signaling policy $\s \in \Sigma$, which is known to the receiver. Such a commitment allows the receiver to assign probabilities to different events along the game. Formally, by Kolmogorov's Extension Theorem, the strategy $\s$ together with $(X_n)_{n \geq 1}$ induce a unique probability measure $P_{q,\s}$ on the space of infinite histories of states and signals $(K \times S)^{\N}$.

The receiver's information regarding the evolution of the true state is described by two stochastic processes, termed the \textit{priors} and \textit{posteriors} processes. Formally, for each $n\geq 1$, the $n$'th prior $q_n = (q_n^{\l})_{\l \in K} \in \dk$, is defined by
\begin{equation*}
q_n^{\l} := P_{q,\s}\left(X_n=\l \,|\, s_1,...,s_{n-1}\right),
\end{equation*}
so that $q_1=q$, whereas the $n$'th posterior $p_n = (p_n^{\l})_{\l \in K} \in \dk$ is given by
\begin{equation*}
p_n^{\l} := P_{q,\s}\left(X_n=\l \,|\, s_1,...,s_{n-1},s_n\right).
\end{equation*} 
In simple terms, $q_n$ describes the receiver's belief on the state of the Markov chain at time $n$ before obtaining the $n$'th signal $s_n$, whereas $p_n$ describes the Bayesian update of $q_n$ after obtaining $s_n$. 

Two important properties showcasing the connection of the processes $(q_n)_{n\geq 1}$ and $(p_n)_{n\geq 1}$ are 
\begin{equation}\label{Martingale}
    E_{q,\s} \, (p_{n}\,|\, q_n) = q_n, \quad \forall n\geq 1,
\end{equation}
and 
\begin{equation}\label{M-shift}
    q_{n+1} = p_n M, \quad \forall n\geq 1.
\end{equation}
Note that \eqref{Martingale} follows from the martingale property, whereas \eqref{M-shift} follows from the Markovian dynamics of $(X_n)_{n\geq 1}$. 

\subsection{The Timeline of the Markovian Persuasion Game}

At the outset of the game, the sender commits to a signaling policy $\s \in \Sigma$. The game is then played in stages, so that at each stage $n\geq 1$ the following events take place in chronological order:
\begin{itemize}
    \item The sender sends the signal $s_n$, which is chosen by the lottery $\s_n$.
    \item The receiver updates his prior belief $q_n$ to his posterior belief $p_n$.
    \item The receiver takes the action $b(p_n)$.
    \item The sender's and the receiver's obtain their $n$'th stage payoffs, which equal $V(X_n,b(p_n))$ and $W(X_n,b(p_n))$, respectively.
    \item The receiver computes his next prior $q_{n+1} = p_n M$, and the game proceeds to stage $n+1$. 
\end{itemize}

\subsection{The Sender's Optimization Problem}

The sender's goal is to find a policy $\s \in \Sigma$ that maximizes her expected $\delta$-discounted sum of payoffs, where $\delta \in [0,1)$ is a fixed discount factor. That is, her goal is to maximize 
    \begin{align*}
     \gamma_{\delta}(q,\s) & := E_{q,\s}\left[(1-\d)\sum_{n=1}^{\infty} \d^{n-1} \cdot V(X_n, b(p_n))\right]\\
     & = E_{q,\s}\left[(1-\d)\sum_{n=1}^{\infty} \d^{n-1}\cdot u(p_n) \right]
    \end{align*}
    over all $\s \in \Sigma$, where $u:\dk \to \R_+$ is given by $u(\xi) := \sum_{\l \in K} \xi^{\l} \cdot V(\l,b(\xi))$. A key property of the function $u$ is that its upper semi-continuous. Such property is guaranteed by the tie-breaking rule of the receiver. Upper semi-continuous functions play a key role in the subsequent analysis due to the fact that they attain their maximum on compact domains. 
    
    Let $v_{\d} (q) := \sup_{\s \in \Sigma} \gamma_{\delta}(q,\s)$ denote the $\d$-discounted value of the Markovian persuasion game with prior $q \in \dk$. Note that as $V(\cdot,\cdot) \geq 0$, we have that $v_{\d}(\cdot)\geq 0$ as well. Note that for $\delta=0$, the optimization problem coincides with the classical Gentzkow and Kamenica \cite{Kamenica} one-shot Bayesian persuasion problem. The solution in that case is described in terms of the concavification of $u$. Formally, set
\begin{align*}
(\text{Cav}\, u)(\xi) := \inf \{h(\xi)\,:\, h:\dk \to \mathbb{R} \text{\, is concave}, u \leq h \}, \quad \forall \xi \in \dk.
\end{align*}
That is, $(\text{Cav}\, u)$ is the smallest concave function that majorizes $u$. Then, Gentzkow and Kamenica \cite{Kamenica} have shown that $v_{0}(q) = \cavu (q)$ for every $q \in \dk$. For the sake of completeness, the steps leading to the former result will be described in detail in Subsection \ref{SubSec Opt Str}.

\section{Main Results}\label{Sec-Main Res}

\subsection{The Effect of Stationary Dynamics}

The first main result of the current work establishes a new monotonicity property of the discounted value $v_{\delta}$ of a Markovian persuasion game, and reads as follows:

\begin{theorem}\label{Thm1}
Consider a Markovian persuasion game. Then, for every invariant distribution $\pi$ of $M$, the trajectory $\delta \mapsto v_{\delta}(\pi)$ is non-increasing on $[0,1)$.
\end{theorem}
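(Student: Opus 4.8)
The plan is to establish the monotonicity via the ``monotonicity through information design'' scheme: I will compare the Markovian persuasion game to an auxiliary game in which, under stationary dynamics, the receiver's recollection of past signals is erased at a fixed rate, and exploit that an erasure resets the receiver's belief to $\pi$. As a first reduction, it suffices to prove $v_{\delta_1}(\pi)\ge v_{\delta_2}(\pi)$ whenever $0<\delta_1<\delta_2<1$, since the boundary case $\delta_1=0$ is immediate: for any $\s\in\Sigma$, the martingale property of the posteriors together with the invariance of $\pi$ yields $E_{\pi,\s}[p_n]=E_{\pi,\s}[q_n]=\pi M^{n-1}=\pi$, so by concavity of $\cavu$ we get $E_{\pi,\s}[u(p_n)]\le\cavu(\pi)$ for every $n$, hence $v_\delta(\pi)\le\cavu(\pi)=v_0(\pi)$ for every $\delta\in[0,1)$.

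Next, for $\rho\in(0,1)$ I would introduce the $\rho$-\emph{erasure game}: the Markovian persuasion game started from $\pi$ in which, independently at each stage, the receiver's memory of all past signals is erased with probability $\rho$; because the chain starts stationary, such an erasure at stage $n$ resets the receiver's belief about $X_n$ to $\pi$. Writing $\bar v_{\d,\rho}$ for the $\d$-discounted value of this game, the recursive characterization of the value gives, with $G:=\text{Cav}\,[(1-\d)u+\d\,\bar v_{\d,\rho}(\cdot M)]$, that $\bar v_{\d,\rho}=(1-\rho)G+\rho\,G(\pi)$ and $\bar v_{\d,\rho}(\pi)=G(\pi)$. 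Substituting this back into the recursion and using $\text{Cav}\,[f+c]=\text{Cav}\,[f]+c$ for constants $c$, one finds that $\bar v_{\d,\rho}(\pi)$ obeys the same fixed-point equation as $v_{\d(1-\rho)}(\pi)$ up to an explicit affine rescaling; solving for the constants yields the ``effective discount factor'' identity
\[
\bar v_{\d,\rho}(\pi)=v_{\d(1-\rho)}(\pi).
\]

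The heart of the argument is then to show that erasing the receiver's memory cannot harm the sender at stationarity, i.e.\ $\bar v_{\d,\rho}(\pi)\ge v_\d(\pi)$. Intuitively this holds because an erasure places the receiver at the belief $\pi$, which is the sender's most favourable position: under stationary dynamics $\cavu(\pi)$ caps every stage payoff and is attainable exactly when the prior is $\pi$. To make this precise I would introduce an auxiliary ``eraser'' player, able to erase the receiver's memory at any stage, and embed the construction into the resulting zero-sum stochastic game with signals between the sender (maximiser) and the eraser (minimiser). Using the recursive structure of such games one checks that $\pi$ is a maximiser of the associated value function, so the eraser gains nothing by resetting the receiver to $\pi$; this identifies the value of the game from $\pi$ with $\bar v_{\d,\rho}(\pi)$, while the sender can guarantee at least $v_\d(\pi)$ in that game by replaying a near-optimal policy of the original $\d$-game after every erasure and evaluating the discounted sum by linearity over the geometric ``fresh-start'' renewal structure it induces. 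Combining this with the identity above: given $0<\delta_1<\delta_2<1$, set $\d:=\delta_2$ and $\rho:=1-\delta_1/\delta_2\in(0,1)$, so $\d(1-\rho)=\delta_1$, and conclude $v_{\delta_1}(\pi)=\bar v_{\delta_2,\rho}(\pi)\ge v_{\delta_2}(\pi)$.

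I expect the third step to be the main obstacle. The effective-discount identity is a clean algebraic consequence of the recursive formula, but the inequality $\bar v_{\d,\rho}(\pi)\ge v_\d(\pi)$ is delicate: an erasure need not benefit the sender from a \emph{non}-stationary belief, so a naive pointwise comparison of value functions is unavailable, and one genuinely needs the zero-sum embedding together with the identification of $\pi$ as the argmax of the associated value — which is precisely where the interplay between stationarity and the information design enters.
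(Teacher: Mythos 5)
Your effective-discount identity $\bar v_{\delta,\rho}(\pi)=v_{\delta(1-\rho)}(\pi)$ is in fact correct, and it is a genuinely different (and cleaner) device than the paper's two-proposition Ces\`aro-game argument: one checks that $\xi\mapsto\tfrac{1-\delta}{1-\delta'}v_{\delta'}(\xi)+\tfrac{\delta\rho}{1-\delta'}v_{\delta'}(\pi)$, with $\delta':=\delta(1-\rho)$, solves the Bellman recursion of the erasure MDP, and the $\delta$-contraction gives uniqueness. So your reduction of the theorem to the single inequality $\bar v_{\delta,\rho}(\pi)\ge v_{\delta}(\pi)$ is sound. Note also that the erasure game is a \emph{single-agent} MDP, so the zero-sum ``eraser'' embedding you invoke is unnecessary here; and the claim that $\pi$ maximizes the value function is neither obviously true nor what drives the bound.

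The genuine gap is in the proof of $\bar v_{\delta,\rho}(\pi)\ge v_{\delta}(\pi)$. ``Replaying a near-optimal policy of the original $\delta$-game after every erasure'' evaluated ``over the geometric fresh-start renewal structure'' means restarting $\sigma_{\delta}^{*}$ from scratch at each erasure; the renewal telescoping then gives $\gamma_{\delta'}(\pi,\sigma^*_\delta)$, not $v_{\delta}(\pi)$, and $\gamma_{\delta'}(\pi,\sigma^*_\delta)\le v_{\delta'}(\pi)=\bar v_{\delta,\rho}(\pi)$ is circular. What you actually need is \emph{memory restoration}, which is exactly the role of the distributions $\mu^*_{i+1}(1-x,\pi)$ in the paper's strategy $\theta^*$: after an erasure at stage $n$, use the countably infinite signal set to split the reset belief $\pi$ into $\mu^*_n(\delta,\pi)$, the law of the $n$'th posterior under $\sigma^*_\delta$ started from $\pi$ (a legal split since $\mu^*_n(\delta,\pi)$ has mean $\pi M^{n-1}=\pi$ by invariance), and then continue with the stationary selection $\chi^*_\delta$. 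An easy induction shows that under this strategy the stage-$n$ posterior in the erasure game has law $\mu^*_n(\delta,\pi)$ \emph{regardless of the erasure history}, so every stage payoff equals $E_{\pi,\sigma^*_\delta}[u(p_n)]$ and the $\delta$-discounted sum is exactly $v_\delta(\pi)$. With this replacement your argument closes: $v_\delta(\pi)\le\bar v_{\delta,\rho}(\pi)=v_{\delta(1-\rho)}(\pi)$, and choosing $\rho=1-\delta_1/\delta_2$ yields the theorem.
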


From a dynamics standpoint, Theorem \ref{Thm1} states that whenever the Markov chain $(X_n)_{n\geq 1}$ is stationary, the $\delta$-discounted value cannot increase when $\delta$ grows. In behavioral terms, this implies that increased patience does not benefit the sender in Markovian persuasion games with stationary dynamics.

The result of Theorem \ref{Thm1} may look intuitive for the following reason. When the patience of the sender increases, i.e., when the discount factor $\d$ grows, the relative weight of future payoffs has a higher effect on the total payoff. Thus, in that case, it seems intuitive that the sender does not benefit from the informational spillover across time. Theorem \ref{Thm1} affirms that intuition when the prior is an invariant distribution. However, as we shall see in Theorem \ref{Thm2} in Subsection \ref{SubSec Survey Mon. Traj.} such intuition does not fit all priors.

\bigskip

\headings{A Glimpse to the Proof Technique of Theorem \ref{Thm1}}

The technique we use to prove Theorem \ref{Thm1} can be coined `monotonicity through information design'. The main idea behind this technique, which was first showcased in Lehrer and Shaiderman \cite{MPwSR}, is to add an information parameter $x \in (0,1)$ to the Markovian persuasion model, and study a corresponding generalized strategic model that depends on $x$. 

The generalized strategic model suited to prove Theorem \ref{Thm1} is a zero-sum stochastic game with signals (see Subsection \ref{Subsec Stochastic Game with Signals}) between the sender and an auxiliary adversary. In this game, the information parameter $x \in (0,1)$ arises by considering a situation in which at each stage $n\geq 1$ the receiver's memory of past observed signals $s_1,...,s_{n-1}$ is erased by chance with probability $x$, independently of all other events along the game. The auxiliary adversary to the sender introduced to the stochastic game is a player who only has recall of his own actions and the current stage period. At each stage, conditional on the receiver's memory not being erased by chance, the adversary possesses the ability to erase it.

The memory erasures suit the current framework, as the stationary dynamics guarantee that the belief of the receiver after any memory erasure will return to the invariant distribution $\pi$ of the Markov chain. Therefore, as from the sender's view point any memory erasure starts the game anew, she can be strategically myopic in every sub-game occurring between any two successive memory erasures. The monotonicity arises from a `coupling' argument; if the adversary decides to erase the receiver's memory at each stage with prob.\ $(y-x)/(1-x)$, for some $x<y<1$, the sender essentially faces a situation where the memory erasures are performed according to an i.i.d.\ Bernoulli process with mean $y$ rather then with mean $x$, as under chance's process. 

The suitability of exogenous i.i.d.\ Bernoulli processes is useful due to their connection to $\delta$-discounted valuations. Indeed, under both chance's process, and the adversary's coupling process, the random duration between any two memory erasures follows a geometric distribution with parameters $x$ and $y$, respectively. As it turns out, the expected sum of payoffs along any such random duration admits a formula in terms of the discounted expected payoff (see Subsection \ref{Subsec Random Duration}). Such a fact is rooted in the known probabilistic interpretation of discounted valuations, in which the total reward of a player equals his payoff of the last stage of the game, where the former is drawn at random from a geometric distribution. The formal details of the proof of Theorem \ref{Thm1} can be found in Section \ref{Sec-Proof of Thm 1}. 

\bigskip

\subsection{A Survey of Monotonic $\delta$-Trajectories}\label{SubSec Survey Mon. Traj.}

The monotonic $\delta$-trajectory $\d \mapsto v_{\d}(\p)$ exhibited in Theorem \ref{Thm1} adds to a family of monotonic $\delta$-trajectories for the Markocian persuasion model showcased in Lehrer and Shaiderman \cite{MPwSR} for an irreducible $M$. However, contrary to the decreasing nature of the $\delta$-trajectories described in Theorem \ref{Thm1}, 
Theorem 3 in \cite{MPwSR} provides the following family of increasing $\delta$-trajectories.

\begin{theorem}[Lehrer and Shaiderman \cite{MPwSR}]\label{Thm2}
Consider a Markovian persuasion game where $M$ is irreducible and let $\p$ be the unique invariant distribution of $M$. Assume that the beliefs $\{\xi_1,...,\xi_k\} \subseteq \Delta(K)$ satisfy 
    \begin{itemize}
        \item[\emph{(a)}]  $\xi_1,...,\xi_k$ are affinely independent.
        \item[\emph{(b)}] $M(\Delta(K)) \subseteq conv\,\{\xi_1,...,\xi_k\}$, i.e., for every $q \in \Delta(K)$, $qM \in conv\,\{\xi_1,...,\xi_k\}$.
    \end{itemize}
    Then, the trajectory $$\delta \mapsto \gamma_1 v_{\delta}(\xi_1) + ... + \gamma_k v_{\delta}(\xi_k),$$ is non-decreasing on $[0,1)$, where $\gamma_1,...,\gamma_k$ are the unique convex weights satisfying
    \begin{equation*}
        \pi_M = \gamma_1 \xi_1 + ... + \gamma_k  \xi_k.
    \end{equation*}
\end{theorem}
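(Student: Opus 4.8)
The plan is to run the ``monotonicity through information design'' argument: I will realize $F(\delta):=\gamma_1 v_\delta(\xi_1)+\cdots+\gamma_k v_\delta(\xi_k)$ as the value of an auxiliary game $\Gamma_x$ depending on a parameter $x\in[0,1)$, so arranged that $F(\delta)$ is the value of $\Gamma_x$ with $x$ and $\delta$ linked by $\delta\mapsto\delta(1-x)$, while the value of $\Gamma_x$ is visibly non-increasing in $x$. Write $\Delta_k:=conv\{\xi_1,\dots,\xi_k\}$. Hypothesis (b) gives $M(\Delta(K))\subseteq\Delta_k$, so $\pi=\pi M\in\Delta_k$ and $pM\in\Delta_k$ for every posterior $p$; hypothesis (a) makes the representation $pM=\sum_{i=1}^k\beta_i(p)\,\xi_i$ as a convex combination unique, with $\beta$ affine in $p$. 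The game $\Gamma_x$ is the Markovian persuasion game with one addition: at each stage $n$, independently with probability $x$, a \emph{label revelation} occurs — a label $I_n$ is drawn from $\beta(p_n)$ and disclosed, and the receiver's prior entering stage $n+1$ is reset from $p_nM$ to $\xi_{I_n}$ (with probability $1-x$, $q_{n+1}=p_nM$ as usual). Since $\sum_i\beta_i(p_n)\xi_i=p_nM$, disclosing $I_n$ is Bayes-plausible, so the receiver's beliefs in $\Gamma_x$ remain genuine posteriors and $\Gamma_x$ is a well-defined sender optimization problem coinciding with the original game at $x=0$. Run $\Gamma_x$ from $\pi$ preceded by a ``stage-$0$'' revelation, i.e.\ draw $I_0$ with $P(I_0=i)=\gamma_i$ (legitimate since $\sum_i\gamma_i\xi_i=\pi$) and start at $\xi_{I_0}$; the resulting value is $V_\delta(x):=\sum_i\gamma_i\,\tilde v_{\delta,x}(\xi_i)$, where $\tilde v_{\delta,x}$ is the value function of $\Gamma_x$, so that $V_\delta(0)=F(\delta)$.

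\noindent\textbf{Step 1: the renewal identity $V_\delta(x)=F(\delta(1-x))$.} The revelation times cut a play into successive blocks, the $j$-th starting at $\xi_{I_{j-1}}$ and having length $L_j$, where $L_1,L_2,\dots$ are i.i.d.\ with $P(L_j=m)=(1-x)^{m-1}x$ and independent of the chain and of the sender's randomizations. First, conditionally on $L_1,\dots,L_{j-1}$ and on any policy, the law of $I_{j-1}$ is exactly $\gamma$: by \eqref{Martingale}--\eqref{M-shift} one shows by induction on $j$ that $E[\xi_{I_{j-1}}\mid L_1,\dots,L_{j-1}]=\pi$ (base case $I_0\sim\gamma$; the step uses $E[p_m\mid q_1]=q_1M^{m-1}$, valid for any policy, together with $\pi M=\pi$), and then affine independence of $\{\xi_i\}$ forces the conditional law of $I_{j-1}$ to be $\gamma$ — in particular $I_{j-1}$ is independent of $T_{j-1}:=L_1+\cdots+L_{j-1}$. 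Second, apply the probabilistic interpretation of discounting inside a block: since $P(L_j\ge m)=(1-x)^{m-1}$, the expected contribution of block $j$ to $\gamma_\delta$, given $\xi_{I_{j-1}}$, $T_{j-1}$ and the within-block policy $\tau$, equals $\tfrac{(1-\delta)\delta^{T_{j-1}}}{1-\delta'}\,\gamma_{\delta'}(\xi_{I_{j-1}},\tau)$ with $\delta':=\delta(1-x)$, hence is $\le\tfrac{(1-\delta)\delta^{T_{j-1}}}{1-\delta'}\,v_{\delta'}(\xi_{I_{j-1}})$, with equality if $\tau$ is $\delta'$-optimal. Taking expectations (using $I_{j-1}\perp T_{j-1}$ and $E[\delta^{L_j}]=\tfrac{x\delta}{1-\delta'}$, whence $\sum_{j\ge1}E[\delta^{T_{j-1}}]=\tfrac{1-\delta'}{1-\delta}$) and summing over $j$ gives $V_\delta(x)\le F(\delta')$ for every policy and $V_\delta(x)\ge F(\delta')$ via block-myopic play, i.e.\ $V_\delta(x)=F(\delta(1-x))$.

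\noindent\textbf{Step 2: monotonicity in $x$, and conclusion.} A label revelation is a disclosure the sender can produce herself, by transmitting (according to $\beta(p_n)$) one of $k\le|S|$ signals; so for $x<y<1$, any policy of the sender in $\Gamma_y$ is reproduced inside $\Gamma_x$ by superposing on chance's rate-$x$ revelations her own independent extra revelations at rate $(y-x)/(1-x)$, for combined rate $y$. Hence $V_\delta(x)\ge V_\delta(y)$, so $x\mapsto V_\delta(x)$ is non-increasing. (Alternatively, as in the proof of Theorem~\ref{Thm1}, embed $\Gamma_x$ in a zero-sum stochastic game in which an auxiliary adversary may force additional revelations.) Now fix $0<\delta_1<\delta_2<1$, pick $\delta\in[\delta_2,1)$, and set $x_i:=1-\delta_i/\delta\in[0,1)$; then $x_1>x_2$, so by Steps 1--2, $F(\delta_1)=V_\delta(x_1)\le V_\delta(x_2)=F(\delta_2)$. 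Thus $F$ is non-decreasing on $(0,1)$, and continuity of $\delta\mapsto v_\delta$ (hence of $F$) at $0$, which is standard, extends this to all of $[0,1)$.

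\noindent\textbf{Main obstacle.} Step~1 is where the work is; Step~2 is the routine ``extra exogenous information cannot benefit the sender'' fact, needing only that the disclosures are of a kind the sender could herself produce. Within Step~1, the delicate point is that every block's starting label is $\gamma$-distributed and independent of the elapsed time $T_{j-1}$: this is exactly where hypothesis (a) enters (uniqueness of barycentric coordinates upgrades ``barycenter $\pi$'' to ``law $\gamma$''), where hypothesis (b) enters (so that $p_nM\in\Delta_k$ and $\beta(p_n)$ exists at all), and where invariance $\pi M=\pi$ enters (making the property self-reproducing from block to block). The second delicate point is the bookkeeping that fuses the exogenous $\mathrm{Geom}(x)$ block lengths with the discount factor $\delta$ into the single within-block discount $\delta(1-x)$, together with the verification that the sender loses nothing by optimizing the $\delta(1-x)$-discounted criterion separately inside each block — which works precisely because, by the first point, the next block's starting label is $\gamma$-distributed no matter how the current block was played, so continuation values decouple from current play in expectation.
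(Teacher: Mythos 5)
The paper cites Theorem \ref{Thm2} from \cite{MPwSR} without giving a proof; the only guidance it offers is the high-level sketch in Subsection \ref{SubSec Survey Mon. Traj.} and its footnote: add an exogenous revelation rate $x$ that ``splits'' $q_n$ to $\{\xi_1,\dots,\xi_k\}$, then couple rates $x<y$ by letting the sender superpose her own revelations at rate $(y-x)/(1-x)$. Your proposal is a correct and complete implementation of precisely that technique. The one visible divergence from the paper's characterization of \cite{MPwSR} (``analyzing the asymptotics of the $\delta$-discounted values as $\delta\to1$'') is that your Step~1 establishes the exact, non-asymptotic renewal identity $V_\delta(x)=F(\delta(1-x))$, which is arguably cleaner and is exactly the ``random duration payoff'' mechanism the present paper itself uses in Claim \ref{Claim Random Duration} for the proof of Theorem \ref{Thm1}; both routes rest on the same decoupling of geometric blocks. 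The two load-bearing steps of your argument are handled correctly and are exactly where the hypotheses enter: the block-starting label $I_{j-1}$ is $\gamma$-distributed independently of $T_{j-1}$ for any within-block policy (via the $M$-martingale property, $\pi M=\pi$, (b) to guarantee $p_nM$ has barycentric coordinates in $\{\xi_i\}$, and (a) to upgrade ``barycenter $\pi$'' to ``law $\gamma$''), and the fusing of the geometric block length with $\delta$ into the within-block discount $\delta(1-x)$.
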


In simple terms, Theorem \ref{Thm2} states that there exists infinitely many families of priors, such that certain averages of their $\d$-discounted values exhibit a  monotonic non-decreasing behavior as $\d$ grows. This result is weaker than that of Theorem \ref{Thm1}, in the sense that it does not allow one to deduce the existence of a non-decreasing monotonic trajectory of the form $\d \mapsto v_{\d} (q)$ for some prior $q \in \dk$. Nevertheless, it implies that there is no range of discount factors in which all the trajectories $\d \mapsto v_{\d}(\xi_1),..., \delta \mapsto v_{\d}(\xi_k)$ decrease simultaneously.

\bigskip

The informational parameter $x \in (0,1)$ added to Markovian persuasion model in order to prove Theorem \ref{Thm2} serves to embody random information revelations. In further detail, for every $n\geq 2$, some information regarding $X_n$ is assumed to be revealed to the receiver at the start of the $n$'th stage with probability $x$.\footnote{In exact terms, at the start of the $n$'th stage, prior to obtaining the $n$'th sender's signal $s_n$, the receiver's prior belief $q_n$ is `split' to the beliefs $\{\xi_1,...,\xi_k\}$. Such a `split' is well defined as by Eq.\ \eqref{M-shift} $q_n \in M(\dk)$ for every $n\geq 2$. Further details can be found in Section 9 in \cite{MPwSR}.} Thus, in comparison to the proof technique of Theorem \ref{Thm1}, we have a situation where the receiver obtains information as opposed to losing all of his past information.

Without elaborating on the different steps required to prove Theorem \ref{Thm2}, we  note that the monotonicity in Theorem \ref{Thm2} builds on a coupling argument as well. The corresponding coupling is done by the sender, who can at each stage reveal to the receiver the corresponding information regarding $X_n$ with prob.\ $(y-x)/(1-x)$, where $x<y<1$, whenever the stochastic revelation failed to do so. This `coupling' is intended to mimic the case where the stochastic revelations occur with probability $y$.

\subsubsection{On the Analytic Behavior of Two Trajectories}

For any irreducible stochastic matrix $M$, a set of priors $\xi_1,...,\xi_k$ satisfying the assumptions of Theorem \ref{Thm2} consists of the Dirac measures on $\dk$, denoted by $\textbf{e}_1,...,\textbf{e}_k$, so that $\textbf{e}_{\l}$ assigns probability mass $1$ to state $\l \in K$. Thus, Theorem \ref{Thm2} implies that the mapping $\Psi:[0,1) \to \R_{+}$ defined by 
\begin{align*}
    \Psi(\d) = \p^1 \cdot v_{\d}(\textbf{e}_1)+\cdots + \p^k \cdot v_{\d}(\textbf{e}_k)
\end{align*}
is non-decreasing on $[0,1)$. Also, let $\Phi: [0,1) \to \R_{+}$ be defined by $\Phi(\delta) := v_{\delta}(\pi_M)$, where we recall that $\pi_M$ denotes the unique invariant distribution of the irreducible stochastic matrix $M$. By Theorem \ref{Thm1} we have that $\Phi$ is non-increasing on $[0,1)$. We shall now exhibit some phenomena relating the analytic behavior of $\Psi$ and $\Phi$.

As we shall showcase in Subsection \ref{SubSec Opt Str}, $v_{\delta}(\cdot):\dk \to \R_{+}$ is concave for every $\delta \in [0,1)$. Therefore, by definition, $\Phi \geq \Psi$ on $[0,1)$. The latter, together with Theorems \ref{Thm1} and \ref{Thm2} implies the following corollary.

\begin{corollary}\label{Cor1}
Assume that there exists some $\delta_{0} \in [0,1)$ such that $\Phi(\delta_0) = \Psi(\delta_0)$. Then, both $\Phi$ and $\Psi$ are constant and equal to $\Phi(\delta_0)$ on the interval $[\delta_0,1)$.
\end{corollary}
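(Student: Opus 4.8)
The plan is to establish the corollary by a short sandwiching argument that combines the three facts already at hand: $\Phi$ is non-increasing on $[0,1)$ (Theorem \ref{Thm1}), $\Psi$ is non-decreasing on $[0,1)$ (Theorem \ref{Thm2}), and $\Phi \geq \Psi$ pointwise on $[0,1)$. The last of these is the only point that requires a word of justification; it follows from the concavity of $v_{\d}(\cdot)$ on $\dk$ (to be established in Subsection \ref{SubSec Opt Str}) together with the defining identity $\p = \p^1 \textbf{e}_1 + \cdots + \p^k \textbf{e}_k$, since concavity then yields $\Phi(\d) = v_{\d}(\p) \geq \p^1 v_{\d}(\textbf{e}_1) + \cdots + \p^k v_{\d}(\textbf{e}_k) = \Psi(\d)$ for every $\d \in [0,1)$.

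Granting this, I would fix an arbitrary $\d \in [\d_0, 1)$ and chain the inequalities: monotonicity of $\Phi$ gives $\Phi(\d) \leq \Phi(\d_0)$, the hypothesis gives $\Phi(\d_0) = \Psi(\d_0)$, monotonicity of $\Psi$ gives $\Psi(\d_0) \leq \Psi(\d)$, and the pointwise bound gives $\Psi(\d) \leq \Phi(\d)$. Concatenating,
\[ \Phi(\d) \leq \Phi(\d_0) = \Psi(\d_0) \leq \Psi(\d) \leq \Phi(\d), \]
so the leftmost and rightmost terms coincide, forcing every inequality in the chain to be an equality. In particular $\Phi(\d) = \Psi(\d) = \Phi(\d_0)$. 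Since $\d \in [\d_0,1)$ was arbitrary, both $\Phi$ and $\Psi$ are constant and equal to $\Phi(\d_0)$ on $[\d_0,1)$, which is exactly the assertion.

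There is essentially no obstacle here beyond being careful that the inputs are correctly invoked; the content of the corollary is entirely carried by Theorems \ref{Thm1} and \ref{Thm2} and by concavity of the discounted value. The one place where I would take care in the write-up is to state the pointwise inequality $\Phi \geq \Psi$ explicitly (with the short concavity argument above) before deploying it, since it is used as a known fact in the text but is logically the crux that turns the two opposite monotonicities into a rigidity statement.
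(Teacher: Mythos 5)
Your proof is correct and is exactly the argument the paper leaves implicit: it invokes the concavity of $v_\delta$ to get $\Phi \geq \Psi$, then sandwiches $\Phi(\delta_0) = \Psi(\delta_0)$ between the non-increasing $\Phi$ and non-decreasing $\Psi$ on $[\delta_0,1)$. You have simply written out the short chain of inequalities that the paper compresses into ``the latter, together with Theorems \ref{Thm1} and \ref{Thm2}, implies the following corollary.''
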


Note that as $\Phi(0) = \cavu (\pi_M)$ and 
\begin{align*}
\Psi(0) & = \pi_M^1\cdot \cavu(\textbf{e}_1)+\cdots+\pi_M^k\cdot \cavu (\textbf{e}_k)\\
& = \pi_M^1\cdot u(\textbf{e}_1)+\cdots+\pi_M^k\cdot u (\textbf{e}_k),
\end{align*} Theorems \ref{Thm1} and \ref{Thm2} together with Corollary \ref{Cor1}, imply that the analytic relation between the graphs of $\Phi$ and $\Psi$ must obey one of the cases depicted in Figure \ref{figure 1}. In Appendix \ref{Appendix A} we will give examples showing that each of the cases is possible. Nevertheless, the existence of an example for Case B in which $0<\d_0<1$ (where $\d_0$ was described in Corollary \ref{Cor1}) remains an open question.

\begin{figure} 
\centering
\includegraphics[scale=0.4]{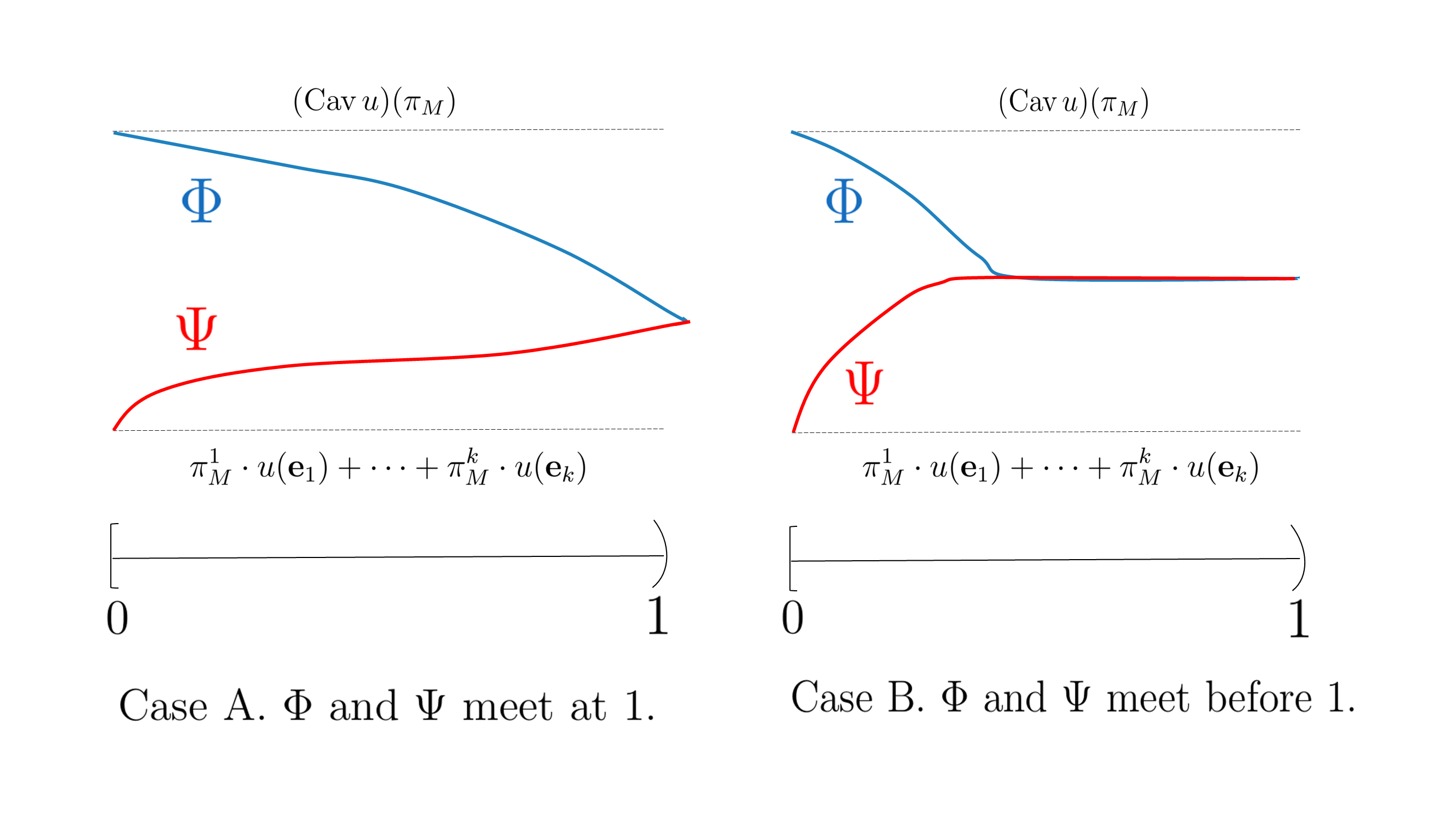}
\includegraphics[scale=0.4]{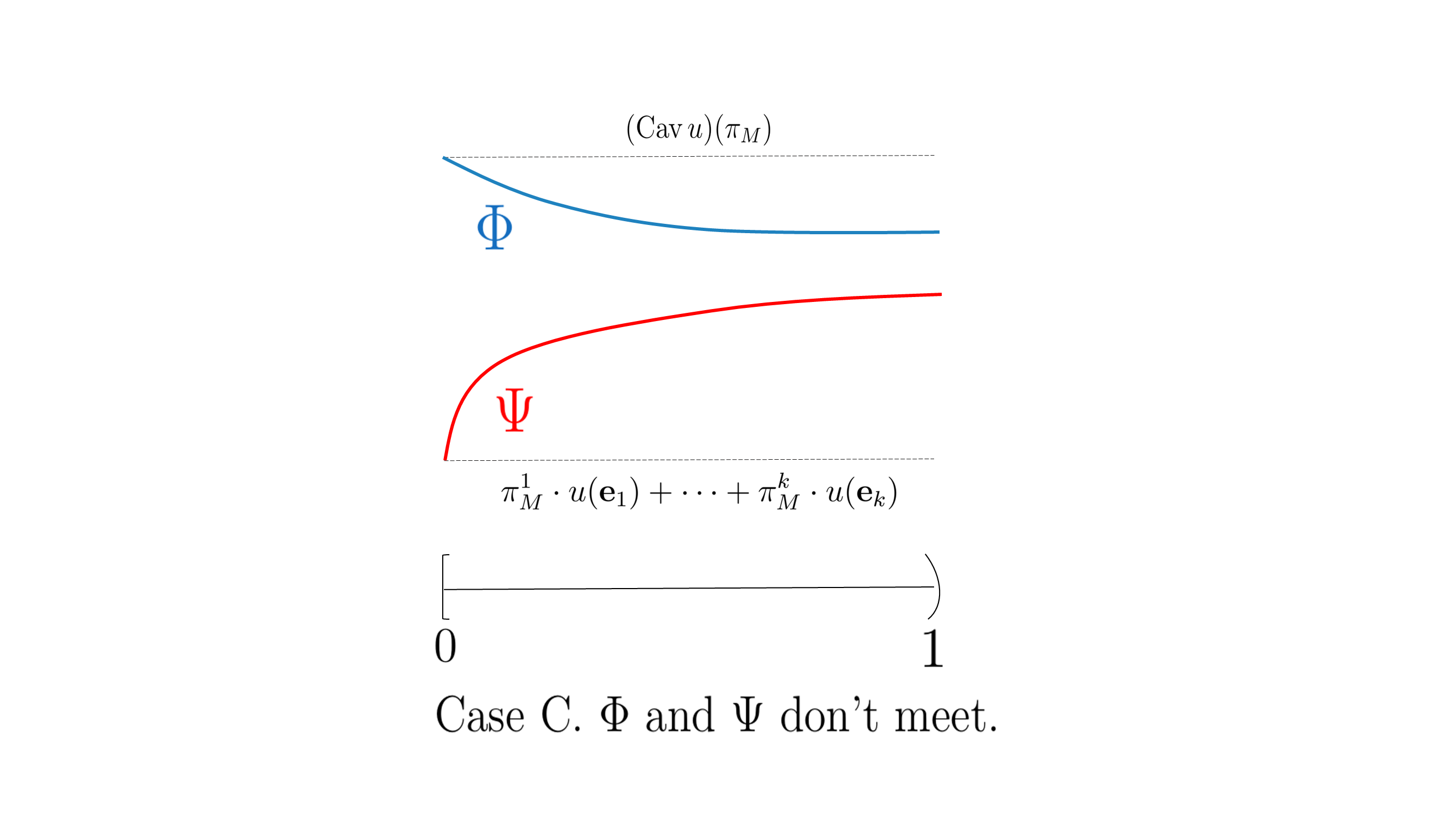}
\caption{The possible relations between the graphs of $\Phi$ and $\Psi$, and the quantities $\cavu (\pi_M)$ and $\pi_M^1 \cdot \cavu (\textbf{m}_1)+\cdots+\pi_M^k \cdot \cavu (\textbf{m}_k)$}
\label{figure 1}
\end{figure}

\subsection{An Upper Bound on the Rate of Convergence to the Asymptotic Value}

Assume that in addition to $M$ being irreducible it is also aperiodic. Put differently, assume that the Markov chain $(X_n)_{n\geq 1}$ is an ergodic one. In Theorem 1 in Lehrer and Shaiderman \cite{MP} it was shown that there exists a number $v_{\infty} \in \R$ such that 
\begin{align}\label{Eq. Uniform Convergence}
    \sup_{\dk} \vert v_{\d}(\cdot) - v_{\infty} \vert \to 0 \quad \text{as} \quad \d \to 1^-. 
\end{align}
That is, the discounted valued $v_{\d}$ converge uniformly on $\dk$ to $v_{\infty}$. The limit $v_{\infty}$ is referred to as the \textit{asymptotic value} of the Markovian persuasion model. 

Our second main result provides a first upper bound on the rate of convergence of $v_{\d}$ to $v_{\infty}$ and reads as follows.

\begin{theorem}\label{Thm3}
 Consider a Markovian persuasion game. Then, for  $1 - (1/2) \cdot \min_{\l \in K} \p^{\l} <\d <1$ it holds that
    \begin{align*}
        \sup_{\dk} \vert v_{\d}(\cdot) - v_{\infty} \vert \leq O\left((1-\d)\log_2 \left(\frac{1}{1-\d}\right)\right). 
    \end{align*}
\end{theorem}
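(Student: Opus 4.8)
The plan is to squeeze $v_\delta$ between the two opposite monotone trajectories that are already available — the non-increasing trajectory $\Phi(\delta)=v_\delta(\pi_M)$ of Theorem~\ref{Thm1} and the non-decreasing trajectory $\Psi(\delta)=\sum_{\ell\in K}\pi_M^\ell\,v_\delta(\textbf{e}_\ell)$ of Theorem~\ref{Thm2} (applied to the Dirac measures $\textbf{e}_1,\dots,\textbf{e}_k$) — and then to upgrade this qualitative squeeze to a quantitative one using the dynamic‑programming recursion for $v_\delta$ together with the mixing of the ergodic chain. Under the standing ergodicity assumption, \eqref{Eq. Uniform Convergence} gives $\Phi(\delta)\to v_\infty$ and $\Psi(\delta)\to v_\infty$ as $\delta\to1^-$, and monotonicity then forces
\[
 \Psi(\delta)\ \le\ v_\infty\ \le\ \Phi(\delta)=v_\delta(\pi_M)\qquad\text{for all }\delta\in[0,1).
\]
Consequently it suffices to establish two estimates: (i) $v_\delta(\pi_M)-\Psi(\delta)=O(1-\delta)$, which already controls $v_\delta$ \emph{at} $\pi_M$; and (ii) $\sup_{\xi\in\dk}\lvert v_\delta(\xi)-v_\delta(\pi_M)\rvert=O\bigl((1-\delta)\log_2(1/(1-\delta))\bigr)$, which propagates that control to every prior. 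Adding the two, with $\|u\|_\infty:=\max_{\dk}u<\infty$, yields the theorem, the logarithm entering only through (ii).

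For (i) I would use the Bellman‑type identity $v_\delta(\xi)=(\text{Cav}\,g_\delta)(\xi)$, $g_\delta(\xi)=(1-\delta)u(\xi)+\delta\,v_\delta(\xi M)$, developed in Section~\ref{Sec Toolbox}. At a vertex $\textbf{e}_\ell$ the only admissible splitting of the belief is the trivial one — the receiver already knows the state — so the identity degenerates to $v_\delta(\textbf{e}_\ell)=(1-\delta)u(\textbf{e}_\ell)+\delta\,v_\delta(\textbf{e}_\ell M)$. Averaging over $\ell$ with weights $\pi_M^\ell$, using concavity of $v_\delta$ (Subsection~\ref{SubSec Opt Str}) and the invariance $\sum_\ell\pi_M^\ell\,(\textbf{e}_\ell M)=\pi_M M=\pi_M$, one gets
\[
 \Psi(\delta)\ \ge\ (1-\delta)\sum_{\ell\in K}\pi_M^\ell\,u(\textbf{e}_\ell)+\delta\,v_\delta(\pi_M),
\]
hence $0\le v_\delta(\pi_M)-\Psi(\delta)\le(1-\delta)\bigl(v_\delta(\pi_M)-\sum_\ell\pi_M^\ell u(\textbf{e}_\ell)\bigr)\le(1-\delta)\|u\|_\infty$. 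Combined with the squeeze this already gives $0\le v_\delta(\pi_M)-v_\infty\le(1-\delta)\|u\|_\infty$ and $0\le v_\infty-\Psi(\delta)\le(1-\delta)\|u\|_\infty$, with no logarithmic loss at the invariant distribution.

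For (ii) I would iterate the recursion in both directions. Since $\xi\mapsto v_\delta(\xi M)$ is concave, $(1-\delta)(\text{Cav}\,u)+\delta\,v_\delta(\cdot M)$ is a concave majorant of $g_\delta$, so $v_\delta\le(1-\delta)(\text{Cav}\,u)+\delta\,v_\delta(\cdot M)$; bounding $(\text{Cav}\,u)\le\|u\|_\infty$ and iterating $t$ times gives $v_\delta(\xi)\le(1-\delta^{\,t})\|u\|_\infty+v_\delta(\xi M^{\,t})$. In the opposite direction $v_\delta\ge g_\delta\ge\delta\,v_\delta(\cdot M)$ (here $u\ge 0$ is used), which iterates to $v_\delta(\xi)\ge\delta^{\,t}v_\delta(\xi M^{\,t})$. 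Now bring in the chain: ergodicity yields $\|\xi M^{\,t}-\pi_M\|\le C\lambda^{\,t}$ with $C\ge1$, $\lambda\in(0,1)$ depending only on $M$; and since $1-\delta<\tfrac12\min_\ell\pi_M^\ell$, the ball of radius $1-\delta$ about the interior point $\pi_M$ stays at distance $>\tfrac12\min_\ell\pi_M^\ell$ from $\partial\dk$, so the concave function $v_\delta$ (bounded by $\|u\|_\infty$, uniformly in $\delta$) is Lipschitz on that ball with a constant $L\le 2\|u\|_\infty/\min_\ell\pi_M^\ell$ independent of $\delta$. Choosing $t=t(\delta)$ to be the least integer with $C\lambda^{\,t}\le1-\delta$ — so $t\asymp\log_2\!\bigl(1/(1-\delta)\bigr)$ — puts $\xi M^{\,t}$ inside the ball, so $\lvert v_\delta(\xi M^{\,t})-v_\delta(\pi_M)\rvert\le L(1-\delta)$, while $1-\delta^{\,t}\le t(1-\delta)$ and $\delta^{\,t}\ge1-t(1-\delta)$. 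Substituting into the two displayed inequalities for $v_\delta(\xi)$ gives $\lvert v_\delta(\xi)-v_\delta(\pi_M)\rvert\le t(1-\delta)\|u\|_\infty+L(1-\delta)=O\bigl((1-\delta)\log_2(1/(1-\delta))\bigr)$ uniformly over $\xi$, and combining with (i) via $\sup_\xi\lvert v_\delta(\xi)-v_\infty\rvert\le\sup_\xi\lvert v_\delta(\xi)-v_\delta(\pi_M)\rvert+\lvert v_\delta(\pi_M)-v_\infty\rvert$ completes the proof; the statement of Theorem~\ref{Thm RG-1,3} for the Markov‑chain‑game model follows by the same scheme, the only model‑specific inputs being the analogues of Theorems~\ref{Thm1}, \ref{Thm2} and of \eqref{Eq. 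Uniform Convergence}.

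The step I expect to be most delicate is the regularity input in (ii): a modulus of continuity for $v_\delta$ near $\pi_M$ that is \emph{uniform in $\delta$}. This is not automatic, because $u$ is merely upper semi‑continuous, so $(\text{Cav}\,u)=v_0$ can fail to be Lipschitz, or even continuous, on $\partial\dk$; it is nonetheless salvageable because $\pi_M$ is interior (irreducibility of $M$) and any concave function bounded by a $\delta$‑independent constant is Lipschitz on each compact subset of the relative interior with a constant governed only by that bound and the distance to $\partial\dk$. The remaining care is bookkeeping the constants in the mixing estimate so that everything is in force precisely in the stated range $1-\delta<\tfrac12\min_\ell\pi_M^\ell$, and confirming that $v_\delta$ does inherit the Bellman identity and concavity used above (both developed in Section~\ref{Sec Toolbox} and Subsection~\ref{SubSec Opt Str}).
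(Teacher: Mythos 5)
Your proof is correct, and its overall architecture matches the paper's: sandwich $v_\infty$ between the non-increasing trajectory $\Phi(\delta)=v_\delta(\pi_M)$ and the non-decreasing trajectory $\Psi(\delta)=\sum_\ell\pi_M^\ell v_\delta(\textbf{e}_\ell)$; then control $\sup_\xi\lvert v_\delta(\xi)-v_\delta(\pi_M)\rvert$ by combining (a) a bound on $\lvert v_\delta(\xi)-v_\delta(\xi M^t)\rvert$ of order $t(1-\delta)$, (b) the geometric mixing of the ergodic chain so that $\xi M^t$ lands in an $\ell_1$-ball of radius $1-\delta$ about $\pi_M$ after $t\asymp\log_2(1/(1-\delta))$ steps, and (c) a $\delta$-uniform local Lipschitz estimate for the concave function $v_\delta$ near the interior point $\pi_M$ with constant $2\|u\|_\infty/\min_\ell\pi_M^\ell$. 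The paper carries out precisely (a)--(c) as Lemma~\ref{Lemma Thm3 - 1}, Corollary~\ref{Corollary 3.1}, Lemma~\ref{Lemma Thm3 - 2}, and Lemma~\ref{Lemma Mixing bound}.

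Where you differ is in two local steps, both of which are simplifications rather than detours. First, to bound $v_\delta(\pi_M)-v_\infty$, the paper also routes through the mixing estimate, obtaining $O((1-\delta)\log_2(1/(1-\delta)))$; you instead observe that the Bellman recursion \eqref{Belmann2} degenerates to an identity at each vertex $\textbf{e}_\ell$ (the only split there is trivial), average with weights $\pi_M^\ell$, and invoke concavity together with $\pi_M M=\pi_M$ to get the cleaner, logarithm-free bound $0\le v_\delta(\pi_M)-\Psi(\delta)\le(1-\delta)\|u\|_\infty$. This is a genuine sharpening at the invariant distribution, though it does not change the final $O((1-\delta)\log_2(1/(1-\delta)))$ rate, which is driven by step (b). Second, for step (a) the paper proves $\lvert v_\delta(\xi)-v_\delta(\xi M)\rvert\le(1-\delta)\|u\|_\infty/\delta$ via a strategy that splits $\xi M$ to $\mu_2^*(\delta,\xi)$ and then plays the optimal stationary policy; you instead iterate the two elementary Bellman inequalities $v_\delta\le(1-\delta)\|u\|_\infty+\delta\,(v_\delta\circ M)$ and $v_\delta\ge\delta\,(v_\delta\circ M)$ to get $\lvert v_\delta(\xi)-v_\delta(\xi M^t)\rvert\le t(1-\delta)\|u\|_\infty$ directly, avoiding the construction of the explicit ``memory-restoring'' strategy and even removing the $1/\delta$ factor. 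Both routes are valid; yours is marginally more elementary and carries slightly better constants, while the paper's construction is the one it reuses later for the Markov-chain-game adaptation in Section~\ref{Sec RG proof}.
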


In particular, Theorem \ref{Thm3} implies that the discounted values $v_{\d}$ converge uniformly to $v_{\infty}$ at a rate of at least $(1-\d)^{\alpha}$ for every $\alpha<1$, as $\delta\to 1^{-}$. The initial step for Theorem \ref{Thm3} builds on the following observation:
\begin{align*}
   0\leq  v_{\d}(\p) - v_{\infty} \leq \Phi(\d) - \Psi(\d), \quad \forall \d \in [0,1).
\end{align*}
Indeed, such observation follows from the definitions on $\Phi$ and $\Psi$, Theorems \ref{Thm1} and Theorem \ref{Thm2}, and the identity in \eqref{Eq. Uniform Convergence}. Thereafter, using an appropriate analysis given in Section \ref{Sec Proof of Thm3}, the upper bound in Theorem \ref{Thm3} is enabled by a classic upper bound on the mixing time of ergodic Markov chains due to Levin and Peres \cite{Peres}.

\subsection{Application to Zero-Sum Repeated Games with Incomplete Information on One-Side}\label{Subsec Repeated Game}

As it turns out, the proof techniques developed for the main results in the Markovian persuasion model may be modified to obtain new results to a different model of repeated games. The latter, which was first introduced by Renault \cite{Renault}, sets a dynamic extension of the classic Aumann and Maschler \cite{Aumann} model of repeated games with incomplete information on one-side. 

The model under consideration is described as follows. The set of states $K = \{1,...,k\}$ is associated with a set of zero-sum matrix games, denoted $\{G^{1},...,G^k\}$. The matrices $\{G^{\l}: \l \in K\}$ are assumed to be of equal dimensions. The set of actions of the row player (the maximizer) in those games, is denoted by $I$, and that of the column player (the minimizer) by $J$. The payoff at state $\l \in K$ under the action pair $(i,j) \in I \times J$ equals to $G^{\l}(i,j)$. As in the Markovian persuasion case, we assume that all payoffs are non-negative, i.e., $G^{\l}(i,j)\geq 0$ for every $\l\in K$, $(i,j)\in I \times J$.

The row and column players, called Players 1 and 2, respectively, engage in a repeated game, whose state evolves over time according to a Markov chain $(X_n)_{n\geq 1}$ over $K$ with prior $q \in \dk$ and stochastic transition matrix $M$. The play proceeds by stages; at each stage $n\geq 1$ the following events take place in chronological order:
\begin{itemize}
    \item Player 1 is told the state at stage $n$, being the realized value $x_n$ of $X_n$, unlike Player 2 who only knows $(q,M)$; Player 1 thus learns the zero-sum matrix game $G^{x_n}$ to be played at that stage. 
    \item The players are prescribed to choose simultaneously their $n$'th stage actions $i_n \in I$ and $j_n \in J$.
    \item The pair of actions $(i_n,j_n)$ is publicly announced and becomes common knowledge among the players.
    \item Player 1's payoff for stage $n$ equals to $G^{x_n}(i_n,j_n)$. Note that this payoff is known only to Player 1.
    \item The game proceeds to stage $n+1$.
\end{itemize}

The above description is assumed to be common knowledge among the players. Adopting Renault's terminology \cite{Renault}, the above described game is called a \textit{Markov chain game}. The Aumann and Maschler \cite{Aumann} model is an instance of a Markov chain game where $M = \text{Id}_k$. In such case, the matrix game chosen at the first stage is played repeatedly at every subsequent stage. 

The statement of the optimization problem faced by both players requires the definition of \textit{behavioral strategies}. A \textit{behavioral strategy $\a$ of Player 1} consists of a sequence of mappings $(\a_n)_{n\geq 1}$, where $\a_n : (K \times I \times J)^{n-1}\times K \to \Delta(I)$. That is, under the behavioral strategy $\a$, the $n$'th action $i_n$ of Player 1 is drawn at random from the lottery $\a_n (x_1,i_1,j_1,...,x_{n-1},i_{n-1},j_{n-1},x_n)$. In particular, it may depend on all information available to her at the time. Denote by $\mathcal{A}$ the set of all behavioral strategies of Player 1. Similarly, a \textit{behavioral strategy $\b$ of Player 2} is described by the sequence of mappings $\b_n : (I \times J)^{n-1} \to \Delta(J)$, $n\geq 1$. As is the case for Player 1, Player 2's $n$'th action $j_n$ may take into account his available information at the time: the past pairs of chosen actions $(i_1,j_1,...,i_{n-1},j_{n-1})$. Denote by $\mathcal{B}$ the set of all behavioral strategies of Player 2.

For every pair of behavioral strategies $(\a,\b) \in \mathcal{A}\times \mathcal{B}$ denote by $P^q_{\a,\b}$ the unique probability measure over the space of infinite histories $(K\times I \times J)^{\N}$ along the game.\footnote{We equip $(K\times I \times J)^{\N}$ with the product topology, and thereafter consider the Borel sigma-field on that space. The probability measure $P^q_{\a,\b}$ is defined on events in the Borel sigma-field over $(K\times I \times J)^{\N}$.}  The optimization problem is to find for every discount factor $\d \in [0,1)$ the $\d$-discounted value of the Markov chain game, denoted $V_{\d}(q)$, and defined by:
\begin{align}\label{RG minimax relation}
    V_{\d}(q) & := \max_{\a \in \mathcal{A}}\, \min_{\b \in \mathcal{B}} E^q_{\a,\b}\left[(1-\d)\sum_{n=1}^{\infty} \d^{n-1} \cdot G^{X_n}(i_n, j_n)\right] \nonumber \\
    & = \min_{\b \in \mathcal{B}}\, \max_{\a \in \mathcal{A}} E^q_{\a,\b}\left[(1-\d)\sum_{n=1}^{\infty} \d^{n-1} \cdot G^{X_n}(i_n, j_n)\right],
\end{align}
where the second equality is due to Sion's minimax theorem, and $E^q_{\a,\b}$ stands for the expectation operator with respect to $P^q_{\a,\b}$.

A common underlying assumption when dealing with zero-sum repeated games is that of \textit{pre-play communication}. Such assumption asserts that at every stage $n\geq 1$, prior to choosing their respective actions $(i_n,j_n)$ the players may communicate with each other. This communication is unrestricted, allowing players to send any message they desire. This assumption is not-restrictive in the sense that the value $V_{\d}$ is independent of it. Indeed, the pre-play communication can only add to Player 2's information, and thereby help him minimize Player 1's payoff. As common in the literature, the possibility of pre-play communication is not added to the formal definition of behavioral strategies, and thus we keep the former in above simplified version. 

Lehrer and Shaiderman \cite{MPwSR} have shown that the result of Theorem \ref{Thm2} may also be achieved for Markov chain games. Formally, the statement reads as follows.

\begin{theorem}\label{Thm RG-2}
Consider a Markov chain game. Assume that $M$ is irreducible and let $\p$ be the unique invariant distribution of $M$. Assume that the beliefs $\{\xi_1,...,\xi_k\} \subseteq \Delta(K)$ satisfy
    \begin{itemize}
        \item[\emph{(a)}]  $\xi_1,...,\xi_k$ are affinely independent.
        \item[\emph{(b)}] $M(\Delta(K)) \subseteq conv\, \{\xi_1,...,\xi_k\}$, i.e., for every $\x \in \Delta(K)$, $\x M \in conv\, \{\xi_1,...,\xi_k\}$.
    \end{itemize}
    Then, the trajectory $$\delta \mapsto \gamma_1 V_{\delta}(\chi_1) + ... + \gamma_k V_{\delta}(\chi_k),$$ is non-decreasing on $[0,1)$, where $\gamma_1,...,\gamma_k$ are the unique convex weights satisfying
    \begin{equation*}
        \pi_M = \gamma_1 \chi_1 + ... + \gamma_k  \chi_k.
    \end{equation*}    
\end{theorem}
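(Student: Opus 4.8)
The plan is to carry over to the Markov chain game the ``monotonicity through information design'' argument behind Theorem~\ref{Thm2}, the only structural change being that the informed player's splittings are now implemented via pre-play communication (which is available and, as noted, does not affect $V_\delta$) rather than through private signals. Throughout write $\chi_i:=\xi_i$ (so the hypotheses and the conclusion concern the same beliefs), and recall that, since $\pi=\pi M\in M(\Delta(K))\subseteq conv\,\{\xi_1,\dots,\xi_k\}$, the convex weights $\gamma=(\gamma_1,\dots,\gamma_k)$ with $\pi=\sum_i\gamma_i\xi_i$ exist and, by affine independence, are unique. Put $h(\delta):=\sum_i\gamma_i V_\delta(\xi_i)$; the goal is to show $h$ is non-decreasing on $[0,1)$. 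First I would introduce, for each $x\in(0,1)$, the $x$-revelation Markov chain game $\Gamma_x$: it is played exactly as the Markov chain game, except that at the start of every stage $n\ge 2$, independently of everything else, with probability $x$ a public revelation occurs in which Player~2's current prior $q_n$ --- which lies in $M(\Delta(K))\subseteq conv\,\{\xi_1,\dots,\xi_k\}$ by the analogue of \eqref{M-shift} --- is split into $\{\xi_1,\dots,\xi_k\}$ along its (unique) convex coordinates, coupled with the state so beliefs stay consistent. Since Player~1 remains fully informed and the revelation is public, $\Gamma_x$ retains a $\delta$-discounted value $V_\delta^x(\cdot)$ by the same Sion-theorem argument that gives \eqref{RG minimax relation}; moreover $V_\delta^0=V_\delta$, and if $\widehat V_\delta^x$ denotes the value of $\Gamma_x$ preceded by a public draw of $\xi_i\sim\gamma$, then $\widehat V_\delta^x=\sum_i\gamma_i V_\delta^x(\xi_i)$ and $\widehat V_\delta^0=h(\delta)$.

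The core step is the renewal identity $\widehat V_\delta^x=h\big((1-x)\delta\big)$. To prove it I would decompose a play of $\Gamma_x$ started from the $\gamma$-draw into epochs separated by the revelation times. The first epoch has length $L_1$ that is geometric with parameter $x$ and independent of the play, and conditional on $L_1=m$ it is an $m$-stage block of the base Markov chain game started from the $\gamma$-draw. Two facts make the decomposition close: (i) Player~2's belief process satisfies the strategy-independent relation $E[q_n]=(\text{initial belief})\,M^{n-1}$ (from the martingale property of the posteriors and $q_{n+1}=p_nM$, cf.\ \eqref{Martingale}--\eqref{M-shift}), so conditional on $L_1=m$ the belief reached at the first revelation has expectation $\pi M^{m}=\pi$, and affine independence then forces the expected convex coordinates of that belief to equal $\gamma$ --- hence the expected continuation value right after the first revelation equals $\widehat V_\delta^x$, no matter how the first epoch was played by either player; and (ii) by the ``geometric duration equals discount factor'' identity (cf.\ the Random-Duration analysis), summing the stage payoffs over the geometric block $\{1,\dots,L_1\}$ and normalizing produces exactly $\tfrac{1-\delta}{1-(1-x)\delta}$ times the $(1-x)\delta$-discounted value of the base game started from the $\gamma$-draw, i.e.\ $\tfrac{1-\delta}{1-(1-x)\delta}\,h((1-x)\delta)$. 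Splitting the $\delta$-discounted total at $L_1$ and using $E[\delta^{L_1}]=\tfrac{x\delta}{1-(1-x)\delta}$ gives $\widehat V_\delta^x=\tfrac{1-\delta}{1-(1-x)\delta}h((1-x)\delta)+\tfrac{x\delta}{1-(1-x)\delta}\widehat V_\delta^x$; since the two coefficients sum to $1$ and the first is positive, solving yields $\widehat V_\delta^x=h((1-x)\delta)$.

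Next I would establish that $x\mapsto V_\delta^x(q)$ is non-increasing, via the coupling that drives Theorem~\ref{Thm2}. Given $0<x<y<1$, in $\Gamma_x$ Player~1 can, whenever the stage-$n$ revelation coin fails, herself perform the same split of $q_n$ with probability $(y-x)/(1-x)$ using pre-play communication; the per-stage revelation probability then becomes $y$, so Player~1 reproduces the information structure of $\Gamma_y$ and may follow a $\Gamma_y$-optimal strategy, whence $V_\delta^x(q)\ge V_\delta^y(q)$ for every $q$, and in particular $\widehat V_\delta^x\ge\widehat V_\delta^y$. Combined with the renewal identity, this says $x\mapsto h((1-x)\delta)$ is non-increasing on $(0,1)$. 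Finally, to conclude: given $0\le\delta_1'<\delta_2'<\delta<1$, set $x_i:=1-\delta_i'/\delta\in(0,1)$, so $x_1>x_2$ and $h(\delta_1')=h((1-x_1)\delta)\le h((1-x_2)\delta)=h(\delta_2')$; letting $\delta$ range over $(\delta_2',1)$, and handling $\delta_1'=0$ by continuity of $h$ (or by $x_1\to1^-$), shows $h$ is non-decreasing on $[0,1)$, which is precisely the asserted statement with the convex weights $\gamma_1,\dots,\gamma_k$.

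The main obstacle I expect is making the renewal step rigorous at the level of the \emph{value} rather than of a single player's guarantee. One must check that $\Gamma_x$ is a bona fide zero-sum game with a value (it is a stochastic game with signals, but Player~1 stays fully informed and the extra signal is public, so the minimax still holds), and, more delicately, that the epoch decomposition commutes with the max--min: the supremum over Player~1's first-epoch strategies and the infimum over Player~2's must be interchangeable with the geometric-block decomposition, and the discount-normalization bookkeeping must be done exactly. What makes both the lower and the upper guarantee go through is the strategy-independence of $E[q_n]$ together with the affine independence of $\{\xi_1,\dots,\xi_k\}$, which pins the expected post-revelation continuation value to $\widehat V_\delta^x$ irrespective of how the epoch preceding it was played.
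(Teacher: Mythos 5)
Your proposal correctly reconstructs the ``monotonicity through information design'' argument that the paper attributes (Theorem~\ref{Thm2} and its Markov-chain-game analogue, this Theorem~\ref{Thm RG-2}) to Lehrer and Shaiderman \cite{MPwSR}: the auxiliary $x$-revelation game where $q_n$ is publicly split onto $\{\xi_1,\dots,\xi_k\}$ (legitimate since $q_n\in M(\Delta(K))\subseteq conv\{\xi_1,\dots,\xi_k\}$ for $n\ge2$), the coupling in which Player~1 performs the supplementary splits with probability $(y-x)/(1-x)$, and the pinning of the post-revelation continuation value via the $M$-martingale identity $E[q_n]=\pi M^{n-1}=\pi$ together with affine independence of the $\xi_i$. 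The one stylistic difference is that you package the reduction as an exact renewal identity $\widehat V_\delta^x=h((1-x)\delta)$ valid for every $\delta$, whereas the paper summarizes the cited proof as ``analyzing the asymptotics of the $\delta$-discounted values of the generalized game as $\delta\to1$''; your exact identity subsumes the asymptotic statement and lets you conclude without a limit argument, so this is a harmless (arguably cleaner) reformulation of the same route, and your flagged obstacle --- commuting the geometric-epoch decomposition with the max--min --- is exactly the technical point that needs care and is resolved, as you note, by the strategy-independence of the expected post-revelation weights.
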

 
In the current work, we show that the equivalent results of Theorems \ref{Thm1} and \ref{Thm3} also hold for Markov chain games. Formally, our result may be summarized as follows:

\begin{theorem}\label{Thm RG-1,3}
Consider a Markov chain game. We have the following:
    \begin{itemize}
        \item[1.]  For every invariant distribution $\pi$ of $M$, the trajectory $\delta \mapsto V_{\delta}(\pi)$ is non-increasing on $[0,1)$.
        \item[2.] Assume that $M$ is ergodic (i.e., irreducible and aperiodic). Then, there exists a number $V_{\infty} \geq 0$ such that for  $1 - (1/2) \cdot \min_{\l \in K} \p^{\l} <\d <1$ it holds that
    \begin{align*}
        \sup_{\dk} \vert V_{\d}(\cdot) - V_{\infty} \vert \leq O\left((1-\d)\log_2 \left(\frac{1}{1-\d}\right)\right). 
    \end{align*}
    \end{itemize}
\end{theorem}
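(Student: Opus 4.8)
The plan is to derive the two assertions by transcribing the proofs of Theorems~\ref{Thm1} and~\ref{Thm3} into the setting of Markov chain games, adapting the toolbox rather than reproving the shared combinatorics. The first thing to install is the Renault-model analogue of the basic apparatus used for Markovian persuasion: for every $\delta$ the map $q\mapsto V_\delta(q)$ is concave and bounded by $\max_{\l,i,j}G^{\l}(i,j)$, hence Lipschitz with a $\delta$-independent constant on every compact subset of the relative interior of $\Delta(K)$; Player~1 has an optimal maximin strategy by Sion's theorem; and $V_\delta$ obeys the Shapley--Mertens--Zamir recursion consisting of a one-stage splitting of Player~2's belief followed by the push-forward $p\mapsto pM$, with $V_0(\cdot)$ the concavification of the one-shot value function. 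All of this is classical for repeated games with incomplete information on one side and can be quoted from \cite{Aumann,MSZ} with the Markovian modification as in \cite{Renault}; this is the content of the counterpart of Subsection~\ref{SubSec Opt Str}.

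For Part~1, set up, for each $x\in(0,1)$, the auxiliary zero-sum stochastic game with signals $\Gamma_x$ (in the spirit of Subsection~\ref{Subsec Stochastic Game with Signals} and Section~\ref{Sec-Proof of Thm 1}): the Markov chain game is played from an invariant distribution $\pi$, at each stage chance erases Player~2's recollection of the past action pairs with probability $x$ independently of all other events, and an auxiliary adversary --- a minimizer against Player~1 who remembers only his own past moves and the stage index --- may, conditionally on chance not erasing, erase Player~2's memory himself. Since $\pi M=\pi$, after any erasure Player~2's belief on the current state is again $\pi$, so between two consecutive erasures Player~1 faces a fresh copy of the Markov chain game from $\pi$ of geometric duration; the random-duration identity of Subsection~\ref{Subsec Random Duration} --- the expected sum of stage payoffs over a geometric block with parameter $x$ equals $x^{-1}$ times the corresponding $(1-x)$-discounted payoff --- then yields that the value of $\Gamma_x$ equals $V_{1-x}(\pi)$: a passive adversary leaves the rate-$x$ restart game from $\pi$, whose value is $V_{1-x}(\pi)$, whereas, playing her $(1-x)$-optimal strategy afresh on each block, Player~1 secures $V_{1-x}(\pi)$ against every adversary, since replacing the continuation of a block by a fresh restart merely resets Player~2's belief from a refinement of the current marginal back to $\pi$, which --- by Jensen's inequality applied to the concave continuation value against the fact that the marginal averages to $\pi$ --- cannot lower Player~1's payoff. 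The monotonicity is now the coupling argument: fix $x<y<1$; if in $\Gamma_x$ the adversary commits to erasing, conditionally on chance not erasing, with probability $(y-x)/(1-x)$, then the total erasure process becomes i.i.d.\ Bernoulli with mean $y$ and the adversary has no further decisions, so $\Gamma_x$ has degenerated into the rate-$y$ restart game from $\pi$, whose value is $V_{1-y}(\pi)$; since Player~1 secures $V_{1-x}(\pi)$ against this adversary as well, $V_{1-x}(\pi)\le V_{1-y}(\pi)$, i.e., writing $\delta=1-y<1-x=\delta'$, $V_{\delta'}(\pi)\le V_{\delta}(\pi)$ --- this is Part~1.

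For Part~2, record first that $V_\infty:=\lim_{\delta\to1^-}V_\delta$ exists with $\sup_{\Delta(K)}\lvert V_\delta(\cdot)-V_\infty\rvert\to0$ for ergodic $M$ --- the Markov-chain-game counterpart of \eqref{Eq. Uniform Convergence}, obtained exactly as in \cite{MP} (alternatively from \cite{Renault}) --- and note $V_\infty\ge0$ because $V_\delta\ge0$. Set $\Phi(\delta):=V_\delta(\pi_M)$, which is non-increasing by Part~1, and $\Psi(\delta):=\sum_{\l\in K}\pi_M^{\l}\,V_\delta(\textbf{e}_\l)$, which is non-decreasing by Theorem~\ref{Thm RG-2} applied to the affinely independent family $\textbf{e}_1,\dots,\textbf{e}_k$ --- for which condition~(b) is automatic since $M(\Delta(K))\subseteq\Delta(K)=conv\,\{\textbf{e}_1,\dots,\textbf{e}_k\}$. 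Concavity of $V_\delta$ gives $\Psi\le\Phi$, and both converge to $V_\infty$, so $0\le V_\delta(\pi_M)-V_\infty\le\Phi(\delta)-\Psi(\delta)$ with $\Phi-\Psi$ non-increasing to $0$. The quantitative estimate $\Phi(\delta)-\Psi(\delta)=O\!\left((1-\delta)\log_2\!\left(\frac{1}{1-\delta}\right)\right)$ on the range $1-\tfrac12\min_{\l}\pi^{\l}<\delta<1$, together with the passage from the prior $\pi_M$ to an arbitrary prior $q$ --- using that $qM^{t}$ is within $O(1-\delta)$ of $\pi_M$ in total variation once $t$ is of order $\log_2(1/(1-\delta))$, so that the $\delta$-independent Lipschitz bound for $V_\delta$ near the interior point $\pi_M$ together with the recursion closes the estimate --- then repeats verbatim the argument of Section~\ref{Sec Proof of Thm3}, with $(v_\delta,v_\infty,\Phi,\Psi)$ replaced by $(V_\delta,V_\infty)$ and their analogues; the sole external ingredient is Levin and Peres's \cite{Peres} mixing-time bound for an ergodic chain, which supplies the $\log_2$ factor and the threshold $1-\tfrac12\min_\l\pi^{\l}$ after optimizing $t$.

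I expect the main obstacle to be the first paragraph's toolbox, and within it the precise sense in which erasing the uninformed player's memory cannot harm the informed player, on which the secure-value claim ``Player~1 guarantees $V_{1-x}(\pi)$ against every adversary'' rests. In the persuasion model this is delivered by concavification of the upper semi-continuous $u$; here Player~2 is a genuine strategic opponent, so Sion's theorem and the Mertens--Zamir value operator replace plain concavification, and one must check that the embedding of the memory-erasure model into a zero-sum stochastic game with signals, and the block-wise guarantee, survive the presence of an optimizing Player~2. Once that is secured, the coupling argument and the mixing-time estimate of Parts~1 and~2 are essentially formal transcriptions of the persuasion proofs.
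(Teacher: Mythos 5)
Your overall architecture --- MDP reformulation, the auxiliary game $\Gamma_x$ with chance erasures and an adversary, the coupling $\tau^y$ for the upper bound, and the $\Phi/\Psi$ sandwich with the Levin--Peres mixing bound for Part~2 --- coincides with the paper's. You also correctly anticipate the main difficulty; unfortunately the argument you offer to close it does not.

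The gap is precisely in the ``Player~1 secures $V_{1-x}(\pi)$ against every adversary'' step. You propose that after an adversary erasure Player~1 simply restarts her $(1-x)$-optimal stationary strategy from $\pi$, and you justify this with a Jensen-type inequality: resetting Player~2's belief from a realized posterior $p$ (whose marginal averages to $\pi$) back to $\pi$ cannot lower the concave continuation value. But this compares a fresh restart to a hypothetical ``continuation from $p$,'' and the latter is exactly what Player~1 \emph{cannot} achieve after the adversary erases, unless she can re-establish Player~2's belief at $p$. The paper resolves this with a \emph{memory restoration} strategy: when the state is $\pi^{**}$, Player~1 first splits $\pi^{**}$ to $\hat{\mu}^{*}_{l+1}(1-x,\pi)$ --- the distribution of Player~2's stage-$(l{+}1)$ prior under the optimal stationary strategy --- and then continues as if no erasure had occurred, yielding a block payoff of exactly $V_{1-x}(\pi)/x$ (Claim~\ref{Claim prop1 revised}). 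In the Markov chain game this split cannot be carried by Player~1's actions alone (her stage action is a point of $\Delta(I)^K$, not an arbitrary signal), which is why the paper invokes \emph{pre-play communication} --- explicitly highlighted in Subsection~\ref{Subsec:Gx.Modification} as the device making the restoration possible. You do not identify this mechanism, and without it the block-wise guarantee you assert is unsupported: the ``fresh-restart'' lower bound requires, at minimum, a careful argument about the non-negativity of pre-erasure payoffs together with the memorylessness of the chance process (not Jensen), and even then the adaptive adversary forces you to condition jointly on the chance and adversary processes --- a measurability point your write-up glosses over.

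The same omission recurs in Part~2. The estimate $\lvert V_\delta(\xi)-V_\delta(\xi M)\rvert\leq (1-\delta)\lVert G\rVert_\infty/\delta$ --- the Markov-chain-game analogue of Lemma~\ref{Lemma Thm3 - 1}, which you invoke as ``the recursion closes the estimate'' --- again needs Player~1 to split the prior $\xi M$ to $\hat{\mu}^*_2(\delta,\xi)$ before acting, which the paper achieves with pre-play communication in the definition of $\alpha^*$. Without flagging and justifying that device, both the securing step of Part~1 and the local regularity step of Part~2 remain open, which is exactly the obstacle you yourself flagged at the end of your proposal but left unresolved.
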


The first item of Theorem \ref{Thm RG-1,3} can be viewed as an extension of a parallel known result for Aumann and Maschler's model \cite{Aumann}. Such model is a private case of the model considered above where $M = \text{Id}_k$ (i.e., the Markov chain is constant over-time). For such choice of $M$, every belief $\x \in \dk$ is an invariant distribution of $M = \text{Id}_k$. Sorin has shown that in the Aumann and Maschler model $\x \mapsto V_{\delta}(\x)$ is non-increasing on $[0,1)$ for every $\x \in \dk$ (see Proposition 3.11 on p.\ 34 in Sorin \cite{Sorin}). Sorin's proof builds on an algebraic identity, connecting payoffs under any two disjoint discount factors. For the sake of completeness, we present Sorin's proof in Appendix \ref{Appendix B}. Such proof is of particular interest, as it can be directly applied to deduce the first item in Theorem \ref{Thm RG-1,3} as well. 

The second item in Theorem \ref{Thm RG-1,3} provides a new insight on the effect of the ergodicity of $M$ on the rate of convergence of the discounted values $V_{\d}(\cdot)$. In the classic Aumann and Maschler model it was shown that $V_{\d}(\cdot)$ converges to $(\text{Cav}\, U)(\cdot)$ uniformly over $\dk$ at a rate of at least $(1-\d)^{1/2}$, where $U : \dk \to \R$ is the so called `value of the one shot non-revealing game', defined by:
\begin{align*}
    U(\x) := \text{val} \left( \sum_{\l \in K} \x^{\l} G^{\l} \right), \quad  \forall \x \in \dk.
\end{align*}
However, by the second item of Theorem \ref{Thm RG-1,3} we have that under the ergodicity assumption $V_{\d}(\cdot)$ converges uniformly on $\dk$ at a rate of at least $(1-\d)^{\a}$ for every $0<\a < 1$. Such a result suggests that the mixing properties of the stochastic transition matrix $M$ may affect the rate of convergence of the discounted valuations. The tightness of the above mentioned bounds on the rate of convergence remains an open problem. It is our hope that the techniques and tools of the current research may advance the work on this problem. 

The proof of Theorem \ref{Thm RG-1,3} which appears in Section \ref{Sec RG proof}, is achieved by surveying the required modifications to the proofs of the main results in regards to the Markovian persuasion model. Before proceeding to such proofs, we move to the development of a suitable toolbox for the Markovian persuasion model. Such toolbox will be necessary for all subsequent proofs in the current work.

\section{The Toolbox}\label{Sec Toolbox}

\subsection{A Markov Decision Problem Reformulation}\label{Subsec MDP reformulation}

As first noted by Ely \cite{Ely} and Renault, Solan, and Vieille \cite{Solan}, the Markovian persuasion model may be reformulated in terms of a Markov decision process (MDP) having the following attributes:
\begin{itemize}
    \item The set of \textit{states} is $\dk$. The initial state is denoted $q$. 
    \item At each state $\xi \in \dk$ the \textit{set of available actions at} $\xi$, denoted $\mathcal{A}_{\xi}$, is defined by 
    \begin{align*}
        \mathcal{A}_{\xi} = \Bigg\{ (\a_s,\xi_s)_{s\in S}\,:\, \a_s\geq0,\, \x_s \in \dk, ~ \forall s\in S, ~ \sum_{s \in S} \a_s =1, ~ \sum_{s\in S} \a_s \xi_s = \xi   \Bigg\}.
    \end{align*}
    In words, $\mathcal{A}_{\xi}$ describes all discrete distributions over $|S|$ beliefs in $\dk$ whose mean equals $\xi$, also known as the set of \textit{splits} of $\xi$ to $|S|$ beliefs in $\dk$.
    \item The \textit{transition rule} $\rho$ associates with each state $\xi \in \dk$ and action $\chi = (\a_s,\xi_s)_{s\in S} \in \mathcal{A}_{\xi}$ a new state, denoted  $\rho (\xi,\chi)$, picked stochastically by the law
    \begin{align*}
        \rho (\xi,\chi) = \xi_sM~~ \text{with probability}~~ \a_s.
    \end{align*}
    \item The \textit{reward function} $r$ assigns to each state $\xi$ and action $\chi = (\a_s,\xi_s)_{s\in S} \in \mathcal{A}_{\xi}$, the number $r(\xi,\chi)$ defined by
    \begin{align*}
        r(\xi,\chi) := \sum_{s \in S} \a_s \cdot u(\xi_s).
    \end{align*}
\end{itemize}

For the sake of completeness, let us briefly discuss the validity of the above reformulation, albeit that it was previously addressed in detail in works such as \cite{Ely},\cite{Solan}, and \cite{MP}. Such validity is rooted in the Aumann and Maschler Splitting Lemma (e.g., Aumann and Maschler \cite{Aumann}). Consider a strategy $\s = (\s_n)_{n\geq 1} \in \Sigma$, and note that for each $n\geq 1$, $q_n$'s distribution depends on $\s_1,...,\s_{n-1}$, but not on $\s_n,\s_{n+1},...$~. Such Lemma implies that given that $q_n=\x$, for any $\chi \in \mathcal{A}_{\x}$ the sender can construct the lottery $\s_n$ so that $p_n$ would be distributed according to $\chi$.

On the other hand, as relation \eqref{Martingale} implies that the conditional distribution of $p_n$ given $q_n=\x$ lies in $\mathcal{A}_{\x}$, we have that at each stage $n$ the sender should choose which `split' of $q_n$ to employ. Therefore, in essence, the prior $q_n$ corresponds to the state of MDP at stage $n$, whereas the conditional distribution of the posterior $p_n$ given $q_n$, corresponds to the $n$'th action in the MDP. 

Under such a correspondence, the definition of the transition rule $\rho$ of the MDP follows immediately from relation \eqref{M-shift}, whereas the reward function $r$ was defined so that the payoff at stage $n$ of the MDP, corresponds to the conditional expectation of $u(p_n)$ given $q_n$.

Abusing notation, a \textit{pure strategy} $\s$ of the sender in the MDP is thus described by a sequence of stage strategies $\s_n$, so that $\s_n: (\x_1,\chi_1,...\x_{n-1},\chi_{n-1},\x_n) \mapsto \chi_n \in \A_{\x_n}$, where $\x_m$ and $\chi_m$ denote the $m$'th state and action, respectively, along the play of the MDP. Thus $\Sigma$ described the set of all pure strategies $\s$ in the MDP. A \textit{mixed strategy} in the MDP is will be defined to be a finite distribution over $\Sigma$, i.e., a random choice of a strategy among a finite set of pure strategies. 

\subsection{Recursive Formula and Optimal Strategies}\label{SubSec Opt Str}

The MDP reformulation above gives rise for a recursive formula for  $v_{\d}(\cdot)$. By applying the dynamic programming principle (e.g., Theorem 1.22 in \cite{SolanB}) we obtain that for any $\xi \in \dk$ it holds\footnote{Note that in \eqref{Belmann}, we have restricted ourselves to maximization over pure strategies rather than mixed ones. This is without a loss of generality due to linearity arguments.}
\begin{align}\label{Belmann}
v_{\delta}(\xi)  = \sup_{\chi = \{(\a_s,\x_s)\} \in \mathcal{A_{\xi}}} \bigg\{ (1-\delta)\sum_{s \in S} \a_s \cdot u (\x_s) + \delta \sum_{s \in S} \a_s \cdot v_{\delta}(\x_s M) \bigg\}.
\end{align}
Let us denote by $(v_{\d} \circ M) : \dk \to \R_+$ the mapping defined by $(v_{\d}\circ M)(q) := v_{\d}(qM)$. Then, we may rewrite Eq.\ \eqref{Belmann} as follows:
\begin{align}\label{Belmann2}
v_{\delta}(\xi)  = \sup_{\chi = \{(\a_s,\x_s)\} \in \mathcal{A_{\xi}}} \sum_{s \in S} \a_s \cdot \bigg\{ (1-\d)\cdot u + \d \cdot ( v_{\d} \circ M )  \bigg\} (\x_s).
\end{align}

\subsubsection{On the Structure of Optimal Strategies}\label{Sub-sub sec Functional Equation}

Carath\'{e}odory's Theorem (see, e.g., Corollary 17.1.5 in \cite{Rock}) implies that for any signal set $S$ with $|S| \geq k$ the right hand-side in Eq.\ \eqref{Belmann2} equals to $(\text{Cav} f_{\d})$, where $f_{\d}: \dk \to \R_+$ is defined by $f_{\d}(\xi) :=  (1-\d)\cdot u (\x) + \d \cdot ( v_{\d} \circ M)(\x)$ for $\x \in \dk$.\footnote{In particular for $\d = 0$, we have that $(\text{Cav} f_{\d}) = \cavu$, which in turn with Eq.\ \eqref{Belmann2} implies that $v_{0}(\xi) = \cavu(\xi)$, recovering the result of Gentzkow and Kamenica \cite{Kamenica}.} In conjunction with \eqref{Belmann2} this implies that $v_{\delta} = (\text{Cav} f_{\d})$. Hence, $v_{\d}$ is concave on $\dk$, and thus so is $(v_{\d} \circ M)$. As any concave function on $\dk$ must be continuous, the upper semi-continuity of $u$ implies that $f_{\d}$ is upper semi-continuous as well.

The upper semi-continuity of $f_{\d}$ is of particular importance, as together with Carath\'{e}odory's Theorem (see, e.g., Corollary 17.1.5 in \cite{Rock}) it implies that for any $\xi \in \dk$ there exists an optimal split $\chi_{\d}^*(\xi) \in \mathcal{A_{\xi}}$  such that the supremum on the right hand-side of Eq.\ \eqref{Belmann2} is achieved at $\chi = \chi_{\d}^*(\xi)$. Moreover, by Carath\'{e}odory's Theorem we may take $\chi_{\d}^*(\xi)$ to supported on at most $k$ beliefs. Therefore, an optimal strategy in the Markovian persuasion model $\s_{\d}^* \in \Sigma$ can be taken to follow the stationary strategy (in the MDP) $\chi_{\d}^* :\dk \to \bigcup_{\x \in \dk} \mathcal{A}_{\xi}$ that assigns to any belief $\xi \in \dk$ the split $\chi_{\d}^*(\xi)$. That is, $\s_{\d}^* \in \Sigma$ is defined such that at every stage $n\geq 1$, if $q_n =\xi$, then $\s_{\d}^*$ splits $q_n$ to $\chi_{\d}^*(\xi)$. 

\subsubsection{The Signal's Cardinality Invariance Principle}\label{SICP property}
By the above discussion and structure of $\s_{\d}^*$ it follows that $v_{\d}(\cdot)$ does not depend on $S$ as long as $|S|\geq k$. In particular, the sender does not gain from additional signals beyond $k$ signals. Also, from here on forward we assume without loss of generality that $S = \N$.

\subsection{$M$-Martingale Property}\label{Subsec Optimal Distributions}
By combining the results of Eqs.\ \eqref{Martingale} and \eqref{M-shift} we obtain that for any signaling policy $\s \in \Sigma$ and prior $q \in \dk$ the corresponding posteriors sequence $(p_n)_{n\geq 1}$ satisfies the so called `$M$-martingale property':
\begin{align*}
    E_{q,\s} (p_{n+1}\,|\, p_n) = p_n M, \quad \forall n\geq 1.
\end{align*}
In particular, using an iterative argument, at the basis of which we have $E_{q,\s}\, p_1 = q$, we obtain that:
\begin{align}\label{Mean of M-Mart.}
    E_{q,\s}\, p_n = qM^{n-1}, \quad \forall \s \in \Sigma,~ \forall n\geq 1.
\end{align}
This relation is of particular interest for $q =\pi$, where $\pi$ is any invariant distribution of $M$. Indeed, it implies that the posteriors sequence $(p_n)_{n\geq 1}$ has fixed mean: $E_{\pi,\s} p_n = \pi$, for every $\s \in \Sigma$ and every $n\geq 1$.

\headings{The distributions of optimal posteriors.} For every $q \in \dk$ and $n\geq 1$, let us denote by $\mu^*_n (\d,q) \in \Delta(\dk)$ the distribution of the $n$'th posterior associated with the prior $q$ and strategy $\s_\d^*$. That is, 
\begin{align*}
  \mu^*_n (\d,q) (E) := P_{q,\s_\d^*}\, (p_n \in E ), \quad  \forall ~ \text{Borel}~ E \subseteq \dk,
\end{align*}
where as before $p^{\l}_n = P_{q,\s_\d^*}\,(X_n = \l\,|\,s_1,...,s_n)$, $\l \in K$. Two important properties of $\mu^*_n (\d,q)$ are the following:
\begin{itemize}
    \item By the definition of $\s_\d^*$, $\mu^*_n (\d,q)$ is supported on at most $k^n$ beliefs.
    \item The mean of the distribution $\mu^*_n (\d,q)$ equals to $qM^{n-1}$, as follows from \eqref{Mean of M-Mart.}.
\end{itemize}
The above two properties may be summarized as follows. For each $q \in \dk$ and $n\geq 1$, $\mu^*_n (\d,q)$ is a split of $qM^{n-1}$ to at most $k^n$ beliefs. That is, $\mu^*_n (\d,q) \in \mathcal{A}_{qM^{n-1}}$.

\subsection{The Random Duration Payoff}\label{Subsec Random Duration}

Consider a scenario in which the number of stages in the Markovian persuasion model is random. That is, the game between the sender and the receiver lasts only a (random) finite number of stages. Furthermore, under such a scenario, let us assume that the sender evaluates his total payoff as the sum of stage payoffs along the game. If we model the random duration by a geometric random variable $Y$ with parameter $x \in (0,1]$, i.e.,
\begin{align*}
    P(Y=n) = (1-x)^{n-1}\cdot x, \quad \forall n\geq 1,
\end{align*}
then the sender's \textit{expected random duration payoff} associated with the signaling policy $\s \in \Sigma$, denoted $\gamma^{\text{r-d}}_x(q,\s)$, is given by the formula
\begin{equation}\label{ran dur pay}
\gamma^{\text{r-d}}_x(q,\s) = E_{q,\s} \left(\sum_{n=1}^{\infty} u(p_n) \cdot \textbf{1}\{n \leq Y\}\right).
\end{equation}
In addition let us assume that $Y$ is independent of all other events along the game. Denote by $v^{\text{r-d}}_x(q):= \sup_{\s \in \Sigma} \gamma^{\text{r-d}}_x(q,\s)$ the `random duration' value. 

The following claim uncovers the relations between the discounted values and random duration values.
\begin{claim}\label{Claim Random Duration}
For every $q \in \dk$, and $x \in (0,1]$, we have
\begin{equation}\label{Eq. disint}
v^{\emph{r-d}}_x(q) = \frac{v_{1-x}(q)}{x}.
\end{equation}
Moreover, the strategy $\s_{1-x}^* \in \Sigma$ defined in Subsection \ref{SubSec Opt Str} is optimal under the expected random duration payoff, that is, $\gamma^{\emph{r-d}}_x(q,\s_{1-x}^*) = v^{\emph{r-d}}_x(q)$.
\end{claim}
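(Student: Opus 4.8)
The plan is to exploit the well-known probabilistic interpretation of discounting: a geometric random variable $Y$ with parameter $x$ satisfies $P(Y \geq n) = (1-x)^{n-1}$ for every $n \geq 1$. First I would fix a signaling policy $\s \in \Sigma$ and a prior $q \in \dk$, and compute $\gamma^{\text{r-d}}_x(q,\s)$ directly from its definition in \eqref{ran dur pay}. Since $Y$ is independent of $(X_n)_{n\geq 1}$ and of the signals, and since the posteriors $(p_n)_{n\geq 1}$ are measurable with respect to the signal history, the indicator $\mathbf{1}\{n \leq Y\}$ is independent of $u(p_n)$ under $P_{q,\s}$. Therefore, by Tonelli's theorem (all terms are non-negative, as $u \geq 0$),
\begin{align*}
\gamma^{\text{r-d}}_x(q,\s) = \sum_{n=1}^{\infty} E_{q,\s}\big[u(p_n)\big]\cdot P(Y \geq n) = \sum_{n=1}^{\infty} (1-x)^{n-1}\, E_{q,\s}\big[u(p_n)\big] = \frac{1}{x}\,\gamma_{1-x}(q,\s),
\end{align*}
where the last equality uses the definition of $\gamma_{\d}(q,\s)$ with $\d = 1-x$ and $1-\d = x$.

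Having established the pointwise identity $\gamma^{\text{r-d}}_x(q,\s) = \gamma_{1-x}(q,\s)/x$ for every $\s \in \Sigma$, taking the supremum over $\s \in \Sigma$ on both sides immediately yields $v^{\text{r-d}}_x(q) = v_{1-x}(q)/x$, which is \eqref{Eq. disint}. The second assertion then follows at once: by Subsection \ref{SubSec Opt Str} the strategy $\s^*_{1-x}$ attains the supremum defining $v_{1-x}(q)$, i.e.\ $\gamma_{1-x}(q,\s^*_{1-x}) = v_{1-x}(q)$; dividing by $x$ and using the pointwise identity gives $\gamma^{\text{r-d}}_x(q,\s^*_{1-x}) = v_{1-x}(q)/x = v^{\text{r-d}}_x(q)$, so $\s^*_{1-x}$ is optimal for the expected random duration payoff as well.

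The only point requiring care — and the one I expect to be the main (if modest) obstacle — is the justification of the interchange of expectation and summation together with the factorization of $E_{q,\s}[u(p_n)\,\mathbf{1}\{n\leq Y\}]$ as $E_{q,\s}[u(p_n)]\cdot P(Y \geq n)$. This rests on modelling $Y$ on an enlarged probability space (product of the history space carrying $P_{q,\s}$ with an independent space carrying $Y$), on the stated independence of $Y$ from all other events, and on the non-negativity of $u$ and of $V$, which licenses Tonelli and rules out $\infty - \infty$ issues; boundedness of $V$ on the finite/compact data also ensures all sums are finite. Once this bookkeeping is in place the computation is purely routine.
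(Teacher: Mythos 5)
Your proposal is correct and, up to a cosmetic reordering (you swap the sum and expectation first and then use independence to produce the factor $P(Y\geq n)=(1-x)^{n-1}$, whereas the paper first disintegrates over the event $\{Y=N\}$ and then swaps the order of the two sums), it is the same computation as the paper's proof, resting on the same ingredients: independence of $Y$ from the signal process, non-negativity of $u$ to license Tonelli, and the pointwise identity $\gamma^{\text{r-d}}_x(q,\s)=\gamma_{1-x}(q,\s)/x$ followed by taking the supremum over $\s$.
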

\begin{proof}[Proof of Claim \ref{Claim Random Duration}.]
Since $Y$ is independent of $(X_n)$ and $(p_n)$, we may disintegrate it from (\ref{ran dur pay}), and then change the order of summation to obtain that for every $\s \in \Sigma$ we have that
\begin{equation*}
\begin{split}
\gamma^{\text{r-d}}_x(q,\s) & = \sum_{N=1}^{\infty} (1-x)^{N-1} \cdot  x \cdot E_{q, \s} \left(\sum_{n=1}^{N} u(p_n)\right)\\
& = \sum_{n=1}^{\infty} \left(\sum_{N = n}^{\infty} (1-x)^{N-1}\cdot x \right)  E_{q,\s}\, u(p_n)\\
& = \frac{1}{x}  \sum_{n=1}^{\infty} x\cdot (1-x)^{n-1}\,\, E_{q,\s} \, u(p_n) = \frac{1}{x}\cdot  \gamma_{1-x}(q,\s).
\end{split}
\end{equation*}
Maximizing over $\s \in \Sigma$ we obtain Eq.\ (\ref{Eq. disint}) holds. This, together with the definition of $\s_{1-x}^*$ implies the optimality of $\s_{1-x}^*$ under the expected random duration payoff, thus completing the proof of the claim. 
\end{proof}

\section{Proof of Theorem \ref{Thm1}}\label{Sec-Proof of Thm 1}

\subsection{A Competitive Stochastic Game with Signals}\label{Subsec Stochastic Game with Signals}

For the sake of the proof of Theorem 
\ref{Thm1} let us concentrate from this point onward on a fixed invariant distribution $\pi$ of $M$. Our starting point for the proof is the introduction of a zero-sum repeated game between the sender and an adversary, which we denote by $\Gamma_x$, where $x$ is a fixed number in $(0,1)$.

\subsubsection{The Primitives of $\G$}

The game $\G$ fits the model of a zero-sum stochastic game with signals (e.g.\ Subsections 1.a and 1.e in Chapter 4 in Mertens, Sorin, and Zamir \cite{MSZ} or Chapter 4.3 in Solan and Ziliotto \cite{SolanZiliotto}). The primitives defining $\G$ are the following:

\begin{itemize}
    \item \textit{States.} The states of $\Gamma_x$ are elements of $\O :=\dk \cup \{\pi^* , \pi^{**}\}$, where $\pi^*$ and $\pi^{**}$ are two distinguished copies of $\pi$. 
    \item \textit{Actions.} At each state $\xi \in \dk$, the available actions of the sender are the splits of $\xi$:
     \begin{align*}
        \mathcal{A}_{\xi} = \Bigg\{ (\a_i,\xi_i)_{i\geq 1}\,:\, \a_i\geq0,\, \x_i \in \dk, ~ \forall i\geq 1, ~ \sum_{i=1}^{\infty} \a_i =1, ~ \sum_{i=1}^{\infty} \a_i \xi_i = \xi   \Bigg\}.
    \end{align*}
    where we recall that the signal set $S=\N$ (see concluding discussion of Subsection \ref{SubSec Opt Str}). For $\xi \in \{\pi^* , \pi^{**}\}$, the available actions are defined to be $\mathcal{A}_{\pi}$.

    As for the adversary, the set of available actions at each state $\xi \in \O$, is  $C = \{0,1\}$. That is, the state of the game has no effect on the actions available to the adversary.

    \item \textit{Payoff function.} The adversary's payoff to the sender is described by a function $f:\O \times \bigcup_{\x \in \O} \A_{\x} \times C \to \R$. The payoff associated with state $\xi \in \O$ and actions $\chi = \{(\a_i,\x_i)\} \in \mathcal{A}_{\x}$, $c \in \{0,1\}$, is given by
    \begin{align*}
      f(\x,\chi,c) := \sum_{i=1}^{\infty} \a_i \cdot u(\x_i).  
    \end{align*}
    In particular, the payoff that the adversary transfers to the sender depends only on the split of $\x$ that the sender chooses to employ.

    \item \textit{Transition rule.} The transition rule is described by a lottery $t:\O\times \bigcup_{\x \in \O} \A_{\x} \times C \to \Delta(\O \times \Lambda_1 \times \Lambda_2 )$, where $\Lambda_1$ is termed the \textit{signal set of the sender} and $\Lambda_2$ the \textit{signal set of the adversary}.\footnote{The set of signals $\Lambda_1$ corresponds to signals that the sender obtains along $\G$, as opposed to the ones she transmits (chosen from the set $S=\N$) to induce splits in $\A_{\x}$, $\x \in \O$. This would be made clear in the following subsection.}  Such a lottery is defined by,
    \begin{align*}\label{transition rule}
t(\x,\chi,0) & = \left\{
       \begin{array}{ll}
        (\pi^*,\pi^*,\emptyset) , & \hbox{with prob.\,\,}\,\, x\\ 
        (\xi_i M, \xi_i M, \emptyset) , & \hbox{with prob.\,\,}\,\, (1-x)\cdot \a_i
       \end{array}
     \right.
     \\ 
     \text{and}
     \\
t(\x,\chi,1) & = \left\{
       \begin{array}{ll}
        (\pi^*,\pi^*,\emptyset) , & \hbox{with prob.\,\,}\,\, x\\ 
        (\pi^{**},\pi^{**},\emptyset) , & \hbox{with prob.\,\,}\,\, (1-x)
       \end{array}
     \right.,
\end{align*}
for every $\x \in \O$ and every $\chi = \{(\a_i,\x_i)\} \in \mathcal{A}_{\x}$. In particular, we may take $\Lambda_1 = \O$ and $\Lambda_2 = \emptyset$.
\end{itemize}

\subsubsection{The Play Description of $\G$}\label{Play of G_x}

The game $\G$ is played by stages. For each stage $n\geq 1$ denote by $\o_n$ the  state at the $n$'th stage of $\G$.
The game starts from state $\o_1 = \pi$, which is common knowledge among the players. At the first stage  the sender and the adversary are prescribed to choose simultaneously actions $\chi_1 \in \A_{\pi}$ and $c_1 \in C$.

The game $\G$ then proceeds iteratively as follows. At any stage $n\geq 2$, given $(\o_{n-1},\chi_{n-1},c_{n-1})$ a tuple $(\zeta_n,\lambda_{n-1}^1,\lambda_{n-1}
^2) \in \O \times \Lambda_1 \times \Lambda_2$ is chosen according to the lottery $t(\o_{n-1},\chi_{n-1},c_{n-1})$, i.e., $\lambda_{n-1}^1=\zeta_n$ and $\lambda_{n-1}
^2 = \emptyset$. Upon this choice, the play at stage $n$ proceeds in the following chronological order:
\begin{itemize}
    \item The new state $\o_n$ is set to equal $\zeta_n$.
    \item The sender observes the signal $\lambda_{n-1}^1 = \o_n$, whereas the adversary observes the signal $\lambda_{n-1}^2 = \emptyset$. 
    \item The sender and the adversary are prescribed to choose simultaneously their respective actions $\chi_{n} \in \A_{\o_{n}}$ and $c_{n} \in C$.
   \end{itemize}
Given $(\o_n, \chi_n, c_n)$ the game $\G$ then proceeds stage $n+1$.

Let us now give some important remarks. By the above description, the sender observes at each stage $n$ the current state $\o_n$, unlike the adversary who obtains no information regarding $(\o_n)_{n\geq 2}$. Moreover, the adversary obtains no information regarding the sender's actions. In contrast, if at some stage $n\geq 2$,  $\o_n = \pi^{**}$, the sender knows that $c_{n-1}=1$, thus getting some information regarding the adversary's past actions. However, it may be the case that $c_{n-1}=1$ and $\o_n = \pi^*$, in which case the sender does not know the choice of $c_{n-1}$.  

We assume that the description of the play and primitives of $\G$ is commonly known among the players. In addition, we assume that the players have perfect recall of the signals they observe, as well as of their own past played actions. 

\subsubsection{The Motivation Behind $\G$}\label{Motivation to G_x}

Broadly speaking, the game $\G$ can be viewed as a generalization of the Markovian persuasion model, where the sender has no full control of the receiver's evolving beliefs. If we view the states $(\o_n)_{n\geq 1}$ in $\G$ as a generalized version of the prior beliefs $(q_n)_{n\geq1}$, then at the start of each stage of $\G$ chance may erase the receiver's memory at random according to a Bernoulli process with parameter $x$. Upon a memory erasure at stage $n$, the stationarity of $(X_n)_{n\geq 1}$ implies that $q_n$ should return to the invariant distribution $\pi$. For this reason, we model memory erasures of chance in $\G$ via transitions to state $\pi^*$.

The adversary in $\G$ can be thought of as a competitive strategic device that is able to erase the receiver's memory as well. As follows from the definition of $\G$, the adversary's memory erasures affect the game only in the case where chance didn't make a move, i.e., performed a memory erasure. The adversary's erasures are made distinguishable from those of chance, by introducing the state $\pi^{**}$. Indeed, the visits to $\pi^{**}$ may occur only when the adversary chooses to erase the memory of the receiver, i.e., take the action $c=1$. It is worth noting that the adversary in $\G$ plays in the `dark', in the sense that he has recall of his own actions only, unlike the sender who obtains information both on the moves of chance and the adversary.

From a payoff perspective, $\G$ is similar to the Markovian persuasion model in that the sender's stage payoffs are fully at hers control. Indeed, the payoff function $f$ only depends on the splits of the sender, similarly to the stage payoffs $\{u(p_n)\,:\, n\geq 1\}$ in the Markovian persuasion model which are determined by the posterior beliefs $(p_n)_{n\geq 1}$, being splits of $(q_n)_{n\geq 1}$. In that view, compared to the Markovian persuasion model, in $\G$ the sender also has full control of the receiver's posterior beliefs, but by contrast, has only partial control of the receiver's prior beliefs, which are affected by the moves of chance and the adversary.

\subsubsection{Strategies and $N$-Ces\`aro Payoffs}\label{Strategies in G_x}

As follows from the definition and play description of $\G$, prior to choosing hers $n$'th action $\chi_n$, the sender knows $\o_1,\chi_1,...,\o_{n-1},\chi_{n-1}$ together with today's state $\o_n$. Therefore, a \textit{strategy} $\theta$ of the sender in $\G$, is defined by a sequence of stage strategies $(\theta_n)_{n\geq 1}$, where $\theta_n : (\o_1,\chi_1,...,\o_{n-1},\chi_{n-1},\o_n) \mapsto \A_{\o_n}$. Denote by $\Theta$ the set of all strategies of the sender. 

As it pertains to the adversary, the sequence $(c_1,...,c_{n-1})$ and initial state $\o_1 =\pi$ constitute his available information when coming to choose his $n$'th action $c_n$. As $\o_1=\pi$ is fixed, we define a \textit{strategy} $\tau$ for the adversary in $\G$, in terms of a sequence of stage strategies $(\tau_n)_{n\geq 1}$, where $\t_n : (c_1,...,c_{n-1}) \mapsto \Delta(C)$. Denote by $\T$ the set of all strategies of the adversary. 

Note that a strategy of the adversary allows him to \textit{randomize} between actions, whereas that of the sender attaches to any sequence $(\o_1,\chi_1,...,\o_{n-1},\chi_{n-1},\o_n)$ a \textit{deterministic} action. Thus, there is an asymmetry from that point of view. However, for the sake of the proof, such definitions of strategies simplify matters, and help avoid unnecessary technical complications.

After $N$ stages of play of $\G$, the \textit{history of play} is given by the sequence $(\o_1,\chi_1,c_1,..., \o_N, \chi_N, c_N)$. By the description of $\G$, the former sequence includes all information about the first $N$ stages of $\G$. Therefore, the possible histories of play of length $N$ are elements in $H_N :=(\O \times \bigcup_{\x} \A_{\x}\times C)^N$. Let $H := (\O \times \bigcup_{\x} \A_{\x}\times C)^{\N}$, and equip $H$ with the sigma-field $\mathcal{H}$ generated by all subsets of the form $h \times (\O \times \bigcup_{\x} \A_{\x}\times C)^{\N}$, where $h \in H_N$ for some $N$. By Kolmogorov's Extension Theorem each pair of strategies $(\theta,\t) \in \Theta \times \T$ induces a unique probability measure on $(H, \mathcal{H})$, denoted $P^x_{\theta,\t}$, consistent with finite histories of plays, i.e., 
\begin{align*}
    P^x_{\theta,\t}(\o_1,\chi_1,c_1,..., \o_N, \chi_N, c_N) & = \textbf{1}\{\theta_1(\o_1) =\chi_1\} \times \t_1 [c_1] \times \prod_{n=2}^N t(\o_{n-1}, \chi_{n-1},c_n) [\o_n] \\
    & \quad \times \textbf{1}\{ \theta_n(\o_1,\chi_1,...,\o_{n-1},\chi_{n-1},\o_n)=\chi_n \} \\
    & \quad \times \t_n (c_1,...,c_{n-1})[c_n],
\end{align*}
for every history of play $(\o_1,\chi_1,c_1,..., \o_N, \chi_N, c_N) \in H_N$.
\bigskip 

For every $N\geq 1$, the $N$-Ces\`aro expected payoff associated with a pair of strategies $(\theta , \tau) \in \Theta \times \T$, is denoted by $\gamma_N^x (\theta,\t)$, and defined by 
\begin{align*}
   \gamma_N^x (\theta,\t):= E^x_{\theta,\t} \left( \frac{1}{N} \sum_{n=1}^N f(\o_n,\chi_n,c_n)\right).
\end{align*}

\subsection{The Plan of the Proof}\label{Plan of the Proof}

The proof will build on two separate propositions. For the statement of such propositions we introduce the notation $o(1)$  to denote a sequence of numbers $(a_N)_{N\geq 1}$ satisfying $a_N \to 0$ as $N \to \infty$. The propositions read as follows. 

\begin{proposition}\label{prop1}
There exists a strategy $\theta^{*} \in \Theta$  such that for every strategy $\t \in \T$ it holds that
\begin{equation*}
 \gamma_N^x (\theta^*,\t) \geq v_{1-x}(\pi) - o(1). 
\end{equation*}
\end{proposition}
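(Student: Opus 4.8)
The plan is to have the sender play, inside $\G$, the optimal stationary strategy $\chi_{1-x}^*$ of the Markovian persuasion MDP, restarted from $\pi$ after every memory erasure. Concretely, define $\theta^* \in \Theta$ as follows: right after stage $1$ (where $\o_1 = \pi$) or after any stage at which the state jumps to $\pi^*$ or $\pi^{**}$, the sender treats that stage as stage $1$ of a fresh Markovian persuasion game started at $\pi$ and follows $\s_{1-x}^*$; between two consecutive visits to $\{\pi^*,\pi^{**}\}$ the state in $\G$ evolves exactly like the prior process $(q_n)$ under $\s_{1-x}^*$, because on the event $c=0$ (the only event relevant for the actual dynamics when no erasure occurs) the transition $t(\x,\chi,0)$ sends $\xi_i$ to $\xi_i M$ with probability $(1-x)\a_i$, which is precisely the split-then-apply-$M$ dynamics of the MDP, thinned by the factor $1-x$.

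The key point is that $\theta^*$ is defined using only information the sender actually has: she observes each new state $\o_n$, so she always knows when a restart has occurred and what the current belief is; she does not need to know the adversary's action $c_{n-1}$, since whether the jump was caused by chance ($\pi^*$) or the adversary ($\pi^{**}$) is irrelevant — in both cases the continuation belief is $\pi$ and she restarts identically. Now fix any adversary strategy $\t \in \T$. Along the play, partition time into the random blocks delimited by successive visits to $\{\pi^*,\pi^{**}\}$. Within a block, conditionally on its length, the expected sum of stage payoffs $\sum f(\o_n,\chi_n,c_n)$ equals the expected sum $\sum u(p_n)$ of the Markovian persuasion game under $\s_{1-x}^*$ run for that many stages. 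The length of each block is geometric with parameter $\bar{x}_n := x + (1-x)\cdot \t_n[1] \geq x$ (chance erases with prob.\ $x$, and conditional on chance not erasing, the adversary erases with prob.\ $\t_n[1]$); since the block-length parameter is always at least $x$, a coupling/monotonicity argument shows the per-block expected payoff is at least what a geometric-$x$ duration would give, namely, by Claim~\ref{Claim Random Duration}, at least $\gamma^{\text{r-d}}_x(\pi,\s_{1-x}^*)\cdot x = v_{1-x}(\pi)$ per unit time in expectation. Summing over blocks and dividing by $N$, plus a boundary-effect term of order $1/N$ for the incomplete block at the end, yields $\gamma_N^x(\theta^*,\t) \geq v_{1-x}(\pi) - o(1)$.

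I would carry this out in the following order: (i) write down $\theta^*$ precisely via the restart rule and verify it is a valid element of $\Theta$ (measurable, uses only admissible information); (ii) establish the block decomposition of $\sum_{n=1}^N f(\o_n,\chi_n,c_n)$ under $P^x_{\theta^*,\t}$ and identify, block by block, the conditional expected payoff with a random-duration payoff of the Markovian persuasion game started at $\pi$ under $\s_{1-x}^*$; (iii) prove the per-block lower bound: the random duration $Y'$ of a block has $P(Y' \geq n \mid \text{past}) \geq (1-x)^{n-1}$ stochastically, hence $E\big(\sum_{n\le Y'} u(p_n)\big) \geq \gamma^{\text{r-d}}_x(\pi,\s_{1-x}^*) = v_{1-x}(\pi)/x$, using nonnegativity of $u$ and Claim~\ref{Claim Random Duration}; (iv) sum over the (random) number of completed blocks in $[1,N]$, use that the expected number of blocks is about $x N$ (or bound directly via a renewal/Wald-type estimate), and absorb the last partial block into the $o(1)$.

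The main obstacle I expect is step (iii) made rigorous with the adversary's strategy $\t$ allowed to be history-dependent and randomized: the block lengths are then neither independent nor identically distributed, so "geometric with parameter $\ge x$" must be phrased as a conditional stochastic-domination statement given the history up to the start of the block, and one must be careful that the sender's restarted play $\s_{1-x}^*$ is genuinely independent of the future erasure events (which it is, because erasures by chance are i.i.d.\ and independent of everything, and the adversary observes neither the states nor the sender's actions, so his randomization cannot correlate with the within-block posteriors). Handling this cleanly — e.g.\ by conditioning on the adversary's realized action sequence first, so that within-block dynamics become an honest thinned Markovian persuasion game — is where the real work lies; everything else is bookkeeping and an appeal to Claim~\ref{Claim Random Duration}.
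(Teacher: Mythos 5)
Your proposed $\theta^*$ restarts $\s_{1-x}^*$ from $\pi$ after \emph{every} visit to $\{\pi^*,\pi^{**}\}$ and then decomposes the payoff into blocks delimited by all erasures. This is a genuinely different strategy from the paper's, and it does not yield the required bound. The core difficulty is in your step (iii): you write that the block length $Y'$ satisfies $P(Y' \geq n \mid \text{past}) \geq (1-x)^{n-1}$, but the inequality runs the other way. At each stage the block ends with probability $\bar x_n = x + (1-x)\t_n[1] \geq x$, so the block \emph{survives} with probability at most $1-x$, giving $P(Y' \geq n) \leq (1-x)^{n-1}$. Combined with $u \geq 0$, this makes $E\bigl(\sum_{n\leq Y'} u(p_n)\bigr) \leq \gamma^{\mathrm{r\text{-}d}}_x(\pi,\s^*_{1-x})$, i.e.\ the wrong direction: the adversary's extra erasures \emph{shorten} your blocks and can only \emph{lower} the per-block payoff.

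Nor does switching to a per-unit-time accounting rescue the argument. Against an adversary who erases with a fixed rate so that blocks are geometric with parameter $\bar x > x$, your $\theta^*$ earns per stage roughly $\gamma_{1-\bar x}(\pi,\s^*_{1-x}) = \bar x \sum_{n\geq1}(1-\bar x)^{n-1} E\,u(p_n)$, where $(p_n)$ is the posterior process under $\s^*_{1-x}$. Comparing this to $v_{1-x}(\pi) = x\sum_{n\geq1}(1-x)^{n-1} E\,u(p_n)$ requires the map $\delta \mapsto (1-\delta)\sum \delta^{n-1} E\,u(p_n)$ to be non-increasing; that is false in general for a fixed nonnegative sequence (and is, in essence, the monotonicity one is trying to prove, so any appeal to it would be circular). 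So the restart-on-every-erasure strategy does not guarantee $v_{1-x}(\pi) - o(1)$ against all adversary strategies, and your conditioning/coupling program cannot patch this because it is the strategy itself that is inadequate, not the bookkeeping.

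The missing idea is the \emph{memory-restoration} trick the paper uses, which is precisely what the distributions $\mu^*_{i+1}(1-x,\pi)$ and the countable signal set $S=\N$ were introduced for. When the state is $\pi^{**}$ at stage $\k_0\oplus\k_m+i$, the paper's $\theta^*$ does \emph{not} restart; instead it splits $\pi^{**}$ directly to $\mu^*_{i+1}(1-x,\pi)$, the distribution the posterior would have had at that point under $\s^*_{1-x}$ had the adversary not moved, and then continues as if nothing had happened. This renders the adversary's erasures payoff-irrelevant: the only blocks that matter are those delimited by \emph{chance's} erasures at $\pi^*$, which are exactly geometric-$x$ and independent of $\t$, so each such block contributes exactly $v_{1-x}(\pi)/x$ (Claim~\ref{Claim prop1}) and the rest is a clean law-of-large-numbers estimate. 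Without this restoration mechanism, the adversary keeps genuine control over the effective discounting seen by the sender, which is exactly why your bound comes out in the wrong direction.
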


\begin{proposition}\label{prop2}
For every $y \in (x,1]$ there exists a strategy $\t^y \in \T$ such that for every strategy $\theta \in \Theta$ it holds that
\begin{equation*}
 \gamma_N^x (\theta,\t^y) \leq v_{1-y}(\pi) + o(1). 
\end{equation*}
\end{proposition}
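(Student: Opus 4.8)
The plan is to have the adversary use his single binary lever to realize the ``coupling'' sketched in Subsection \ref{Motivation to G_x}. Fix $y \in (x,1]$, set $p := (y-x)/(1-x) \in (0,1]$, and let $\tau^y \in \T$ be the strategy under which the adversary plays $c_n = 1$ with probability $p$ and $c_n = 0$ with probability $1-p$, independently at every stage. Composing the transition rule $t$ with $\tau^y$, one computes that from any state $\omega \in \O$ and any split $\chi = \{(\alpha_i,\xi_i)\}$ employed by the sender, the next state is a ``reset'' state -- lying in $\{\pi^*,\pi^{**}\}$, and hence carrying the belief $\pi$ -- with probability $x + p(1-x) = y$, and equals $\xi_i M$ with probability $(1-p)(1-x)\alpha_i = (1-y)\alpha_i$. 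Thus under $\tau^y$ the play of $\G$ is precisely the Markovian persuasion MDP of Subsection \ref{Subsec MDP reformulation} started at $\pi$, subject to an exogenous i.i.d.\ Bernoulli($y$) memory-erasure process that, independently of everything else, returns the state to (a copy of) $\pi$ at each stage with probability $y$; note that the probability of an erasure at a given stage is exactly $y$, regardless of the current state and of the sender's split.

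Next I would cut the play into the blocks delimited by the erasures. Let $1 = T_0 < T_1 < T_2 < \cdots$, where $T_j$ for $j \ge 1$ is the $j$-th stage $n \ge 2$ with $\omega_n \in \{\pi^*,\pi^{**}\}$; call block $j$ the stage-set $\{T_j,\dots,T_{j+1}-1\}$, of random length $L_j := T_{j+1}-T_j$, and set $B_j := \sum_{n=T_j}^{T_{j+1}-1} f(\omega_n,\chi_n,c_n)$, which is nonnegative since $u \ge 0$. Because the erasure coins are independent of the sender's play, $L_0,L_1,\dots$ are i.i.d.\ with $P(L_j = m) = (1-y)^{m-1} y$. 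Moreover, conditionally on the history $\mathcal{F}_{j-1}$ up to the start of block $j$, the within-block state process is exactly the MDP transition $\rho$ from $\pi$ (the conditional probability of moving to $\xi_i M$ given no erasure is $\alpha_i$), and the deterministic strategy $\theta$, restricted to block $j$, is some pure strategy of the MDP; since the geometric length $L_j$ is independent of that strategy, Claim \ref{Claim Random Duration} (with discount factor $1-y$ and geometric horizon $L_j$) gives
\begin{equation*}
 E^x_{\theta,\tau^y}\big[\, B_j \;\big|\; \mathcal{F}_{j-1} \,\big] \;\le\; v^{\text{r-d}}_{y}(\pi) \;=\; \frac{v_{1-y}(\pi)}{y}.
\end{equation*}
(That the erasure state opening a block is $\pi^*$ or $\pi^{**}$ is immaterial: both carry the action set $\mathcal{A}_\pi$ and identical continuation dynamics, and under the i.i.d.\ strategy $\tau^y$ the information ``the last adversary move was $1$'' revealed by a visit to $\pi^{**}$ is useless.)

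Finally I would sum over blocks. Since $f \ge 0$ and every stage $n \le N$ belongs to a block $j$ with $T_j \le N$, we have $\sum_{n=1}^N f(\omega_n,\chi_n,c_n) \le \sum_{j \ge 0} B_j\,\textbf{1}\{T_j \le N\}$. The event $\{T_j \le N\}$ is measurable with respect to the history up to the start of block $j$, so pulling it through the conditional bound above and using $v_{1-y}(\pi) \ge 0$,
\begin{equation*}
 E^x_{\theta,\tau^y}\Big[\, \sum_{n=1}^N f(\omega_n,\chi_n,c_n) \,\Big] \;\le\; \frac{v_{1-y}(\pi)}{y}\sum_{j\ge 0} P^x_{\theta,\tau^y}(T_j \le N) \;=\; \frac{v_{1-y}(\pi)}{y}\big(1 + E^x_{\theta,\tau^y}[J_N]\big),
\end{equation*}
where $J_N := \max\{ j : T_j \le N\}$. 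As each $\omega_n$ with $n \ge 2$ is an erasure state with probability exactly $y$, we get $E^x_{\theta,\tau^y}[J_N] \le (N-1)y$, whence $\gamma_N^x(\theta,\tau^y) \le v_{1-y}(\pi) + \frac{v_{1-y}(\pi)}{Ny}$. Since $u$ is bounded (being nonnegative and upper semi-continuous on the compact set $\dk$), $v_{1-y}(\pi) < \infty$, so the last term is $O(1/N)$ -- an $o(1)$ that does not depend on $\theta$ -- which is exactly the assertion of the proposition. I expect the delicate point to be the MDP-embedding argument behind the conditional bound on $B_j$: one must check carefully that, after conditioning on a block's past, the within-block dynamics genuinely coincide with the transition $\rho$, that the geometric block length stays independent of the sender's choices, and that the sender gains nothing from distinguishing $\pi^*$ from $\pi^{**}$ -- and then combine these with the routine but fiddly optional-stopping-style bookkeeping needed to pass from the conditional bounds on the $B_j$'s to a bound on $E^x_{\theta,\tau^y}[\sum_{n\le N} f]$ with its random number $J_N$ of blocks.
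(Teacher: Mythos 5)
Your proof is correct, and while it shares the same opening moves with the paper's argument---the same coupling strategy $\t^y$, the same observation that under $\t^y$ the combined erasure process $(Z_n)_{n\geq 2}$ is i.i.d.\ Bernoulli$(y)$, and the same per-block bound $E^x_{\theta,\t^y}[B_j\,|\,\mathcal{F}_{j-1}]\leq v_{1-y}(\pi)/y$ (this is exactly the paper's Claim~\ref{Claim Upper Bound within R-D block}, with the same MDP-embedding justification you flag as the delicate point)---it departs from the paper in the final accounting step. The paper truncates the block sum at $b_N=\lceil (N-1)y+N^{0.75}\rceil$ blocks, controls the probability of the complementary event via Hoeffding's inequality, and assembles the pieces, yielding an error term of order $N^{-1/4}$. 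You instead use the clean Wald-style identity $\sum_{n\leq N}f\leq\sum_{j\geq 0}B_j\,\mathbf{1}\{T_j\leq N\}$, pull the $\mathcal{F}_{T_j}$-measurable indicator $\{T_j\leq N\}$ outside the conditional expectation, and sum using $\sum_j P(T_j\leq N)=1+E[J_N]=1+(N-1)y$. This avoids the concentration inequality and truncation entirely and produces the tighter error term $v_{1-y}(\pi)/(Ny)=O(1/N)$, uniformly in $\theta$. The one place where care is needed (and which you correctly identify) is verifying that the conditional block payoff is indeed bounded by the random-duration value $v^{\text{r-d}}_y(\pi)$: this requires checking that, given the pre-block history, the within-block play is faithfully a play of the MDP from $\pi$ against an exogenous geometric clock of parameter $y$ independent of the sender's choices, and that having seen $\pi^{**}$ versus $\pi^*$ at the block's opening stage gives no usable advantage; these are exactly the points handled in the paper's Claim~\ref{Claim Upper Bound within R-D block} and its discussion of mixed strategies in the MDP, so nothing is missing, but they do deserve to be written out rather than cited as an application of Claim~\ref{Claim Random Duration} alone (which only establishes the formula $v^{\text{r-d}}_y(\pi)=v_{1-y}(\pi)/y$, not the upper bound).
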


Proposition \ref{prop1} and \ref{prop2} imply that for any $0 < x <y \leq 1$ it holds that
\begin{align*}
v_{1-x}(\pi) - o(1) & \leq \sup_{\theta \in \Theta} \inf_{\t \in \T}  \gamma_N^x (\theta,\t)\\
& \leq \inf_{\t \in \T} \sup_{\theta \in \Theta} \,  \gamma_N^x (\theta,\t) \leq v_{1-y}(\pi) + o(1).
\end{align*}
By letting $N \to \infty$ we obtain that $v_{1-x}(\pi) \leq v_{1-y}(\pi)$ for $0 < x <y \leq 1$. Taking $\d = 1-x$, we obtain that $\d \mapsto v_{\d}(\pi)$ is decreasing on $[0,1)$, as desired.

\subsection{Probabilistic framework for the proofs of Propositions \ref{prop1} and \ref{prop2}}\label{Subsec Probabilistic Framework}

Important preparations required for the proofs of Propositions \ref{prop1} and \ref{prop2} consist of setting up a probabilistic framework in $\G$. We define the sequence of variables $(\k_n)_{n \geq 0}$ iteratively by
\begin{align*}
\k_0 & := 1, \nonumber \\
\k_n & := \inf\,\{\, n \geq 1\,:\, \o_{\k_0 + \k_1+ \cdots + \k_{n-1}+n} = \pi^*\, \}, \quad  n \geq 1.
\end{align*}
By the definition of $\G$ we have that for any pair of strategies $(\theta , \tau) \in \Theta \times \T$, the sequence of random variables $(\k_n)_{n\geq 1}$ defined on $(H, \mathcal{H}, P^x_{\theta,\t})$ are i.i.d.\ geometrically with parameter $x$. In words, the sequence $(\k_n)_{n\geq 1}$ counts the number of stages between successive memory erasures performed by chance in $\Gamma_x$.

Next, define for any $N \in \N$ the random integer $T_N$ by
\begin{equation*}
T_N := \max\, \{\, n \in \{0,1,..., N-1\} \, : \, \k_0 + \k_1+ \cdots + \k_n \leq N\, \}.
\end{equation*}
In words, $T_N$ stands for the number of memory erasures performed by chance at the end of the $N$'th stage of $\G$. Note that $T_N$ also admits a representation in terms of a sum of $N-1$ Bernoulli random variables given by
\begin{align}\label{T_N Bernoulli Representation}
    T_N = \textbf{1}\{\o_2 = \pi^*\} + \cdots + \textbf{1}\{\o_{N} = \pi^*\}.
\end{align}

Lastly, for each $N \in \N$, set $\l_N := \k_0+\k_1+\cdots + \k_{T_N}$. The random variable $\l_N$ describes the total number of stages from the start of the game $\G$ up to the final memory erasure performed by chance up to stage $N$. In particular $\l_N \leq N$.

The following decomposition of $\gamma_N^x$ in terms of the probabilistic framework just defined will be useful for the proofs of Propositions \ref{prop1} and \ref{prop2}. For any $N \geq 1$ and $(\theta , \tau) \in \Theta \times \T$ we have

\begin{align}\label{Eq.1}
\gamma_N^x (\theta,\t) & = E^x_{\theta,\t} \left(\frac{1}{N}\sum_{n=1}^{\l_N} f(\o_n,\chi_n,c_n) \right) \quad \quad \quad := A_N (x,\theta,\t)  \\
& \quad \quad \quad + E^x_{\theta,\t} \left(\frac{1}{N}\sum_{n= \l_N +1}^{N} f(\o_n,\chi_n,c_n) \right), \quad \quad \quad := B_N (x,\theta,\t) \nonumber
\end{align}
where we use the convention that $\sum_{n = L+1}^L f(\o_n,\chi_n,c_n)=0$ for any $L \in \N$.

Let us now analyze $B_N (x,\theta,\t)$. By the non-negativity of $u$ and the definition of $f$ we deduce that 
\begin{align}\label{Eq.3}
0 \leq  B_N (x,\theta,\t)  \leq E^x_{\theta,\t} \left( \frac{(N - \l_N)}{N}\, \Vert u \Vert_{\infty} \right).
\end{align}
for any pair of strategies $(\theta , \tau) \in \Theta \times \T$. We need to estimate $E^x_{\theta,\t} \left(N -\l_N\right)$. Let us claim that for every $1 \leq n \leq N$ we have
\begin{equation*}
\{N - \l_N = n\} = \{\o_{N-n}=\pi^*, \o_{t}\neq \pi^*, \,\,\, \forall t \in \{N-n+1,...,N\}\}.
\end{equation*}
Indeed, as $\l_N$ is the last stage among the first $N$ stages of $\G$ at which chance performed a memory erasure, the event $\{N - \l_N = n\}$ states that the last such erasure took place at stage $N-n$.  Therefore, we have for every $(\theta , \tau) \in \Theta \times \T$ that
\begin{align}\label{C analysis}
E^x_{\theta,\t} \left(N -\l_N\right) &= \sum_{n=0}^N n \, P^x_{\theta,\t} \left(\{N - \l_N = n\} \right) \nonumber \\
& = \sum_{n=1}^N n\, P^x_{\theta,\t} \left(\{\o_{N-n}=\pi^*, \o_{t}\neq \pi^*, \,\,\, \forall t \in \{N-n+1,...,N\}\} \right)\\
& = \sum_{n=1}^N n
\, x(1-x)^{n} \leq (1-x) \sum_{n=1}^{\infty} n (1-x)^{n-1}x = \frac{1-x}{x}. \nonumber
\end{align}
Hence, Eq.\ (\ref{C analysis}) in conjunction with Eq.\ (\ref{Eq.3}) implies that for every $(\theta , \tau) \in \Theta \times \T$, and every $N \geq 1$, it holds
\begin{equation}\label{key bound on C}
0 \leq  B_N (x,\theta,\t)  \leq \frac{1-x}{x} \cdot \frac{\Vert u \Vert_{\infty}}{N}.
\end{equation}
Hence, by Eqs.\ (\ref{Eq.1}) and (\ref{key bound on C}) we obtain that for any $N \geq 1$ and $(\theta , \tau) \in \Theta \times \T$ it holds that
\begin{equation}\label{Eq. approx.}
\vert \gamma_N^x (\theta,\t) - A_N (x,\theta,\t) \vert \leq \frac{1-x}{x} \cdot \frac{\Vert u \Vert_{\infty}}{N}.
\end{equation}
With Eq.\ (\ref{Eq. approx.}) in mind, we proceed with the proofs of Propositions \ref{prop1} and \ref{prop2}.

\subsection{Proof of Proposition \ref{prop1}}\label{Subsec Prop 1}

Prior to defining $\theta^* \in \Theta$, we introduce for each integer $m \geq 0$ the notation $\k_0 \oplus \k_m := \sum_{i \leq m} \k_i$. The strategy $\theta^*$ is defined by the following rules. For each integer $m \geq 0$ and integer $0\leq i < \k_{m+1}$ play at stage $\k_0 \oplus \k_m +i$ as follows:
\begin{itemize}
    \item If $\o_{\k_0 \oplus \k_m +i} \neq \pi^{**}$, follow the stationary strategy $\s_{1-x}^*$, i.e., split $\o_{\k_0 \oplus \k_m +i}$ to $\chi_{1-x}^* (\o_{\k_0 \oplus \k_m +i})$, where $\chi_{1-x}^* : \dk \to \bigcup_{\x \in \dk}  \mathcal{A}_{\xi}$ was described in Subsection \ref{SubSec Opt Str}. Note that for $i=0$ it is always the case that $\o_{\k_0 \oplus \k_m +i} = \pi^* \neq \pi^{**}$.
    \item If $\o_{\k_0 \oplus \k_m +i} = \pi^{**}$, split $\o_{\k_0 \oplus \k_m +i}$ to the distribution $\mu^*_{i+1} (1-x,\pi)$ described in Subsection \ref{Subsec Optimal Distributions}.
\end{itemize}

Before proceeding with the proof, let us provide some intuition to $\theta^*$. First, $\theta^*$ views the sub-games consisting of the play along stages $\k_0 \oplus \k_{m},...,\k_0 \oplus \k_{m+1}-1$, $m\geq 0$, as independent games. In words, those are the games occurring between any two successive memory erasures by chance. As $\k_{m+1}$ is distributed geometrically with parameter $x$, the play in the mentioned games is of random duration. Therefore, the sender may try to guarantee the random duration value $v_x^{\text{r-d}} (\pi) = v_{1-x}(\pi)/x$ (described in Subsection \ref{Subsec Random Duration}) by mimicking the behavior of $\s^*_{1-x}$ along each such game.

In the absence of the adversary, the sender could follow $\s_{1-x}^*$ which by Claim  \ref{Claim Random Duration} to guarantee an expected sum of stage payoffs equal to $v_x^{\text{r-d}} (\pi)$. Therefore, whenever the adversary does not make a move, i.e., $\o_{\k_0 \oplus \k_m +l} \neq \pi^{**}$, we let $\theta^*$ follow $\s^*_{1-x}$.
Alternatively, when $\o_{\k_0 \oplus \k_m +l} = \pi^{**}$, $\theta^*$ is meant to `restore the receiver's memory' by splitting $\pi^{**}$ to $\mu^*_{i+1} (1-x,\pi)$, being the distribution of the $(i+1)$'st posterior under $\s_{1-x}^*$. Note that as $\mu^*_{i+1} (1-x,\pi)$ may be supported on $k^{i+1}$ beliefs, the infinite cardinality of the signal set $S =\N$ is indeed required. It allows the sender to do the so called `memory restorations' regardless of the identity of stages at which the adversary makes his moves.

The following claim establishes formally that $\theta^*$ achieves the above mentioned goal.

\begin{claim}\label{Claim prop1}
 For each integer $m \geq 0$ and $\t \in \T$ we have
\begin{equation}\label{Eq. theta* random duration}
E^x_{\theta^{*},\t} \left(\sum_{i = \k_0 \oplus \k_m}^{\k_0 \oplus \k_{m+1} - 1} f(\o_i,\chi_i,c_i)  \right) = \frac{v_{1-x}(\pi)}{x}.
\end{equation}
\end{claim}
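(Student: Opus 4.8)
The plan is to prove Claim \ref{Claim prop1} by conditioning on the value of $\k_{m+1}$ and analyzing what $\theta^*$ does within a single block of stages between two consecutive chance-erasures. Fix $m \geq 0$. On the event $\{\k_{m+1} = N\}$, the block consists of stages $\k_0 \oplus \k_m, \dots, \k_0 \oplus \k_m + N - 1$, the state at the first stage of the block is $\pi^*$ (a copy of $\pi$), and the state at each subsequent stage within the block is \emph{not} $\pi^*$ (by definition of $\k_{m+1}$ as the waiting time for the next $\pi^*$). The key structural observation is that, \emph{regardless of the adversary's actions $c_i$ within the block}, the sequence of posteriors induced by $\theta^*$ along the block has exactly the same distribution as the sequence of posteriors $(p_1, \dots, p_N)$ induced by $\s_{1-x}^*$ started from $\pi$. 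Indeed, at stages where $\o_i \neq \pi^{**}$ the strategy $\theta^*$ literally applies the stationary split $\chi_{1-x}^*$; and at stages where $\o_i = \pi^{**}$ — which happens precisely when the adversary played $c = 1$ at the previous stage while chance did not move — the strategy $\theta^*$ ``restores'' by splitting $\pi^{**}$ to the law $\mu^*_{i+1}(1-x,\pi)$, which by Subsection \ref{Subsec Optimal Distributions} is exactly the distribution of the $(i+1)$'st posterior under $\s_{1-x}^*$. Either way, the marginal law of the split employed at the $(i+1)$'st stage of the block coincides with that of $p_{i+1}$ under $\s_{1-x}^*$, hence so does the expectation of $f(\o_i,\chi_i,c_i) = \sum_j \a_j u(\x_j)$, which equals $E_{\pi,\s_{1-x}^*}\, u(p_{i+1})$.

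Concretely, I would argue as follows. First, I would note that $\k_{m+1}$ is independent of the adversary's strategy $\t$ and is geometric with parameter $x$ on $(H,\mathcal{H},P^x_{\theta^*,\t})$, so that by the tower property
\begin{align*}
E^x_{\theta^{*},\t} \left(\sum_{i = \k_0 \oplus \k_m}^{\k_0 \oplus \k_{m+1} - 1} f(\o_i,\chi_i,c_i)\right) = \sum_{N=1}^{\infty} x(1-x)^{N-1} \cdot E^x_{\theta^{*},\t}\left(\sum_{i=0}^{N-1} f(\o_{\k_0 \oplus \k_m + i},\chi_{\k_0 \oplus \k_m + i},c_{\k_0 \oplus \k_m + i}) \,\middle|\, \k_{m+1}=N\right).
\end{align*}
Then, for the inner conditional expectation, I would show term by term that
\begin{align*}
E^x_{\theta^{*},\t}\left(f(\o_{\k_0 \oplus \k_m + i},\chi_{\k_0 \oplus \k_m + i},c_{\k_0 \oplus \k_m + i}) \,\middle|\, \k_{m+1}=N\right) = E_{\pi,\s_{1-x}^*}\, u(p_{i+1}), \quad 0 \leq i \leq N-1,
\end{align*}
using the two-case definition of $\theta^*$ together with the $M$-martingale bookkeeping of Subsection \ref{Subsec Optimal Distributions}: in the $\o = \pi^{**}$ case the restoration split has law $\mu^*_{i+1}(1-x,\pi)$ by construction, and in the $\o \neq \pi^{**}$ case one checks inductively that conditioning on $\{\k_{m+1}=N\}$ does not disturb the law of the current belief relative to the $\s_{1-x}^*$-dynamics, because the transition of $\o$ away from $\pi^*$ is simply $\xi_j \mapsto \xi_j M$ with the same weights $\a_j$ as in the MDP. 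Summing over $i$ gives the inner expectation equal to $\sum_{i=1}^{N} E_{\pi,\s_{1-x}^*}\, u(p_i) = E_{\pi,\s_{1-x}^*}\left(\sum_{i=1}^N u(p_i)\right)$, which is independent of $\t$.

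Finally, plugging this back and interchanging the (absolutely convergent, since $u$ is bounded) sums exactly as in the proof of Claim \ref{Claim Random Duration} yields
\begin{align*}
E^x_{\theta^{*},\t} \left(\sum_{i = \k_0 \oplus \k_m}^{\k_0 \oplus \k_{m+1} - 1} f(\o_i,\chi_i,c_i)\right) = \sum_{N=1}^{\infty} x(1-x)^{N-1} E_{\pi,\s_{1-x}^*}\left(\sum_{i=1}^N u(p_i)\right) = \gamma^{\text{r-d}}_x(\pi,\s_{1-x}^*) = v^{\text{r-d}}_x(\pi) = \frac{v_{1-x}(\pi)}{x},
\end{align*}
where the last two equalities are Claim \ref{Claim Random Duration}. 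I expect the main obstacle to be the careful verification that conditioning on $\{\k_{m+1}=N\}$ — i.e., on the future event that chance does not erase for exactly $N-1$ more stages — does not bias the law of the beliefs the sender is tracking within the block; the clean way around this is to observe that chance's erasure process $(\k_n)$ is, by the definition of the transition rule $t$, independent of everything the sender controls (her splits and the resulting $\xi_j \mapsto \xi_j M$ transitions) and of the adversary's actions, so the conditioning only reweights block lengths and leaves the within-block belief dynamics intact. Once that independence is pinned down, the rest is the same disintegration computation as in Claim \ref{Claim Random Duration}.
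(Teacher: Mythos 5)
Your proof is correct and takes essentially the same route as the paper: disintegrate the block-sum over the (geometric, parameter $x$) block length, identify the expected payoff at the $i$'th block stage with $\int_{\dk} u\,\mathrm{d}[\mu^*_{i+1}(1-x,\pi)] = E_{\pi,\s^*_{1-x}}\, u(p_{i+1})$ using the two-case definition of $\theta^*$, and then close via the change of summation order from Claim \ref{Claim Random Duration}. The paper's proof is a two-line computation that glosses over exactly the point you flag at the end --- that conditioning on the block length (or, in the paper's version, integrating over it with geometric weights while the splits depend on the evolving state $\o$) does not bias the within-block belief dynamics; your resolution, via independence of chance's erasure coin from both the sender's splits and the adversary's actions, is the correct one and is the missing justification for the paper's first equality.
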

\begin{proof}[Proof of Claim \ref{Claim prop1}.]
By the definition of $\theta^*$, $f$, and the proof details of Claim \ref{Claim Random Duration} we have
\begin{align*}
    E^x_{\theta^{*},\t} \left(\sum_{i = \k_0 \oplus \k_m}^{\k_0 \oplus \k_{m+1} - 1} f(\o_i,\chi_i,c_i) \, \Bigg|\, \k_0 \oplus \k_m \right) & = \frac{1}{x}  \sum_{n=1}^{\infty} x\cdot (1-x)^{n-1} \int_{\dk} u(\x)\, \text{d}[\mu^*_{n} (1-x,\pi)] (\x)\\
    & = \frac{1}{x} \cdot \gamma_{1-x}(\pi, \s^*_{1-x}) = \frac{v_{1-x}(\pi)}{x}.
\end{align*}
The above is sufficient to deduce \eqref{Eq. theta* random duration}, and thus completes the proof. 
\end{proof}

Our goal now will be to estimate $A_N(x, \theta^{*},\t)$ for large values of $N$. To do so, we consider the event $F_N := \{T_N \geq \lceil (N-1)x - N^{0.75} \rceil \}$ and assume that $N$ big enough so that $(N-1)x - N^{0.75} \geq 0$. To simplify the technical presentation let us denote $a_N := \lceil (N-1)x - N^{0.75} \rceil$. Using the non-negativity of $f$, we obtain that for any $\t \in \T$ it holds that
\begin{align}\label{Eq. D deco1}
& A_N(x, \theta^{*},\t) =  E^x_{\theta^*, \t} \left(\frac{1}{N} \sum_{m=0}^{T_N - 1} \left(\sum_{i = \k_0 \oplus \k_m}^{\k_0 \oplus \k_{m+1} - 1} f(\o_i,\chi_i,c_i)\right)\right) \nonumber \\ 
&\geq P^x_{\theta^*,\t} (F_N) E^x_{\theta^*, \t} \left(\frac{1}{N} \sum_{m=0}^{T_N - 1} \left(\sum_{i = \k_0 \oplus \k_m}^{\k_0 \oplus \k_{m+1} - 1} f(\o_i,\chi_i,c_i)\right)\,\Bigg|\, F_N\right) \nonumber \\
& \geq P^x_{\theta^*,\t} (F_N) E^x_{\theta^*, \t} \left(\frac{1}{N} \sum_{m=0}^{ a_N  - 1} \left(\sum_{i = \k_0 \oplus \k_m}^{\k_0 \oplus \k_{m+1} - 1} f(\o_i,\chi_i,c_i)\right)\,\Bigg|\, F_N\right)\\
& \geq E^x_{\theta^*, \t} \left(\frac{1}{N} \sum_{m=0}^{a_N - 1} \left(\sum_{i = \k_0 \oplus \k_m}^{\k_0 \oplus \k_{m+1} - 1} f(\o_i,\chi_i,c_i)\right)\right) 	\quad \quad \quad 	:= I_N^1 (\t)	\nonumber \\
&\quad \quad \quad - P^x_{\theta^*,\t}\, (F^{\text{c}}_N)\, E^x_{\theta^*, \t} \left(\frac{1}{N} \sum_{m=0}^{a_N - 1} \left(\sum_{i = \k_0 \oplus \k_m}^{\k_0 \oplus \k_{m+1} - 1} f(\o_i,\chi_i,c_i)\right)\,\Bigg|\, F^{\text{c}}_N\right)	\quad \quad \quad 	 :=I_N^2 (\t). \nonumber 
\end{align}
The linearity of the expectation operator and Claim \ref{Claim prop1} imply that, for any $\t \in \T$ it holds
\begin{align}\label{Eq. Ug11}
I_N^1 (\t) & = \frac{1}{N} \sum_{m=0}^{a_N - 1 } E^x_{\theta^*, \t} \left(\sum_{i = \k_0 \oplus \k_m}^{\k_0 \oplus \k_{m+1} - 1} f(\o_i,\chi_i,c_i)\right)  = \frac{a_N}{N} \cdot \frac{v_{1-x}(\pi)}{x} \nonumber \\
& \geq \frac{ (N-1)x - N^{0.75} }{N} \cdot \frac{v_{1-x}(\pi)}{x}\\
& \geq \frac{N-1}{N} \cdot v_{1-x}(\pi) - \frac{ v_{1-x}(\pi)}{x N^{0.25}} \geq v_{1-x}(\pi) - \frac{\Vert u \Vert_{\infty}}{N} - \frac{\Vert u \Vert_{\infty}}{x N^{0.25}} \nonumber\\
& \geq v_{1-x}(\pi) - \frac{\Vert u \Vert_{\infty}}{x} \left( \frac{x}{N} + \frac{1}{N^{0.25}} \right) \geq v_{1-x}(\pi) - \frac{\Vert u \Vert_{\infty}}{x} \cdot \frac{2} {N^{0.25}}\nonumber.
\end{align}

Next, let us analyze $I_N^2 (\t)$. First, we have for every $\t \in \T$
\begin{multline}\label{Eq. Ug0}
E^x_{\theta^*, \t} \left(\frac{1}{N} \sum_{m=0}^{a_N - 1} \left(\sum_{i = \k_0 \oplus \k_m}^{\k_0 \oplus \k_{m+1} - 1} f(\o_i,\chi_i,c_i)\right)\,\Bigg|\, F^{\text{c}}_N\right)\\ \leq \frac{\Vert u \Vert_{\infty}}{N}\, E^x_{\theta^*, \t} \left( \sum_{m=0}^{a_N - 1 } \k_{m+1} \,\Big|\, F^{\text{c}}_N\right). 
\end{multline}
We now claim that
\begin{equation*}
F^{\text{c}}_N = \{\k_1 + \cdots + \k_{a_N} > N-1 \}.
\end{equation*} 
Indeed, the event $F^{\text{c}}_N$ occurs when there were less then $a_N = \lceil (N-1)x - N^{0.75} \rceil$ memory erasures performed by chance in the first $N$ stages. As chance moves may occur from the second stage on ($\o_1 = \pi \neq \pi^*$), this means that along stages $2,...,N$ there were less then $a_N = \lceil (N-1)x - N^{0.75} \rceil$ memory erasures. Moreover,  $\k_1 + \cdots + \k_{a_N} - a_N$ follows a negative binomial distribution with parameters $(a_N,x)$ (being the number of stages with no erasures performed by chance until the $a_N$'th erasure performed by chance). Therefore, as we have that  
\begin{multline}\label{Eq. Ug1}
E^x_{\theta^*, \t} \left( \sum_{m=0}^{a_N - 1} \k_{m+1}\,\Big|\, F^{\text{c}}_N\right) =  a_N  \\ + E^x_{\theta^*, \t} \left( \left(\sum_{m=0}^{a_N - 1} \k_{m+1} \right) - a_N \,\Big|\, \k_1 + \cdots + \k_{a_N} > N -1 \right),
\end{multline}
we may now use the following bound by Geyer (e.g., Eq. (16) in Geyer \cite{Geyer}) for the lower truncated mean of a negative binomial distribution $W$ with parameters $(r,p)$:
\begin{align*}
E(W\,|\, W > n ) = E(W) + \frac{n+1}{p(1+\b_{n+1}(r,p))},	\quad \forall n \in \N,
\end{align*}
where $\b_{n+1}(r,p):= P(W > n+1)/\, P(W=n+1)$. Applying a relaxed version of the above (using $1 + \b_{n+1}(r,p) \geq 1$) to our setting we obtain that for every  $\t \in \T$ it holds that
\begin{multline}\label{Eq. Ug2}
E^x_{\theta^*, \t} \left( \left(\sum_{m=0}^{a_N-1} \k_{m+1} \right) - a_N \,\Big|\, \k_1 + \cdots + \k_{a_N} > N-1\right)  \leq \\ \frac{a_N(1-x)}{x} + \frac{N}{x}.
\end{multline} 
Combining  Eqs.\ (\ref{Eq. Ug0}), \eqref{Eq. Ug1} and (\ref{Eq. Ug2}) we obtain that 
\begin{align}\label{Eq. Ug3}
& E^x_{\theta^*, \t} \left(\frac{1}{N} \sum_{m=0}^{a_N - 1} \left(\sum_{i = \k_0 \oplus \k_m}^{\k_0 \oplus \k_{m+1} - 1} f(\o_i,\chi_i,c_i)\right)\,\Bigg|\, F^{\text{c}}_N\right) \nonumber\\
& \quad \leq \frac{\Vert u \Vert_{\infty}}{N} \cdot \left( a_N + \frac{a_N(1-x)}{x} + \frac{N}{x}\right) \nonumber \\
& \quad  \leq \frac{\Vert u \Vert_{\infty}}{N} \cdot \left(Nx+1 + \frac{(Nx+1)(1-x)}{x} + \frac{N}{x} \right)\\
& \quad \leq \frac{\Vert u \Vert_{\infty}}{N} \cdot \left( 2N + \frac{2N}{x} + \frac{N}{x} \right)  = \Vert u \Vert_{\infty}\left(2 + \frac{3}{x}\right),\nonumber
\end{align}
where the second inequality follows from the fact that $a_N = \lceil (N-1)x - N^{0.75} \rceil \leq Nx+1$ and last inequality uses the relation $Nx+1 \leq 2N$. The last step in estimating $I_N^2(\t)$ consists of estimating $P^x_{\theta^*,\t}\, (F^{\text{c}}_N)$. Using Hoeffding's inequality for the sum of i.i.d.\ Bernoulli random variables $T_N$ (see relation \eqref{T_N Bernoulli Representation}) we obtain that for every $\t \in \T$ it holds that
\begin{align}\label{Hoef 1}
P^x_{\theta^*,\t}\, (F^{\text{c}}_N) & = P^x_{\theta^*,\t}\, \left( \{T_N < \lceil (N-1)x - N^{0.75} \rceil\}\right) \nonumber\\
&= P^x_{\theta^*,\t}\, \left(\{ T_N <  (N-1)x - N^{0.75}\}\right) \nonumber \\
& \leq P^x_{\theta^*,\t}\left( \{\vert T_N-(N-1)x \vert \geq N^{0.75}\} \right)\\
& \leq  2 \exp \left( \frac{-2 N^{1.5}}{N-1} \right)\nonumber\\
& \leq 2 \exp \left( -2\sqrt{N} \right) \nonumber
\end{align}
where the second equality follows from the fact that $T_N$ is integer-valued. 
Thus, combining Eqs. (\ref{Eq. Ug3}) and  (\ref{Hoef 1}) we have that
\begin{align*}
    I_N^2 (\t) & := P^x_{\theta^*,\t}\, (F^{\text{c}}_N)\, E^x_{\theta^*, \t} \left(\frac{1}{N} \sum_{m=0}^{a_N - 1} \left(\sum_{i = \k_0 \oplus \k_m}^{\k_0 \oplus \k_{m+1} - 1} f(\o_i,\chi_i,c_i)\right)\,\Bigg|\, F^{\text{c}}_N\right)\\
    & \leq 2 \exp \left( -2\sqrt{N} \right) \cdot \Vert u \Vert_{\infty}\left(2 + \frac{3}{x}\right).
\end{align*}
The latter combined with Eqs.\ \eqref{Eq. D deco1} and \eqref{Eq. Ug11} implies
\begin{align*}
A_N(x, \theta^{*},\t) & \geq v_{1-x}(\pi) - \frac{\Vert u \Vert_{\infty}}{x} \cdot \frac{2}{N^{0.25}} - 2 \exp \left( -2\sqrt{N} \right) \cdot \Vert u \Vert_{\infty}\left(2 + \frac{3}{x}\right)\\
& = v_{1-x}(\pi) - \frac{\Vert u \Vert_{\infty}}{x}\, \left( \frac{2}{N^{0.25}} + 2 \exp \left( -2\sqrt{N} \right) \left(2x+3\right)\right),
\end{align*}
which in conjunction with Eq.\ (\ref{Eq. approx.}) yields that 
\begin{equation*}
\gamma_N^{x} (\theta^{*},\t) \geq v_{1-x}(\pi) -  \frac{\Vert u \Vert_{\infty}}{x}\, \left( \frac{2}{N^{0.25}} + 2 \exp \left( -2\sqrt{N} \right) \left(2x+3\right) + \frac{1-x}{N} \right)
\end{equation*}
for every $\t \in \T$. The validity of Proposition \ref{prop1} may now be derived as \begin{equation*}
\frac{\Vert u \Vert_{\infty}}{x}\, \left( \frac{2}{N^{0.25}} + 2 \exp \left( -2\sqrt{N} \right) \left(2x+3\right) + \frac{1-x}{N} \right) \to 0
\end{equation*}
as $N \to \infty$.

We move on to the proof of Proposition \ref{prop2}.

\subsection{Proof of Proposition \ref{prop2}}\label{Subsec Prop 2}

Fix $y \in (x,1]$ and denote $\b_y := (y-x)/(1-x)$. Consider the following strategy $\t^{y} \in \T$: at any stage $n\geq 1$, 
\begin{align*}
    \t^y_n : (c_1,...,c_{n-1}) \mapsto (1-\b_y) \cdot \d_{\{0\}} + \b_y \cdot \d_{\{1\}}, \quad \forall (c_1,...,c_{n-1}) \in \{0,1\}^{n-1}.
\end{align*}
In words, in every stage $n\geq 1$, the adversary performs a memory erasure with probability $(y-x)/(1-x)$, independently of his past moves. 

For each $n\geq 2$ let $Z_n := \textbf{1}\{ \o_n \in \{\pi^*, \pi^{**}\} \}$. By the definition of $\t_y$, for any $\theta \in \Theta$, we have that $(Z_n)_{n\geq 2}$ forms an i.i.d.\ Bernoulli process with parameter $y$ on the the probability space $(H, \mathcal{H}, P^x_{\theta,\t^y})$. Indeed, the identical distribution follow from the fact that 
\begin{align*}
     P^x_{\theta,\t^y}\left(\{Z_n =1\} \right) & = P^x_{\theta,\t^y}\left(\{\o_n = \pi^*\} \right) + P^x_{\theta,\t^y}\left(\{\o_n \neq  \pi^*\} \cap \{\o_n =  \pi^*\} \right)\\
     & = x + (1-x)P^x_{\theta,\t^y} \left( \{\o_n =  \pi^*\} \,|\, \{\o_n \neq  \pi^*\} \right)\\
     & x + (1-x) \cdot \frac{y-x}{1-x} = x + y-x =y.
\end{align*}
The fact that the random variables $(Z_n)_{n\geq 2}$ are independent on $(H, \mathcal{H}, P^x_{\theta,\t^y})$ follows from the definition of $\t^y$, stating that the moves of the adversary are independent of past events. 

Let us now generalize the probabilistic framework considered in Subsection \ref{Subsec Probabilistic Framework} for a given $\theta \in \Theta$. First, define by iteration the sequence of random variables $(\k^*_n)_{n \geq 0}$ on $(H, \mathcal{H}, P^x_{\theta,\t^y})$ according to:
\begin{align*}
\k^*_0 & := 1, \nonumber \\
\k^*_n & := \inf\,\{\, n \geq 1\,:\, Z_{\k^*_0 + \k^*_1 \cdots + \k^*_{n-1}+n} = 1\, \} \quad  \\\
       &\,\,\, =  \inf\,\{\, n \geq 1\,:\, \o_{\k^*_0 + \k^*_1 \cdots + \k^*_{n-1}+n}  \in \{\pi^*, \pi^{**}\}\, \}, \quad  n \geq 1.
\end{align*}
The sequence $(\k^*_n)_{n\geq 1}$ counts the number of stages between successive memory erasures performed by either chance or the adversary. Since $(Z_n)_{n\geq 2}$ are i.i.d.\ Bernoulli random variables with parameter $y$ on $(H, \mathcal{H}, P^x_{\theta,\t^y})$, we have that under $P^x_{\theta,\t^y}$, $(\k^{*}_n)_{n \geq 1}$ are i.i.d.\ geometric random variables with parameter $y$.

Next, define for any $N \in \N$ the random integer $T^{*}_N$ by
\begin{align*}
T^*_N := \max\, \{\, n \in \{0,1,..., N-1\} \, : \, \k^*_0 + \k^*_1+ \cdots + \k^*_n \leq N\, \}.
\end{align*}
The random variable $T_N^*$, which also admits the representation $
T^{*}_N := Z_2+\cdots+Z_N$, counts the total number of memory erasures by either chance or the adversary in the first $N$ stages of $\G$. Lastly, let $\l^{*}_N := \k^{*}_0+\k^{*}_1+\cdots + \k^{*}_{T^{*}_N}$ be the last stage within the first $N$ stages at which a memory erasure was performed by either chance or the adversary.

In terms of the new probabilistic framework, the payoff $\g_N^{x} (\theta,\t^{y})$ can now be written as follows:
\begin{align*}
\gamma_N^x (\theta,\t^y) & = E^x_{\theta,\t^y} \left(\frac{1}{N}\sum_{n=1}^{\l^*_N} f(\o_n,\chi_n,c_n) \right) \quad \quad \quad := A^*_N (x,\theta,\t^y)  \\
& \quad \quad \quad + E^x_{\theta,\t^y} \left(\frac{1}{N}\sum_{n= \l^*_N +1}^{N} f(\o_n,\chi_n,c_n) \right) \quad \quad \quad := B^*_N (x,\theta,\t^y). \nonumber
\end{align*}
Using an adapted version of the arguments in the probabilistic analysis that leading to Eq.\ (\ref{Eq. approx.}), we have that in the current setup it holds
\begin{align}\label{Eq. approx.2}
\vert \g_N^{x} (\theta,\t^{y}) - A^{*}_N (x,\theta,\t^{y}) \vert \leq \frac{1-y}{y} \cdot \frac{\Vert u \Vert_{\infty}}{N}.
\end{align}
Our next goal now is to estimate $A^{*}_N (x,\theta,\t^{y})$ for large values of $N$. To do so, we consider the events $E_N := \{T^{*}_N \leq b_N\}$ where $b_N := \lceil (N-1)y + N^{0.75} \rceil\}$. Since $\l^{*}_N \leq N$ for every $N$, and $f$ is non-negative we have
\begin{align}\label{Eq. Decompos D2}
A^{*}_N (x,\theta,\t^{y}) & =  E^x_{\theta, \t^y} \left(\frac{1}{N} \sum_{m=0}^{T^*_N - 1} \left(\sum_{i = \k^*_0 \oplus \k^*_m}^{\k^*_0 \oplus \k^*_{m+1} - 1} f(\o_i,\chi_i,c_i)\right)\right) \nonumber \\ 
& \leq P^x_{\theta,\t^{y}} (E_N)\, E^x_{\theta,\t^{y}} \left(\frac{1}{N} \sum_{m=0}^{T^*_N - 1} \left(\sum_{i = \k^*_0 \oplus \k^*_m}^{\k^*_0 \oplus \k^*_{m+1} - 1} f(\o_i,\chi_i,c_i)\right)\,\Bigg|\, E_N\right) \nonumber\\
& 	\quad \quad \quad  + P^x_{\theta,\t^{y}} (E_N^{\text{c}}) \Vert u \Vert_{\infty} \nonumber \\
& \leq P^x_{\theta,\t^{y}} (E_N)\, E^x_{\theta,\t^{y}} \left(\frac{1}{N} \sum_{m=0}^{b_N - 1} \left(\sum_{i = \k^*_0 \oplus \k^*_m}^{\k^*_0 \oplus \k^*_{m+1} - 1} f(\o_i,\chi_i,c_i)\right)\,\Bigg|\, E_N\right) \nonumber \\
& 	\quad \quad \quad  + P^x_{\theta,\t^{y}} (E_N^{\text{c}}) \Vert u \Vert_{\infty}  \\
& \leq E^x_{\theta,\t^{y}} \left(\frac{1}{N} \sum_{m=0}^{b_N - 1} \left(\sum_{i = \k^*_0 \oplus \k^*_m}^{\k^*_0 \oplus \k^*_{m+1} - 1} f(\o_i,\chi_i,c_i)\right)\right) 	 + P^x_{\theta,\t^{y}} (E_N^{\text{c}}) \Vert u \Vert_{\infty}. \nonumber 
\end{align}
We proceed by estimating $P^x_{\theta,\t^{y}} (E_N^{\text{c}})$. Using Hoeffding's inequality we obtain that for every $N \geq 1$  it holds that
\begin{align}\label{Hoeff 2}
P^x_{\theta,\t^{y}} (E_N^{\text{c}}) & = P^x_{\theta,\t^{y}}\left( \{T_N^* >\lceil (N-1)y + N^{0.75} \rceil \}\right) \nonumber\\
& \leq  P^x_{\theta,\t^{y}}\left( \{T_N^* > (N-1)y + N^{0.75} \}\right) \nonumber\\
& \leq P^x_{\theta,\t^{y}} \left( \{\vert T^{*}_N - (N-1)y \vert > N^{0.75}\} \right)\\
& \leq  2 \exp \left( \frac{-2 N^{1.5}}{N-1} \right) \nonumber \\
& \leq  2 \exp \left( -2\sqrt{N} \right). \nonumber
\end{align}
Combining Eqs.\ \eqref{Eq. Decompos D2} and \eqref{Hoeff 2}, and using the linearity of $E^x_{\theta,\t^y}$ we obtain that 
\begin{align}\label{Eq. Prop2 Bound 1}
    A^{*}_N (x,\theta,\t^{y}) \leq \frac{1}{N} \sum_{m=0}^{b_N - 1} E^x_{\theta,\t^y} \left( \sum_{i = \k^*_0 \oplus \k^*_m}^{\k^*_0 \oplus \k^*_{m+1} - 1} f(\o_i,\chi_i,c_i) \right) +  2 \exp \left( -2\sqrt{N} \right) \cdot \Vert u \Vert_{\infty}.
\end{align}

We proceed with the following intuitive lemma. 
\begin{claim}\label{Claim Upper Bound within R-D block}
    For every $m\geq 0$ it holds that 
    \begin{align}\label{Eq. Claim Upper Bound within R-D block}
           E^x_{\theta,\t^y} \left( \sum_{i = \k^*_0 \oplus \k^*_m}^{\k^*_0 \oplus \k^*_{m+1} - 1} f(\o_i,\chi_i,c_i)\, \Bigg| \, \k^*_0 \oplus \k^*_m \right)  \leq \frac{v_{1-y}(\pi)}{y}.
    \end{align}
\end{claim}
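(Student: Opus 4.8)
The plan is to recognize the conditional sum in \eqref{Eq. Claim Upper Bound within R-D block} as (up to the factor $1/y$) the discounted payoff $\gamma_{1-y}(\pi,\s)$ of some strategy $\s$ of the sender in the original Markovian persuasion model, and then bound it by $v_{1-y}(\pi)$. Fix $m \geq 0$ and write $L := \k^*_0 \oplus \k^*_m$ for the (random) stage at which the $m$'th memory erasure performed by chance or the adversary occurs, with $L = 1$ when $m = 0$. The first observation is that $\o_L$ is always a copy of $\pi$: for $m = 0$ this is the initial state $\o_1 = \pi$, and for $m \geq 1$ the definitions of $\k^*_m$ and of $\G$ force $\o_L \in \{\pi^*,\pi^{**}\}$. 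In particular the action set available at $\o_L$ is $\mathcal{A}_{\pi}$, so the sub-game that starts at stage $L$ begins from belief $\pi$.

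Next I would isolate the block of stages $L, L+1, \dots, L+\k^*_{m+1}-1$, whose length is $\k^*_{m+1}$. By the definition of $\k^*_{m+1}$ we have $Z_n = 0$ for $L < n \leq L+\k^*_{m+1}-1$, hence $\o_n \in \dk$ there; and a direct computation with the transition rule of $\G$ and the strategy $\t^y$ shows that, conditionally on $Z_{n+1}=0$, the state moves by $\o_{n+1} = \xi_i M$ with probability $\a_i$, where $\chi_n = \{(\a_i,\xi_i)\}$ is the split chosen by the sender. These are precisely the transition dynamics of the Markov decision problem of Subsection \ref{Subsec MDP reformulation} started at $\pi$, and since $f(\o_n,\chi_n,c_n) = \sum_i \a_i \cdot u(\xi_i)$ does not depend on $c_n$ it coincides with the MDP reward. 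Moreover, after conditioning on the history $\mathcal{F}_L$ of everything up to and including stage $L$, the continuation of $\theta$ on the block is a deterministic function of the states and splits observed inside the block, hence is induced by a pure strategy $\s \in \Sigma$ with prior $\pi$.

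The decisive input is that $\k^*_{m+1}$ is geometric with parameter $y$ and independent of the sender's play inside the block: this holds because $(Z_n)_{n\geq 2}$ are i.i.d.\ Bernoulli($y$) under $P^x_{\theta,\t^y}$ and, by the form of the transition rule and of $\t^y$, independent of the sender's splits. Consequently, given $\mathcal{F}_L$, the block reproduces a genuine random duration instance of the Markovian persuasion model started at $\pi$ with geometric parameter $y$; reindexing by $j = i - L + 1$ and disintegrating the geometric clock exactly as in the proof of Claim \ref{Claim Random Duration}, we obtain
\begin{align*}
E^x_{\theta,\t^y}\!\left( \sum_{i = \k^*_0 \oplus \k^*_m}^{\k^*_0 \oplus \k^*_{m+1} - 1} f(\o_i,\chi_i,c_i) \;\Big|\; \mathcal{F}_L \right)
&= \frac{1}{y}\sum_{n=1}^{\infty} y(1-y)^{n-1}\, E_{\pi,\s}\!\big( u(p_n) \big) \\
&= \frac{\gamma_{1-y}(\pi,\s)}{y} \;\leq\; \frac{v_{1-y}(\pi)}{y},
\end{align*}
the last inequality being the definition of $v_{1-y}(\pi)$ as a supremum over $\Sigma$. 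Since the right-hand side is deterministic, taking conditional expectation with respect to $\k^*_0 \oplus \k^*_m$ via the tower property yields \eqref{Eq. Claim Upper Bound within R-D block}.

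I expect the main obstacle to be the rigorous treatment of the random starting stage $L = \k^*_0 \oplus \k^*_m$: one needs a strong-Markov-type argument — legitimate here because both the transition rule of $\G$ and the strategy $\t^y$ are time-homogeneous and the sender's continuation is deterministic given $\mathcal{F}_L$ — to conclude that, after conditioning on the pre-$L$ history, the block is distributed as a fresh play from $\pi$, that the geometric clock $\k^*_{m+1}$ remains independent of the sender's within-block behaviour, and that the induced continuation is an admissible element of $\Sigma$ (so that it is dominated by $v_{1-y}(\pi)$). Everything else — the identification of $f$ with the MDP reward and the geometric disintegration — is a routine repetition of the argument in the proof of Claim \ref{Claim Random Duration}.
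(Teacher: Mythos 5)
Your proposal is correct and follows essentially the same route as the paper's proof: identify the block $[\k^*_0 \oplus \k^*_m,\, \k^*_0 \oplus \k^*_{m+1}-1]$ as a random-duration play of the MDP of Subsection~\ref{Subsec MDP reformulation} started at $\pi$ with geometric clock of parameter $y$, independent of the sender's splits, and bound via Claim~\ref{Claim Random Duration}. The only difference is presentational: you handle the dependence of the continuation on pre-$L$ history by conditioning on the full $\mathcal{F}_L$ (making the continuation a pure strategy in $\Sigma$) and then applying the tower property, whereas the paper conditions only on $\{\k^*_0\oplus\k^*_m=l\}$ and treats the continuation as a mixed strategy whose payoff is still dominated by the value; both resolve the same point.
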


\begin{proof}[Proof of Claim \ref{Claim Upper Bound within R-D block}]
    Consider the event $\{\k^*_0 \oplus \k^*_m=l\}$ for some positive integer $l$. Conditional on this event, the play along stages $l,...,l+\k^*_{m+1} - 1$ follows the rules of the MDP described in Subsection \ref{Subsec MDP reformulation} starting from the prior $\pi$ with random duration of geometric parameter $(1-y)$. Nevertheless, the sender may base his actions along stages $l,...,l+\k^*_{m+1} - 1$ on events that occurred along the first $l-1$ stages of $\G$. 
    
    This fact does not help the sender to guarantee an expected payoff higher then the random duration value under the MDP setup, which in view of Claim \ref{Claim Random Duration} equals to $v_{1-y}(\pi)/y$. Indeed, along stages $l,...,l+\k^*_{m+1} - 1$ each strategy $\theta$ can be viewed as a mixed strategy in the MDP (see Subsection \ref{Subsec MDP reformulation}). It chooses a pure strategy in the MDP, based on the history $\o_1,\chi_1,...,\o_{l-1},\chi_{l-1}$ that occurred in the first $l-1$ stages (where $\o_l = \pi^{**}$). Such history is drawn at random by $P^x_{\theta, \t^y}$. However, as already mentioned in regards to the MDP in Subsection \ref{Subsec MDP reformulation}, the $(1-y)$-discounted value is attained at a pure stationary strategy, and thus the sender cannot benefit from using mixed strategies. 
\end{proof}

We proceed with the proof of Proposition \ref{prop2}. Implementing the result of Claim \ref{Claim Upper Bound within R-D block} to Eq.\ \eqref{Eq. Prop2 Bound 1} we obtain
\begin{align*}
    A^{*}_N (x,\theta,\t^{y}) & \leq \frac{1}{N} \sum_{m=0}^{b_N - 1} \frac{v_{1-y}(\pi)}{y} +  2 \exp \left( -2\sqrt{N} \right) \cdot \Vert u \Vert_{\infty}\\
    & = \frac{b_N}{N} \cdot \frac{v_{1-y}(\pi)}{y} +  2 \exp \left( -2\sqrt{N} \right) \cdot \Vert u \Vert_{\infty}\\
    & \leq v_{1-y}(\pi) - \left( 1 - \frac{b_N}{Ny}\right)\cdot\Vert u \Vert_{\infty} + 2 \exp \left( -2\sqrt{N} \right) \cdot \Vert u \Vert_{\infty} \\
    & \leq  v_{1-y}(\pi) - \left(\frac{Ny - (N-1)y - N^{0.75}}{Ny}\right)\cdot\Vert u \Vert_{\infty} + 2 \exp \left( -2\sqrt{N} \right) \cdot \Vert u \Vert_{\infty}\\
    &\leq  v_{1-y}(\pi) - \left(\frac{y - N^{0.75}}{Ny}\right)\cdot\Vert u \Vert_{\infty} + 2 \exp \left( -2\sqrt{N} \right) \cdot \Vert u \Vert_{\infty} \\
    & \leq v_{1-y}(\pi) + \left( \frac{1}{yN^{0.25}} + 2 \exp \left( -2\sqrt{N}  \right) \right)\cdot \Vert u \Vert_{\infty}.
\end{align*}
The above together with Eq.\ \eqref{Eq. approx.2} implies that
\begin{align*}
    \g_N^{x} (\theta,\t^{y}) \leq v_{1-y}(\pi) + \left( \frac{1}{yN^{0.25}} + 2 \exp \left( -2\sqrt{N}  \right) \right)\cdot \Vert u \Vert_{\infty} + \frac{1-y}{y} \cdot \frac{\Vert u \Vert_{\infty}}{N}.
\end{align*}
This completes the proof of Proposition \ref{prop2} as 
\begin{align*}
    \left( \frac{1}{yN^{0.25}} + 2 \exp \left( -2\sqrt{N}  \right) \right)\cdot \Vert u \Vert_{\infty} + \frac{1-y}{y} \cdot \frac{\Vert u \Vert_{\infty}}{N} \to 0 \quad \text{as} \quad N \to \infty.
\end{align*}

\section{Proof of Theorem \ref{Thm3}}\label{Sec Proof of Thm3}

The cornerstone property required for the proof is stated in the next Lemma.

\begin{lemma}\label{Lemma Thm3 - 1}
    For every $\d \in (0,1)$ and $\x \in \dk$ it holds
    \begin{align*}
        \vert v_{\d} (\x) - v_{\d} (\x M)\vert \leq 
        \frac{(1-\d) \Vert u \Vert_{\infty}}{\d}.
    \end{align*}
\end{lemma}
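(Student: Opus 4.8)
The plan is to derive the bound directly from the functional characterization of the discounted value obtained in Subsections \ref{SubSec Opt Str} and \ref{Sub-sub sec Functional Equation}, without any recourse to the strategy-level definition. Concretely, I would use that $v_{\d} = (\text{Cav}\, f_{\d})$ where $f_{\d}(\x) = (1-\d)\cdot u(\x) + \d\cdot (v_{\d}\circ M)(\x)$, i.e.\ $v_{\d}(\x) = \sup_{\chi = \{(\a_s,\x_s)\} \in \mathcal{A}_{\x}} \sum_{s} \a_s\, f_{\d}(\x_s)$ as in \eqref{Belmann2}, together with the facts recorded there: $v_{\d}$ is concave on $\dk$; $v_{\d}\circ M$ is concave (composition of the concave $v_{\d}$ with the affine map $\x\mapsto \x M$); and $0 \le v_{\d} \le \Vert u\Vert_{\infty}$ (because every $u(p_n)$ lies in $[0,\Vert u\Vert_{\infty}]$), while $u\ge 0$. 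I would then fix $\x\in\dk$ and bound the two one-sided differences $v_{\d}(\x M)-v_{\d}(\x)$ and $v_{\d}(\x)-v_{\d}(\x M)$ separately.

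For the first difference, since $(\text{Cav}\, f_{\d})$ majorizes $f_{\d}$ pointwise,
\begin{align*}
v_{\d}(\x) \;\ge\; f_{\d}(\x) \;=\; (1-\d)\, u(\x) + \d\, v_{\d}(\x M) \;\ge\; \d\, v_{\d}(\x M),
\end{align*}
using $u\ge 0$ in the last step; hence $v_{\d}(\x M)\le v_{\d}(\x)/\d$ and therefore $v_{\d}(\x M)-v_{\d}(\x) \le \tfrac{1-\d}{\d}\, v_{\d}(\x) \le \tfrac{(1-\d)\Vert u\Vert_{\infty}}{\d}$; this is the step that produces the factor $1/\d$. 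For the reverse difference, take any split $\{(\a_s,\x_s)\}\in\mathcal{A}_{\x}$: then $\sum_s \a_s\, u(\x_s) \le \Vert u\Vert_{\infty}$, while Jensen's inequality for the concave function $v_{\d}\circ M$ (recalling $\sum_s \a_s \x_s = \x$) gives $\sum_s \a_s\, v_{\d}(\x_s M) = \sum_s \a_s\,(v_{\d}\circ M)(\x_s) \le (v_{\d}\circ M)(\x) = v_{\d}(\x M)$. Substituting into \eqref{Belmann2} and taking the supremum over splits yields $v_{\d}(\x) \le (1-\d)\Vert u\Vert_{\infty} + \d\, v_{\d}(\x M)$, so $v_{\d}(\x)-v_{\d}(\x M) \le (1-\d)\Vert u\Vert_{\infty} - (1-\d) v_{\d}(\x M) \le (1-\d)\Vert u\Vert_{\infty} \le \tfrac{(1-\d)\Vert u\Vert_{\infty}}{\d}$. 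Combining the two estimates gives $|v_{\d}(\x)-v_{\d}(\x M)| \le \tfrac{(1-\d)\Vert u\Vert_{\infty}}{\d}$.

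I do not anticipate a real obstacle; the argument is entirely soft. The only points requiring attention are the decision to work with the functional equation $v_{\d}=(\text{Cav}\, f_{\d})$ rather than with the definition of $v_{\d}$, and the Jensen step, which needs both the concavity of $v_{\d}\circ M$ and the fact that the pointwise bound survives passing to the supremum defining $(\text{Cav}\, f_{\d})$. If one preferred a probabilistic argument, the same two inequalities also follow from the random-duration reformulation of Subsection \ref{Subsec Random Duration}: using Claim \ref{Claim Random Duration}, the strategy that transmits an uninformative signal at the first stage (so $p_1 = \x$ and $q_2 = \x M$) and plays optimally thereafter, and the trivial bound $v^{\text{r-d}}_x \le \Vert u\Vert_{\infty}/x$, and then setting $x = 1-\d$. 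I would present the functional-equation proof as the main one and, if anything, only remark on the alternative.
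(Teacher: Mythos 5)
Your proof is correct, and for the harder direction ($v_{\d}(\x)-v_{\d}(\x M)\le \tfrac{(1-\d)\Vert u\Vert_\infty}{\d}$) it takes a genuinely different route from the paper. The paper argues probabilistically: starting the game from prior $\x M$, it constructs a strategy $\s^*$ that splits $\x M$ at stage $1$ to $\mu^*_2(\d,\x)$ (the law of the second posterior under the optimal policy from $\x$) and plays the optimal stationary policy thereafter, then computes $\gamma_\d(\x M,\s^*)=\tfrac1\d\bigl(v_\d(\x)-(1-\d)E\,u(p_1)\bigr)\ge v_\d(\x)-\tfrac{1-\d}{\d}\Vert u\Vert_\infty$, picking up a factor $1/\d$ from the stage-shift. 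You instead stay entirely at the level of the functional equation $v_\d=\mathrm{Cav}\,f_\d$: bounding the reward term by $\Vert u\Vert_\infty$ and applying Jensen's inequality to the concave function $v_\d\circ M$ along the split constraint $\sum_s\a_s\x_s=\x$ gives $v_\d(\x)\le(1-\d)\Vert u\Vert_\infty+\d\,v_\d(\x M)$ directly, hence $v_\d(\x)-v_\d(\x M)\le(1-\d)\Vert u\Vert_\infty$. Your bound for this direction is in fact tighter (no $1/\d$), while for the easier direction the paper's algebraic rearrangement gives $(1-\d)\Vert u\Vert_\infty$ where you settle for $\tfrac{1-\d}{\d}v_\d(\x)$; either way both proofs land on the stated bound. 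The trade-off is that the paper's construction reuses the machinery of the optimal-posterior distributions $\mu^*_n$ already set up for the stochastic-game argument, whereas your argument is purely ``soft,'' using only concavity of $v_\d$ and nonnegativity of $u$, and requires no strategy construction at all. Both steps you take are justified by what the paper has already established (concavity of $v_\d$ and hence of $v_\d\circ M$, the identity $v_\d=\mathrm{Cav}\,f_\d$, and $0\le v_\d\le\Vert u\Vert_\infty$), so there is no gap.
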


\begin{proof}[Proof of Lemma \ref{Lemma Thm3 - 1}]
    Fix $\x$ and $\d$. Recall that by the signal cardinality invariance principal we may take the signal set $S = \N$ (see Subsection \ref{SICP property}) Consider first the case where the prior $q_1$ in the Markovian persuasion model is $\x$. Then, as the sender can keep mute at the first stage and plays optimally from the second stage on we must have that
    \begin{align}\label{Eq. 3.1}
        v_{\d} (\x) & \geq (1-\d) u(\x) + \d v_{\d} (\x M) \nonumber \\
        & = v_{\d} (\x M) - (1-\d) v_{\d} (\x M) + (1-\d)u(\x)\\
        & \geq v_{\d} (\x M) - (1-\d) v_{\d} (\x M) \geq v_{\d} (\x M) - \Vert u \Vert_{\infty} (1-\d), \nonumber
    \end{align}
    where the second inequality follows from the non-negativity of $u$.

    On the opposite side consider the case where the prior $q_1 = \x M$. Let us define the signaling policy $\s^* \in \Sigma$ as follows:
    \begin{itemize}
        \item At the first stage split $\x M$ to $\mu^*_2 (\d,\x)$, where $\mu^*_2 (\d,\x)$ was defined in Subsection \ref{Subsec Optimal Distributions}.
        \item Starting from the second stage on follow the optimal stationary strategy $\chi_{\d}^* :\dk \to \bigcup_{\x \in \dk} \mathcal{A}_{\xi}$ defined in Subsection \ref{Sub-sub sec Functional Equation}.
    \end{itemize}
    By the definition of $\s^*$ we have
    \begin{align*}
        \g_{\d} (\x M, \s^*) & = E_{\x, \chi_{\d}^*} \left( (1-\d) \sum_{n=1}^{\infty} \d^{n-1} u(p_{n+1}) \right)\\
        & = E_{\x, \chi_{\d}^*} \left( (1-\d) \sum_{n=2}^{\infty} \d^{n-2} u(p_{n}) \right)\\
        & = \frac{1}{\d} \left( v_{\d} (\x) - (1-\d) E_{\x, \chi_{\d}^*} u(p_{1}) \right)\\
        & \geq \frac{1}{\d} \left( v_{\d} (\x) - (1-\d) \Vert u \Vert_{\infty} \right) \geq v_{\d} (\x) - \frac{(1-\d)}{\d} \Vert u \Vert_{\infty},
        \end{align*}
        where in the last inequality we used the non-negativity of $v_{\d}$. Therefore we obtain that $v_{\d}(\x M) \geq  \g_{\d} (\x M, \s^*) \geq v_{\d} (\x) - \frac{(1-\d)}{\d} \Vert u \Vert_{\infty}$, which together with Eq.\ \eqref{Eq. 3.1} suffices for the proof of the Lemma.
    \end{proof}

    \begin{corollary}\label{Corollary 3.1}
        For every $n\geq 1$, $\x \in \dk$ and $\d \in (0,1)$ it holds
        \begin{align*}
        \vert v_{\d} (\x) - v_{\d} (\x M^n)\vert \leq 
        \frac{n (1-\d) \Vert u \Vert_{\infty}}{\d}.
    \end{align*}
    \end{corollary}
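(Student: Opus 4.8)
\textbf{Proof proposal for Corollary \ref{Corollary 3.1}.}
The plan is to proceed by induction on $n$, using Lemma \ref{Lemma Thm3 - 1} as the one-step bound and the triangle inequality to accumulate the error over $n$ applications of the transition matrix $M$. The base case $n = 1$ is exactly the statement of Lemma \ref{Lemma Thm3 - 1}. For the inductive step, I would fix $\xi \in \dk$ and $\delta \in (0,1)$, assume the bound holds for $n-1$, and write the telescoping decomposition
\begin{align*}
v_{\d}(\x) - v_{\d}(\x M^n) = \bigl( v_{\d}(\x) - v_{\d}(\x M^{n-1}) \bigr) + \bigl( v_{\d}(\x M^{n-1}) - v_{\d}((\x M^{n-1}) M) \bigr).
\end{align*}
The first parenthesis is controlled by the induction hypothesis, giving at most $(n-1)(1-\d)\Vert u \Vert_{\infty}/\d$ in absolute value. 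For the second parenthesis, the key observation is that $\x M^{n-1}$ is again an element of $\dk$ (since $M$ is stochastic, $\dk$ is invariant under right multiplication by $M$), so Lemma \ref{Lemma Thm3 - 1} applies with the belief $\x M^{n-1}$ in place of $\x$, bounding that term by $(1-\d)\Vert u \Vert_{\infty}/\d$. Adding the two contributions and applying the triangle inequality yields $\vert v_{\d}(\x) - v_{\d}(\x M^n)\vert \leq n(1-\d)\Vert u \Vert_{\infty}/\d$, completing the induction.

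There is no real obstacle here; the only point requiring a word of care is the legitimacy of applying Lemma \ref{Lemma Thm3 - 1} at the intermediate beliefs $\x M^{k}$, which is immediate from the invariance of $\dk$ under $M$. I would simply present the induction in two or three lines.
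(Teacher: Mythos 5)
Your proof is correct and takes essentially the same approach as the paper: the paper writes the telescoping sum $\sum_{i=1}^n \vert v_\d(\xi M^{i-1}) - v_\d(\xi M^i)\vert$ directly and bounds each term by Lemma \ref{Lemma Thm3 - 1}, which is exactly what your induction unrolls to.
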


    \begin{proof}[Proof of Corollary \ref{Corollary 3.1}]
        Using the triangle inequality and Lemma \ref{Lemma Thm3 - 1} we obtain
        \begin{align*}
        \vert v_{\d} (\x) - v_{\d} (\x M^n)\vert \leq \sum_{i=1}^{n} \vert v_{\d} (\x M^{i-1}) - v_{\d} (\x M^i)\vert \leq \frac{n (1-\d) \Vert u \Vert_{\infty}}{\d}
        \end{align*}
        as desired. 
    \end{proof}

    Let $\Vert \cdot \Vert_1$ denote the $\l_1$-norm on $\dk$, i.e., $\Vert \x - p \Vert_1 = \sum_{\l \in K} \vert \x^{\l} - p^{\l} \vert$ for $\x, p \in \dk$. For every $\x \in \dk$ and $\a>0$ set $B_{\l_1} (\x,\a):= \{p \in \dk: \Vert p-\x\Vert_1 \leq \a \}$ to be the $\l_1$-norm ball of radius $\a$ around $\x$.

    The following Lemma concerns a continuity property of $v_{\d}$ around $\p$. For its statement let us introduce the notation $c_* := \min_{\l \in K} \p^{\l}$, and note that by the ergodicity assumption we have $c_*>0$.

    \begin{lemma}\label{Lemma Thm3 - 2}
        For every two beliefs $\x,p \in B_{\l_1}(\p, c_{*}/2)$ and $\d \in [0,1)$ it holds
        \begin{align}\label{Eq. local Lipschitz}
        \vert v_{\d}(\x) - v_{\d}(p) \vert < \frac{2 \Vert u \Vert_{\infty}}{c_*}  \Vert \x - p \Vert_1.
        \end{align}
    \end{lemma}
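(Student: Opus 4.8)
The plan is to derive \eqref{Eq. local Lipschitz} from the concavity of $v_{\d}$ on $\dk$ (recall from Subsection~\ref{Sub-sub sec Functional Equation} that $v_{\d}=(\text{Cav}\,f_{\d})$) together with the uniform bound $0\le v_{\d}(\cdot)\le \Vert u\Vert_{\infty}$, which is immediate from the definition of $\g_{\d}(\cdot,\cdot)$ and from $0\le u\le\Vert u\Vert_{\infty}$. The geometric ingredient is that the ball $B_{\l_1}(\p,c_*/2)$ lies well inside $\dk$: for any $q\in B_{\l_1}(\p,c_*/2)$ and any $\l\in K$ one has $q^{\l}\ge \p^{\l}-\Vert q-\p\Vert_1\ge c_*-c_*/2=c_*/2$, so a segment with endpoints in this ball can be extended past either endpoint and still remain in $\dk$; it is precisely the radius $c_*/2$ that makes the extension length controllable.

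Concretely, I would assume $\x\neq p$ (the inequality being trivial otherwise) and $\Vert u\Vert_{\infty}>0$ (otherwise $v_{\d}\equiv 0$). Put $\lambda:=c_*/(2\Vert\x-p\Vert_1)>0$ and $q:=p+\lambda(p-\x)$. A direct check gives $\sum_{\l}q^{\l}=1+\lambda(1-1)=1$ and, for every $\l\in K$, $q^{\l}=p^{\l}+\lambda(p^{\l}-\x^{\l})\ge p^{\l}-\lambda\Vert p-\x\Vert_1\ge c_*/2-c_*/2=0$, so $q\in\dk$; the whole point of this choice is the identity $p=\tfrac{1}{1+\lambda}\,q+\tfrac{\lambda}{1+\lambda}\,\x$, a convex combination. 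Concavity of $v_{\d}$ then yields $v_{\d}(p)\ge \tfrac{1}{1+\lambda}v_{\d}(q)+\tfrac{\lambda}{1+\lambda}v_{\d}(\x)$, whence
\[
v_{\d}(p)-v_{\d}(\x)\ \ge\ \tfrac{1}{1+\lambda}\bigl(v_{\d}(q)-v_{\d}(\x)\bigr)\ \ge\ -\tfrac{\Vert u\Vert_{\infty}}{1+\lambda}\ >\ -\tfrac{\Vert u\Vert_{\infty}}{\lambda}\ =\ -\tfrac{2\Vert u\Vert_{\infty}}{c_*}\,\Vert\x-p\Vert_1 ,
\]
using $0\le v_{\d}\le\Vert u\Vert_{\infty}$ and $0<\lambda<1+\lambda$. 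Interchanging the roles of $\x$ and $p$ gives the symmetric bound $v_{\d}(\x)-v_{\d}(p)>-\tfrac{2\Vert u\Vert_{\infty}}{c_*}\Vert\x-p\Vert_1$, and the two inequalities together are exactly~\eqref{Eq. local Lipschitz}.

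The only step requiring genuine care is verifying that the extrapolated point $q$ remains in $\dk$ for the stated value of $\lambda$: this is where the radius $c_*/2$ enters, and it is exactly what pins down the Lipschitz constant $2\Vert u\Vert_{\infty}/c_*$. Everything else is the standard observation that a bounded concave function is locally Lipschitz away from the boundary of its domain, so I do not expect any obstruction there.
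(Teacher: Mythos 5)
Your proof is correct and follows essentially the same route as the paper: extend the segment from $\x$ through $p$ a controlled distance while staying in $\dk$, apply concavity of $v_\d$ at the interior point of that segment, and close with the uniform bound $0\le v_\d\le\Vert u\Vert_\infty$. The only cosmetic difference is the choice of extrapolated point: the paper takes $z$ at the intersection of the ray $\{\x+\alpha(p-\x):\alpha\ge 0\}$ with $\partial B_{\l_1}(\p,c_*)$ and then bounds $\Vert\x-z\Vert_1\ge c_*/2$, whereas you choose $q=p+\lambda(p-\x)$ with $\lambda=c_*/(2\Vert\x-p\Vert_1)$, which fixes $\Vert p-q\Vert_1=c_*/2$ exactly and makes the membership $q\in\dk$ a one-line coordinate check via $p^\ell\ge c_*/2$; both pin down the same Lipschitz constant $2\Vert u\Vert_\infty/c_*$. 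A small bonus of your write-up is that the step $\tfrac{1}{1+\lambda}<\tfrac{1}{\lambda}$ (with $\Vert u\Vert_\infty>0$) produces the strict inequality stated in the lemma cleanly, while the paper's chain as written yields only $\le$; in any case only the weak bound is used downstream.
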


  \begin{proof}[Proof of Lemma \ref{Lemma Thm3 - 2}]
  Denote by $\partial C$ the boundary of a closed subset $C \subseteq \dk$. Since $\Vert q - \p\Vert_1 > c_* $ for every $q \in \partial\, \dk$ it holds that 
  \begin{align}\label{Eq. Inclusion of l_1 ball}
      \Vert \p - q \Vert_1 = c_*, \quad \forall q \in \partial B_{\l_1} (\p, c_*).
  \end{align}
  Take $p,\x \in B_{\l_1}(\p, c_{*}/2)$. Let $z \in \partial B_{\l_1} (\p, c_*)$ be the belief in the intersection of the ray $\{ \x + \a (p-\x): \a \in \R_{+}\}$ with $\partial B_{\l_1} (\p, c_*)$. It holds that 
  \begin{align*}
      p = \frac{\Vert z- p\Vert_1}{\Vert \x- z\Vert_1} \cdot \x + \frac{\Vert \x- p\Vert_1}{\Vert \x- z\Vert_1} \cdot z.
  \end{align*}
  As $v_{\d}$ is concave on $\dk$ for every $\d \in [0,1)$ we obtain that
  \begin{align*}
      v_{\d} (p) & \geq \frac{\Vert z- p\Vert_1}{\Vert \x- z\Vert_1} \cdot v_{\d} (\x) + \frac{\Vert \x- p\Vert_1}{\Vert \x- z\Vert_1}\cdot v_{\d}(z)\\
      & = v_{\d} (\x) + \frac{\Vert \x- p\Vert_1}{\Vert \x- z\Vert_1} (v_{\d}(z) - v_{\d} (\x))\\
      & \geq v_{\d} (\x) - \Vert u \Vert_{\infty} \frac{\Vert \x- p\Vert_1}{c_* /2},
  \end{align*}
  where the last inequality follows from $0 \leq v_\d \leq \Vert u \Vert_{\infty}$ and the inequality $\Vert \x- z\Vert_1 \geq c_* /2$ which holds due to relation \eqref{Eq. Inclusion of l_1 ball}. Therefore we have obtained that 
  \begin{align*}
     v_{\d} (p) - v_{\d} (\x) \geq -  \frac{2 \Vert u \Vert_{\infty}}{c_*} \Vert \x- p\Vert_1, \quad \forall \x, p \in B_{\l_1}(\p, c_{*}/2).
  \end{align*}
  Changing the roles of $\x$ and $p$ we obtain that Eq.\ \eqref{Eq. local Lipschitz} holds, thus proving the Lemma.
  \end{proof}

The last preliminary result required for the proof of Theorem \ref{Thm3} is stated in the following Lemma.

\begin{lemma}\label{Lemma Mixing bound}
    For $\d \in (1 - c_{*}/2,1)$ and every $\x \in \dk$ it holds that:
    \begin{align*}
       \sup_{\x \in \dk} \vert v_{\d}(\x) - v_{\d} (\p)\vert \leq  O\left((1-\d)\log_2 \left(\frac{1}{1-\d}\right)\right).
    \end{align*}
\end{lemma}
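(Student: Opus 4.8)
The plan is to pass from an arbitrary $\xi \in \dk$ to $\pi$ in two stages, using the iterate $\xi M^n$ for a well-chosen $n = n(\delta)$ as an intermediate point, via the triangle inequality
\begin{align*}
\vert v_\delta(\xi) - v_\delta(\pi)\vert \le \vert v_\delta(\xi) - v_\delta(\xi M^n)\vert + \vert v_\delta(\xi M^n) - v_\delta(\pi)\vert.
\end{align*}
Corollary \ref{Corollary 3.1} bounds the first term by $n(1-\delta)\Vert u\Vert_{\infty}/\delta$, which is harmless provided $n$ grows only logarithmically in $1/(1-\delta)$. The second term will be controlled by the local Lipschitz estimate of Lemma \ref{Lemma Thm3 - 2}, which becomes available once $\xi M^n$ has entered the ball $B_{\ell_1}(\pi,c_*/2)$, i.e.\ once the chain has essentially mixed. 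Choosing $n$ is thus a balancing act: large enough to mix, small enough to keep $n(1-\delta)$ of the target order.

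The quantitative ingredient is the submultiplicativity of the mixing profile of an ergodic finite chain (Levin and Peres \cite{Peres}): writing $t_{\mathrm{mix}}$ for the mixing time of the chain with transition matrix $M$ (at threshold $1/4$, say), one has for every probability vector $q$ and every integer $\ell \ge 1$ that $\Vert q M^{\ell t_{\mathrm{mix}}} - \pi\Vert_{TV} \le 2^{-\ell}$, hence $\Vert q M^{\ell t_{\mathrm{mix}}} - \pi\Vert_1 \le 2^{1-\ell}$, uniformly in $q$, using $\Vert\cdot\Vert_1 = 2\Vert\cdot\Vert_{TV}$ on $\dk$. I would then set $\ell := \lceil \log_2(4/(1-\delta))\rceil$ and $n := \ell\, t_{\mathrm{mix}}$, so that $\Vert \xi M^n - \pi\Vert_1 \le 2^{1-\ell} \le (1-\delta)/2$; since the hypothesis $\delta > 1 - c_*/2$ gives $1-\delta < c_*/2$, we obtain $\Vert \xi M^n - \pi\Vert_1 < c_*/4 < c_*/2$, so both $\xi M^n$ and $\pi$ lie in $B_{\ell_1}(\pi,c_*/2)$.

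It then remains to combine the two estimates. By Lemma \ref{Lemma Thm3 - 2},
\begin{align*}
\vert v_\delta(\xi M^n) - v_\delta(\pi)\vert \le \frac{2\Vert u\Vert_\infty}{c_*}\,\Vert \xi M^n - \pi\Vert_1 \le \frac{\Vert u\Vert_\infty}{c_*}(1-\delta) = O(1-\delta),
\end{align*}
while Corollary \ref{Corollary 3.1}, together with $\delta > 1 - c_*/2 \ge 1/2$ (recall $c_* \le 1$), so that $1/\delta < 2$, gives
\begin{align*}
\vert v_\delta(\xi) - v_\delta(\xi M^n)\vert \le \frac{n(1-\delta)\Vert u\Vert_\infty}{\delta} \le 2 t_{\mathrm{mix}}\Vert u\Vert_\infty \Big\lceil \log_2\tfrac{4}{1-\delta}\Big\rceil (1-\delta) = O\!\Big((1-\delta)\log_2\tfrac{1}{1-\delta}\Big).
\end{align*}
Adding the two bounds yields $\vert v_\delta(\xi) - v_\delta(\pi)\vert \le O((1-\delta)\log_2(1/(1-\delta)))$ with a constant independent of $\xi$, and taking the supremum over $\xi \in \dk$ completes the proof.

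The one delicate point is matching the regime $\delta \in (1-c_*/2,1)$ with the precision to which the chain must be mixed: one needs $n$ large enough that $\Vert \xi M^n - \pi\Vert_1$ drops below $c_*/2$ (to invoke Lemma \ref{Lemma Thm3 - 2}) yet only of order $\log_2(1/(1-\delta))$ (to keep $n(1-\delta)$ on target), and the submultiplicative mixing bound is exactly what reconciles these two demands while the assumption on $\delta$ guarantees the iterate lands well inside the ball. It is also essential that the mixing bound is uniform in the initial distribution, so that a single $n$ serves every $\xi$ simultaneously and the passage to the supremum is legitimate.
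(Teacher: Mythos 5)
Your proof is correct and follows essentially the same route as the paper: triangle inequality via the intermediate point $\xi M^n$, Corollary \ref{Corollary 3.1} for the first increment, Lemma \ref{Lemma Thm3 - 2} (available because the hypothesis $\delta > 1 - c_*/2$ puts $\xi M^n$ inside $B_{\ell_1}(\pi, c_*/2)$) for the second, and a Levin--Peres mixing-time bound to choose $n = O(\log_2(1/(1-\delta)))$. The paper cites the mixing bound in the compressed form ``there exists $C$ and $N_\delta \le C\lceil\log_2\frac{1}{1-\delta}\rceil$ with $\xi M^{N_\delta} \in B_{\ell_1}(\pi,1-\delta)$,'' whereas you unpack it explicitly through submultiplicativity of $\bar{d}$ and the choice $n = t_{\mathrm{mix}}\lceil\log_2(4/(1-\delta))\rceil$, which is a welcome clarification but not a different argument.
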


\begin{proof}[Proof of Lemma \ref{Lemma Mixing bound}]
    By an upper bound on the mixing time of Markov chains due to Levin and Peres (see Section 4.5 and 4.7 in Levin and Peres \cite{Peres}) there exists a constant $C$ that depends only on $M$ and an integer $N_{\d} \leq C \lceil \log_2 \frac{1}{1-\d}\rceil$ such that $\x M^{N_{\d}} \in B_{\l_1} (\p, 1-\d)$ for every $\d \in [0,1)$. As we assumed that $\d > 1- c_*/2$, we have that $1-\d < c_*/2$, and thus using Lemma \ref{Lemma Thm3 - 2} and Corollary \ref{Corollary 3.1} we obtain that
    \begin{align*}
        \sup_{\x \in \dk}\vert v_{\d}(\x) - v_{\d} (\p)\vert & \leq \sup_{\x \in \dk} \vert v_{\d}(\x) - v_{\d} (\x\, M^{N_{\d}})\vert + \sup_{\x \in \dk} \vert v_{\d}(\xi\, M^{N_{\d}}) - v_{\d} (\p)\vert \\
        & \leq \frac{N_{\d} (1-\d) \Vert u \Vert_{\infty}}{\d} + \frac{2 \Vert u \Vert_{\infty}}{c_*}(1-\d)\\
        & \leq \frac{C\Vert u \Vert_{\infty}}{1-c_{*}/2} \lceil \log_2 \frac{1}{1-\d}\rceil (1-\d) + \frac{2 \Vert u \Vert_{\infty}}{c_*}(1-\d).
    \end{align*}
    As the function $$\d \to \frac{C\Vert u \Vert_{\infty}}{1-c_{*}/2} \lceil \log_2 \frac{1}{1-\d}\rceil (1-\d) + \frac{2 \Vert u \Vert_{\infty}}{c_*}(1-\d) $$
    is of the order of magnitude of the function $\d \mapsto (1-\d)\log_2 \left(\frac{1}{1-\d}\right)$ on $[1-c_*/2,1)$ we complete the proof of the Lemma.
\end{proof}

We are now in position to deduce Theorem \ref{Thm3}. By Eq.\ \eqref{Eq. Uniform Convergence}, Theorems \ref{Thm1} and \ref{Thm2}, and Lemma \ref{Lemma Mixing bound} we have for any $\d > 1 - c_{*}/2$
\begin{align*}
    0\leq v_{\d} (\p) - v_{\infty} & \leq  v_{\d} (\p) - \Psi(\d)\\
                                   & = \sum_{\l \in K} \p^{\l} \cdot (v_{\d} (\p) - v_{\d}(\textbf{e}_{\l}))\\
                                   & \leq O\left((1-\d)\log_2 \left(\frac{1}{1-\d}\right)\right).
\end{align*}
Using the latter and Lemma \ref{Lemma Mixing bound} once more we obtain 
\begin{align*}
    \sup_{\x \in \dk} \vert v_{\d} (\x) -  v_{\infty} \vert & \leq \sup_{\x \in \dk} \vert v_{\d} (\x) -  v_{\d}(\p) \vert + \vert v_{\d} (\p) -  v_{\infty} \vert \\
    & \leq O\left((1-\d)\log_2 \left(\frac{1}{1-\d}\right)\right) + O\left((1-\d)\log_2 \left(\frac{1}{1-\d}\right)\right) \\
    & = O\left((1-\d)\log_2 \left(\frac{1}{1-\d}\right)\right)
\end{align*}
for $\d \in (1-c_{*}/2,1)$, thus completing the proof of Theorem \ref{Thm3}.

\section{Proof of Theorem \ref{Thm RG-1,3}}\label{Sec RG proof}

\subsection{Optimal Stationary Strategies via MDP Reduction}\label{MDP-2}
Similarly to the Markovian persuasion model, our starting point is to reduce the repeated game model to an MDP. Such a reduction builds on known arguments and observations showcased in Renault \cite{Renault}.

The MDP is over the state space $\dk$. The set of actions $\A_{\x}$ at each state $\x \in \dk$ is $\Delta(I)^K$. Namely, the decision maker in the MDP (Player 1 in this case) may assign to each state in $K$ a mixed action over $I$. For each action $\chi \in \Delta(I)^K$, let $\chi^{\l} \in \Delta(I)$ denote the $\l$'th coordinate of $\chi$, where $\l \in K$. Next, for every $\x \in \dk$, $\chi \in \Delta(I)^K$, denote by $\chi(\x) \in \Delta(I)$ the probability measure defined by $\chi(\x)[i] := \sum_{\l \in K} \x^{\l} \cdot \chi^{\l}[i]$ for every $i\in I$. In terms of the original model, $\chi(\x)[i]$ describes the total probability of choosing action $i \in I$, given the belief $\x$ and one-stage strategy $\chi$. In the case where $\chi(\x)[i]>0$ define $\x(\chi,i) \in \dk$ by 
\begin{align*}
    \x(\chi,i)[\l] := \frac{\x^{\l}\cdot \chi^{\l}[i]}{\chi(\x)[i]}, \quad \forall \l \in K.
\end{align*}
If $\chi(\x)[i]=0$ define $\x(\chi,i) \in \dk$ arbitrarily. In terms of the original model, the belief $\x(\chi,i) \in \dk$ corresponds to the posterior belief of Player 2 over $K$ upon observing the action $i$, given that his prior belief was $\x \in \dk$, and given that he knew the one-shot strategy $\chi$ of Player 1. Thus, just in the Markovian persuasion model, given that the posterior belief equals $\x(\chi,i)$, the prior belief in the next stage is shifted to $\x(\chi,i) M$. For this reason we set the stochastic transition rule $\rho$ in the current MDP by
    \begin{align*}
        \rho (\xi,\chi) = \x(\chi,i) M~~ \text{with probability}~~ \chi(\x)[i].
    \end{align*}
The reward function $r$ assigns to each state $\xi$ and action $\chi$, the number $r(\xi,\chi)$, defined by
    \begin{align}\label{Payoff G_x}
        r(\xi,\chi) := \min_{j \in J} \sum_{\l \in K} \x^{\l} \sum_{i \in I} \chi^{\l}[i] G^{\l}(i,j).
    \end{align}
The number $r(\xi,\chi)$ describes the expected stage payoff of Player 1 when Player 2 chooses his optimal response to the one-stage strategy $\chi$ of Player 1 at the belief $\x$. Note that by relation \eqref{RG minimax relation} we may assume that Player 2 knows the strategy $\a \in \A$ of Player 1, and therefore he can play his optimal responses at every stage, based on his prior belief at that stage. 

To summarize in different terms, the MDP reformulates the repeated game as an optimization problem of Player 1 only, where Player 2 is now substituted with a non-strategic device that counters each action of Player 1 with an optimal response at the given state as reflected in \eqref{Payoff G_x}. The pure strategies and mixed strategies in such MDP are defined in the same fashion as in Subsection \ref{Subsec MDP reformulation}.

\bigskip

The MDP reformulation allows us to apply the dynamic programming principle, just as in the Markovian persuasion model. Using such principle (e.g., Theorem 1.22 in \cite{SolanB}) we obtain for every $\x \in \dk$ the following recursive formula for $V_{\d}(\x)$:
\begin{align}\label{Belmann3}
V_{\delta}(\xi)  = \sup_{\chi \in \Delta(I)^K} \bigg\{ (1-\delta)\cdot r(\xi,\chi) + \delta \cdot  \sum_{i \in I} \chi(\x)[i] \cdot V_{\delta}(\x(\chi,i) M) \bigg\}.
\end{align}
The function $V_{\delta}:\dk \to \R$ is known to be concave, and thus continuous. Therefore, by definition, for every $\x \in \dk$ the mapping $\chi \mapsto \chi(\x)[i] \cdot V_{\delta}(\x(\chi,i) M)$ is also continuous on the domain $\Delta(I)^K$. This together with the continuity of the mapping $\chi \mapsto r(\xi,\chi)$ on $\Delta(I)^K$, implies the supremum in Eq.\ \eqref{Belmann3} is attained for every $\x \in \dk$. Hence, for every $\d \in [0,1)$ there exists an optimal stationary strategy of Player 1, which we'll denote by $\hat{\chi}_{\d}$, where $\hat{\chi}_{\d}: \dk \to \Delta(I)^K$ is selected so that
\begin{align*}
    \hat{\chi}_{\d}(\x) \in \underset{\chi \in \Delta(I)^K}{\argmax} \bigg\{ (1-\delta)\cdot r(\xi,\chi) + \delta \cdot  \sum_{i \in I} \chi(\x)[i] \cdot V_{\delta}(\x(\chi,i) M) \bigg\}, \quad \forall \x \in \dk. 
\end{align*}
\bigskip 

To conclude the current discussion, let us define for every $q \in \dk$ and $n\geq 1$, the random variable $\hat{q}_n (q,\d) \in \dk$ by
\begin{align*}
    (\hat{q}_n (q,\d))[\l] = P^q_{\hat{\chi}_{\d},\b} \left( X_n = \l\,|\, i_1,j_1,...,i_{n-1},j_{n-1} \right), \quad \forall \l \in K.
\end{align*}
The random variable $\hat{q}_n (q,\d)$ describes the prior belief of Player 2 over $K$ at the start of stage $n$, given that Player 1 follows the strategy $\hat{\chi}_{\d}$. A well-known fact in the literature on repeated games with incomplete information on one-side is that the definition of  $\hat{q}_n (q,\d)$ is independent of the choice of $\b \in \mathcal{B}$. Moreover, by the definition of $\hat{\chi}_{\d}$, the value of $\hat{q}_n (q,\d)$ is also independent of the previous actions $j_1,...,j_{n-1}$ of Player 2. For every $n\geq 1$, denote by $\hat{\mu}^*_n (\d,q) \in \Delta(\dk)$ the distribution of $\hat{q}_n (q,\d)$:
\begin{align}\label{Optimal Priors Dist}
  \hat{\mu}^*_n (\d,q) (E) := P^q_{\hat{\chi}_{\d},\b}\, (\{\hat{q}_n (q,\d) \in E \}), \quad  \forall ~\text{Borel} ~ E \subseteq \dk.
\end{align}
An important property in regards to such distribution is that $\hat{\mu}^*_n (\d,\x)$ has mean $\x M^{n-1}$, for every $n$ and $\d$. Put differently, $\hat{\mu}^*_n (\d,\x)$ is a split of $\x M^{n-1}$. In particular, for every $n$,  $\hat{\mu}^*_n (\d,\pi)$ is a split of $\pi$, whenever $\pi$ is an invariant distribution of $M$.

\subsection{The Modification of $\G$}\label{Subsec:Gx.Modification}

In order to prove Theorem \ref{Thm RG-1,3} we will generalize the above MDP to a modified version of the game $\G$ (see Subsection \ref{Subsec Stochastic Game with Signals}) in which Player 1 will essentially take the role of the sender in the original version. The modified version of $\G$ will also have $\Omega = \dk \cup \{\pi^*, \pi^{**}\}$ as its state space. However, in the current setup we define the action set $\A_{\x} = \Delta(I)^K$ for every $\x \in \O$. The adversary's binary action set $C = \{0,1\}$ is kept as before.

As it pertains to the payoff function, we define $f: \O \times \Delta(I)^K \times C \to \R$ by \begin{align}\label{Modified G_x payoff}
f(\x,\chi,c) := \min_{j \in J} \sum_{\l \in K} \x^{\l} \sum_{i \in I} \chi^{\l}[i] G^{\l}(i,j).
\end{align}
for every state $\xi \in \dk$ and actions $(\chi,c) \in \Delta(I)^K \times C$. For $\x \in \{\pi^*, \pi^{**}\}$ we set $f(\x, \cdot , \cdot ) = f(\pi, \cdot, \cdot)$. Therefore, $f$ agrees with the reward function $r$ of the MDP described in the previous subsection. 
    
The modified transition rule is described by a lottery $t:\O\times \Delta(I)^K \times C \to \Delta(\O \times \Lambda_1 \times \Lambda_2 )$, where $\Lambda_1$ is the \textit{signal set of Player 1} and $\Lambda_2$ the \textit{signal set of the adversary} in $\G$.\footnote{For the sake of clarity, we note once more that the adversary of Player 1 in $\G$ is not Player 2. As previously discussed, Player 2 is embodied in $\G$ through a device that plays optimal responses in the sense of the payoff function $f$ in \eqref{Modified G_x payoff}.}. The transition rule is formally defined by
    \begin{align*}\label{transition rule}
t(\x,\chi,0) & = \left\{
       \begin{array}{ll}
        (\pi^*,\pi^*,\emptyset) , & \hbox{with prob.\,\,}\,\, x\\ 
        (\x(\chi,i) M, \x(\chi,i) M, \emptyset) , & \hbox{with prob.\,\,}\,\, (1-x)\cdot \chi(\x)[i], \, i \in I
       \end{array}
     \right.
     \\ 
     \text{and}
     \\
t(\x,\chi,1) & = \left\{
       \begin{array}{ll}
        (\pi^*,\pi^*,\emptyset) , & \hbox{with prob.\,\,}\,\, x\\ 
        (\pi^{**},\pi^{**},\emptyset) , & \hbox{with prob.\,\,}\,\, (1-x)
       \end{array}
     \right.,
\end{align*}
for every $\x \in \O$, every $\chi \in \Delta(I)^K$, and $i\in I$, where as before we use the convention that $\x(\chi,i) = \pi(\chi,i)$ for every $\x \in \{\pi^*, \pi^\}$. As in the original version of $\G$, we may take $\Lambda_1 = \O$ and $\Lambda_2 = \emptyset$.

The play description, strategies, $N$-Ces\`aro payoffs, and the motivation behind the modified version of $\G$, remain exactly the same as they were in the original version of $\G$ (see  Subsections \ref{Play of G_x}, \ref{Motivation to G_x}, and \ref{Strategies in G_x}). 

\subsubsection{The Key Role of the Pre-Play Communication in Regards to $\G$}

Up to this point, the possibility of pre-play communication did not play a role in neither the formal  description of the MDP in Subsection \ref{MDP-2} nor in the modification of $\G$ given in Subsection \ref{Subsec:Gx.Modification}. However, the pre-play communication will have a key role, as it will allow Player 1 at any stage $n$ of $\G$, to `split' the state $\x_n \in \O$ prior to taking her $n$'th action $\chi_n$. Such a split embodies the Bayesian update of Player 2's belief over $K$, upon obtaining an informative message from Player 1 via the pre-play communication. In practice, the pre-play communication will serve as the `device' that will allow Player 1 to perform the `memory restorations' in the modified version of $\G$.

In further detail, assume that at state $\x \in \O$ of $\G$, Player 1 first splits $\x$ to $\mu$ using the pre-play communication and thereafter takes the action $\chi \in \Delta(I)^K$.\footnote{We allow $\mu \in \Delta(\Delta(K))$ to be any countable distribution  having mean $\x$. 
} In such case, the action $\chi$ may depend on the element $p \in \text{supp}(\mu)$ which was realized at the pre-play communication.\footnote{For every countably supported probability measure measure $\mu$, denote by $\text{supp}(\mu)$ the support of $\mu$.} Therefore, in such case we write $\chi = \{ \chi\{p\} \in \Delta(I)^K\,:\, p \in \text{supp}(\mu)\}$. Subsequently, the payoff definition on the right-hand side of \eqref{Modified G_x payoff} takes the form
\begin{align*}
    \sum_{p \in \text{supp}(\mu)} \mu[p] \cdot \min_{j \in J} \sum_{\l \in K} p^{\l} \sum_{i \in I} (\chi\{p\})^{\l}\,[i] \, G^{\l}(i,j),
\end{align*}
whereas the transition rule takes the form 
 \begin{align*}
t(\x,\chi,0) & = \left\{
       \begin{array}{ll}
        (\pi^*,\pi^*,\emptyset) , & \hbox{with prob.\,\,}\,\, x\\ 
        (p(\chi\{p\},i) M, p(\chi\{p\},i) M, \emptyset) , & \hbox{with prob.\,\,}\,\, (1-x)\cdot \mu [p]\cdot (\chi\{p\})(\x)[i]
       \end{array}
     \right..
     \end{align*}

     
\subsection{The Deduction of Theorem \ref{Thm RG-1,3}}

We begin with the first item of Theorem \ref{Thm RG-1,3}. The plan of the proof coincides with that given in Subsection \ref{Plan of the Proof} with the exception that $v_{\d}$ is substituted with $V_{\d}$. The probabilistic framework required for the proof is the same as the one described in Subsection \ref{Subsec Probabilistic Framework}. As for the proof of the parallel version of Proposition \ref{prop1} for the modified version of $\G$, the main modification is in the update required for the definition of $\theta^* \in \Theta$ given in Subsection \ref{Subsec Prop 1}. In the current setup, for each integer $m \geq 0$ and integer $0\leq l < \k_{m+1}$, $\theta^*$ prescribes Player 1 to play at stage $\k_0 \oplus \k_m +l$ as follows:
\begin{itemize}
    \item If $\o_{\k_0 \oplus \k_m +l} \neq \pi^{**}$, follow the stationary strategy $\hat{\chi}_{1-x}$ described in Subsection \ref{MDP-2}. That is, take the action $\hat{\chi}_{1-x} (\o_{\k_0 \oplus \k_m +l})$. Note that for $l=0$ it is always the case that $\o_{\k_0 \oplus \k_m +l} = \pi^* \neq \pi^{**}$.
    \item If $\o_{\k_0 \oplus \k_m +l} = \pi^{**}$, first use pre-play communication to split $\o_{\k_0 \oplus \k_m +l}$ to the distribution $\hat{\mu}^*_{l+1} (1-x,\pi)$ described in \eqref{Optimal Priors Dist}. Second, given that $p \in \text{supp}(\hat{\mu}^*_{l+1} (1-x,\pi))$ was realized, take that action $\hat{\chi}_{1-x} (p)$.
\end{itemize}
In this scenario, $\theta^*$ is meant to `restore Player 2's memory' whenever it is erased by the adversary. It does so by splitting $\pi^{**}$ to $\hat{\mu}^*_{l+1} (1-x,\pi)$, being the distribution of Player 2's belief over $K$ at the start of the $(l+1)$'st stage of the repeated game, when Player 1 follows the stationary strategy $\hat{\chi}_{1-x}$. Thus, similarly to the case in the original proof, $\theta^*$ turns the game along stages $\k_0 \oplus \k_m,..., \k_0 \oplus \k_{m+1}-1$ to a random-duration repeated game between Player 1 and 2, and prescribes Player 1 to play optimally in such game to guarantee $V_{1-x}(\pi)/x$. 

In formal terms, we may now restate and reprove Claim \ref{Claim prop1} as follows. \begin{claim}\label{Claim prop1 revised}
 For each integer $m \geq 0$ and $\t \in \T$ we have
\begin{equation*}
E^x_{\theta^{*},\t} \left(\sum_{i = \k_0 \oplus \k_m}^{\k_0 \oplus \k_{m+1} - 1} f(\o_i,\chi_i,c_i)  \right) = \frac{V_{1-x}(\pi)}{x}.
\end{equation*}
\end{claim}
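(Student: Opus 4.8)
\textbf{Plan of the proof of Claim \ref{Claim prop1 revised}.}
The approach mirrors the proof of Claim \ref{Claim prop1} in the Markovian persuasion setting, with the posterior distributions $\mu^*_{n}(1-x,\pi)$ replaced by the prior distributions $\hat{\mu}^*_{n}(1-x,\pi)$ from \eqref{Optimal Priors Dist}, and the function $u$ replaced by the reward function $r$ of the MDP of Subsection \ref{MDP-2}. First I would condition on the value of $\k_0 \oplus \k_m$, say $\k_0 \oplus \k_m = l$. On the event $\{\k_0 \oplus \k_m = l\}$ we have $\o_l = \pi^*$ by the definition of $\k_m$, and then for $0 \le i < \k_{m+1}$ the state $\o_{l+i}$ equals either a point of $\dk$ (if the adversary did not move at that stage) or $\pi^{**}$ (if he did). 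By the construction of $\theta^*$, in either case Player 1's behaviour at stage $l+i$ reproduces exactly the distribution of the pair (belief over $K$, mixed action) that arises at stage $i+1$ of the repeated game started from prior $\pi$ under the stationary strategy $\hat{\chi}_{1-x}$: if $\o_{l+i}\neq\pi^{**}$ this is because the transition rule of $\G$ pushes the realized posterior $p(\hat\chi_{1-x}(\,\cdot\,),i')$ forward by $M$ exactly as in the MDP, and if $\o_{l+i}=\pi^{**}$ the pre-play split to $\hat\mu^*_{i+1}(1-x,\pi)$ followed by $\hat\chi_{1-x}(p)$ restores precisely that same conditional law. Hence the conditional distribution of $f(\o_{l+i},\chi_{l+i},c_{l+i})$ given $\{\k_0\oplus\k_m = l\}$ coincides with the distribution of $r$ evaluated at the $(i+1)$'st state--action pair of the MDP run from $\pi$ under $\hat\chi_{1-x}$, and in particular its expectation equals $\int_{\dk} r(\,\cdot\,)$ integrated against the law of that state--action pair, which by the definition of $V_{1-x}$ has expectation $E_{\pi,\hat\chi_{1-x}}\big[r(\hat q_{i+1},\hat\chi_{1-x}(\hat q_{i+1}))\big]$.

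Next I would combine this with the fact, already used in Subsection \ref{Subsec Probabilistic Framework}, that under any $(\theta,\t)$ the random variable $\k_{m+1}$ is geometric with parameter $x$ and independent of the play within the block (the adversary's moves and chance's moves before the block do not affect the within-block dynamics of $\theta^*$, and $\k_{m+1}$ counts only chance's next erasure). Therefore, writing $q = \k_{m+1}$,
\begin{align*}
E^x_{\theta^*,\t}\!\left(\sum_{i=\k_0\oplus\k_m}^{\k_0\oplus\k_{m+1}-1} f(\o_i,\chi_i,c_i)\,\Bigg|\,\k_0\oplus\k_m\right)
&= \sum_{n=1}^{\infty} x(1-x)^{n-1}\sum_{i=0}^{n-1} E_{\pi,\hat\chi_{1-x}}\big[r(\hat q_{i+1},\hat\chi_{1-x}(\hat q_{i+1}))\big]\\
&= \frac{1}{x}\sum_{i=0}^{\infty} x(1-x)^{i}\, E_{\pi,\hat\chi_{1-x}}\big[r(\hat q_{i+1},\hat\chi_{1-x}(\hat q_{i+1}))\big],
\end{align*}
where the last equality interchanges the order of summation exactly as in the proof of Claim \ref{Claim Random Duration}. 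The inner series is $\tfrac{1}{x}$ times the $(1-x)$-discounted payoff of the MDP from $\pi$ under the optimal stationary strategy $\hat\chi_{1-x}$, which equals $V_{1-x}(\pi)$. Hence the conditional expectation is $V_{1-x}(\pi)/x$, and taking expectations over $\k_0\oplus\k_m$ yields the claim.

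\textbf{Main obstacle.} The delicate point is justifying that $\theta^*$ genuinely reproduces the MDP dynamics from $\pi$ within each block despite the adversary's interference. Concretely, one must check that a $\pi^{**}$-visit at the $i$'th stage of a block, followed by the prescribed pre-play split to $\hat\mu^*_{i+1}(1-x,\pi)$ and the action $\hat\chi_{1-x}(p)$ on the realized atom $p$, produces the same joint law of (Player 2's belief, chosen action, next-stage belief) as would have occurred had no erasure taken place and Player 1 simply continued $\hat\chi_{1-x}$ — this is where the countable support of $\hat\mu^*_{i+1}(1-x,\pi)$ and the availability of unrestricted pre-play communication (Subsection \ref{Subsec:Gx.Modification}) are essential, since $\hat\mu^*_{i+1}(1-x,\pi)$ may be supported on up to $|I|^{i}$ beliefs. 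Once this `memory restoration' identity is in place, the argument that the sender cannot be helped by conditioning the within-block play on the pre-block history (because the $(1-x)$-discounted MDP value is attained at a pure stationary strategy) is the same as in Claim \ref{Claim Upper Bound within R-D block}, and the remaining geometric-series manipulation is routine.
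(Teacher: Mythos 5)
Your proof is correct and takes essentially the same route as the paper's: condition on $\k_0 \oplus \k_m$, argue that $\theta^*$ reproduces within each block exactly the law of the MDP run from $\pi$ under $\hat{\chi}_{1-x}$ (with the pre-play split to $\hat{\mu}^*_{l+1}(1-x,\pi)$ handling the $\pi^{**}$ visits), and then perform the Claim~\ref{Claim Random Duration} geometric-series interchange to identify the result with $V_{1-x}(\pi)/x$. The paper's proof is considerably terser and actually contains a small typographical slip in the displayed formula (the index $l$ appears where the summation index $n$ should be), whereas your computation has the indices consistent; the discussion in your ``main obstacle'' paragraph correctly identifies exactly what the paper leaves implicit.
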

\begin{proof}[Proof of Claim \ref{Claim prop1 revised}]
By the definition of $\theta^*$, $f$, and the optimality of $\hat{\chi}_{1-x}$, as well as the proof details of Claim \ref{Claim Random Duration} we have
\begin{align*}
    & E^x_{\theta^{*},\t} \left(\sum_{i = \k_0 \oplus \k_m}^{\k_0 \oplus \k_{m+1} - 1} f(\o_i,\chi_i,c_i) \, \Bigg|\, \k_0 \oplus \k_m \right)\\
    & = \frac{1}{x}  \sum_{n=1}^{\infty} x\cdot (1-x)^{n-1}  \sum_{p \in \text{supp}(\hat{\mu}^*_{l+1} (1-x,\pi))} \hat{\mu}^*_{l+1}[p] \cdot \min_{j \in J} \sum_{\l \in K} p^{\l} \sum_{i \in I} (\hat{\chi}_{1-x}\{p\})^{\l}\,[i] \, G^{\l}(i,j)\\
    & = \frac{V_{1-x}(\pi)}{x}.
\end{align*}
The above is sufficient to deduce the proof of the claim. 
\end{proof}
Upon establishing the above claim, the proof of the modified version of Proposition \ref{prop1} continues in identical fashion with the slight modification that $\Vert u \Vert_{\infty}$ is replaced with $\Vert G \Vert_{\infty} = \max_{\l,i,j} G^{\l}[i,j]$. This allows one to obtain that  $\gamma_N^x (\theta^*,\t) \geq V_{1-x}(\pi) - o(1)$. 

In addition, the proof of Proposition \ref{prop2} remains valid for the modified version of $\G$ (with the replacement of $\Vert u \Vert_{\infty}$ by $\Vert G \Vert_{\infty}$ and $v_{\d}$ by $V_{1-\d}$), and we obtain that  $\gamma_N^x (\theta,\t^y) \leq V_{1-y}(\pi) + o(1)$, where $\t^y$ is defined in the same way as in \ref{Subsec Prop 2}. Therefore, just as described in Subsection \ref{Plan of the Proof}, one deduces that $\d \mapsto V_{\d}(\pi)$ is non-increasing on $[0,1)$.

As it pertains to the proof of the second item of Theorem \ref{Thm RG-1,3}, we set $V_{\infty} = \lim_{\d \to1^-}  V_{\d}(\p)$, which must exist by the first item of Theorem \ref{Thm RG-1,3}. The proof then follows entirely along the lines of the proof of Theorem \ref{Thm3}, building on the concavity of $V_{\d}$, and replacing $\Vert u \Vert_{\infty}$ by $\Vert G \Vert_{\infty}$ throughout. The only required modification refers to the proof of Lemma \ref{Lemma Thm3 - 1}.  

Specifically, to obtain that $V_{\d}(\x M) \geq V_{\d} (\x) - \frac{(1-\d)}{\d} \Vert G \Vert_{\infty}$, consider the case where the prior is $q_1 = \x M$ and define $\a^* \in \A$ as follows:
    \begin{itemize}
        \item At the first stage split $\x M$ to $\hat{\mu}^*_2 (\d,\x)$ using pre-play communication, where $\hat{\mu}^*_2 (\d,\x)$ was described in \eqref{Optimal Priors Dist}. Then, given that $p \in supp(\hat{\mu}^*_2 (\d,\x))$ was realized, take the optimal action $\hat{\chi}_{\d}(p)$.
        \item Starting from the second stage on follow the optimal stationary strategy $\hat{\chi}_{\d}$.
    \end{itemize}
That is, $\a^*$ is intended to ensure Player 1 the stage payoffs from the second stage on when the prior is $\x$. It does so, by splitting $\x M$ to $\hat{\mu}^*_2 (\d,\x)$, which would have been the prior belief of Player 2 at the second stage when Player 1 follows his optimal strategy $\hat{\chi}_{\d}$ and the repeated game starts from the prior $\x$. Using this definition, one may deduce based on parallel arguments to those in the proof of Lemma \ref{Lemma Thm3 - 1} that $V_{\d}(\x M) \geq V_{\d} (\x) - \frac{(1-\d)}{\d} \Vert G \Vert_{\infty}$. The reverse inequality is achieved by parallel arguments to those given in Eq.\ \eqref{Eq. 3.1}. This concludes the modification to Lemma \ref{Lemma Thm3 - 1} and thus completes the proof.

\appendix
\addcontentsline{toc}{section}{Appendices}
\section*{Appendices}

\section{Case Studies and Calculations}\label{Appendix A}

The following Lemma addresses a selection property of the optimal stationary signaling policy $\chi_{\d}^*$ defined in Subsection \ref{Sub-sub sec Functional Equation}. 

\begin{lemma}\label{lm max}
Assume that $\x\in \dk$ satisfies $u(\x) = (\text{Cav}\, u) (\x)$. Then, for any $\delta \in [0,1)$, $\chi_{\d}^*$ may be selected so that it reveals no information at $\x$.
\end{lemma}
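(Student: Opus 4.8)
The plan is to show that, under the hypothesis $u(\x)=\cavu(\x)$, the non-revealing split $\chi=(\a_1,\x_1)=(1,\x)\in\mathcal{A}_\x$ already attains the supremum on the right-hand side of the Bellman equation \eqref{Belmann2} at $\x$. That split contributes exactly $f_{\d}(\x)$ to \eqref{Belmann2}, where $f_{\d}:=(1-\d)\cdot u+\d\cdot(v_{\d}\circ M)$, and since $v_{\d}=(\text{Cav}\, f_{\d})\ge f_{\d}$ everywhere, it suffices to establish the reverse inequality $v_{\d}(\x)\le f_{\d}(\x)$ whenever $u(\x)=\cavu(\x)$.

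The key step is the pointwise bound
\[
v_{\d}(\eta)\ \le\ (1-\d)\cdot\cavu(\eta)+\d\cdot v_{\d}(\eta M),\qquad\forall\,\eta\in\dk,\ \d\in[0,1).
\]
To obtain it, I would start from $u\le\cavu$, which gives $f_{\d}\le h$ for $h:=(1-\d)\cdot\cavu+\d\cdot(v_{\d}\circ M)$. The function $h$ is concave: $\cavu$ is concave by definition; $v_{\d}$ is concave (as recorded right after \eqref{Belmann2}), hence so is the composition $v_{\d}\circ M$ with the affine map $\eta\mapsto\eta M$; and a convex combination of concave functions is concave. Since $\text{Cav}(\cdot)$ is monotone and leaves concave functions fixed, $v_{\d}=(\text{Cav}\, f_{\d})\le(\text{Cav}\, h)=h$, which is precisely the displayed bound. (The same inequality also admits a transparent probabilistic reading: for any $\s\in\Sigma$ with prior $\eta$, peel off the first stage to write $\gamma_{\d}(\eta,\s)=(1-\d)E_{\eta,\s}[u(p_1)]+\d\,E_{\eta,\s}\big[(1-\d)\sum_{n\ge1}\d^{n-1}u(p_{n+1})\big]$, bound $E_{\eta,\s}[u(p_1)]\le E_{\eta,\s}[\cavu(p_1)]\le\cavu(\eta)$ by Jensen since $p_1$ is a split of $\eta$, bound the stage-$2$ continuation by $v_{\d}(\eta M)$ via the dynamic programming principle together with Jensen applied to the concave map $v_{\d}\circ M$, and take the supremum over $\s$.)

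Granting the bound, I would apply it at $\eta=\x$ and invoke the hypothesis $\cavu(\x)=u(\x)$ to get $v_{\d}(\x)\le(1-\d)\cdot u(\x)+\d\cdot v_{\d}(\x M)=f_{\d}(\x)$; combined with $v_{\d}(\x)=(\text{Cav}\, f_{\d})(\x)\ge f_{\d}(\x)$ this forces $v_{\d}(\x)=f_{\d}(\x)$. Substituting back into \eqref{Belmann2} shows the non-revealing split at $\x$ is optimal — an optimal split exists by the upper semi-continuity argument of Subsection \ref{Sub-sub sec Functional Equation} — so $\chi_{\d}^*$ may be selected to reveal no information at $\x$, as claimed. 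I do not foresee a real obstacle: the only points needing a moment's care are the routine facts that $h$ is concave and that concavification is monotone and fixes concave functions, and, in the probabilistic variant, the standard appeal to dynamic programming to control the continuation payoff by $v_{\d}(\x M)$.
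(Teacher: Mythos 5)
Your proposal is correct and is essentially the paper's own argument: your auxiliary function $h=(1-\d)\cavu+\d\,(v_{\d}\circ M)$ is precisely the $g_{\d}$ the paper introduces, and the two-sided squeeze ($v_{\d}=(\text{Cav}\,f_{\d})\geq f_{\d}$ from below, $v_{\d}=(\text{Cav}\,f_{\d})\leq g_{\d}$ from above since $g_{\d}$ is concave and dominates $f_{\d}$, with $f_{\d}(\x)=g_{\d}(\x)$ under the hypothesis) is exactly the paper's reasoning. The additional probabilistic reading you sketch in parentheses is a fine alternative but not needed.
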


\begin{proof}[Proof of Lemma \ref{lm max}]
    Recall that we have shown in Subsection \ref{Sub-sub sec Functional Equation} that 
    $v_{\d} = (\text{Cav} f_{\d})$, where $f_{\d} := (1-\d) \cdot u + \d \cdot (v_{\d}\circ M)$. Note that by the concavity of $(v_{\d}\circ M)$ on $\dk$, we have that the function $g_{\d} := (1-\d) \cdot(\text{Cav}\, u) + \d \cdot (v_{\d}\circ M)$ is concave on $\dk$ as well. As $f_{\d} \leq g_{\d}$ we obtain that $v_{\d} = (\text{Cav} f_{\d}) \leq g_{\d}$ on $\dk$. On the other hand, as the sender can reveal no information at $\x$,  the recursive formula in \eqref{Belmann2} implies that
    \begin{align*}
        v_{\delta}(\xi) & = \sup_{\chi = \{(\a_s,\x_s)\} \in \mathcal{A_{\xi}}} \sum_{s \in S} \a_s \cdot \bigg\{ (1-\d)\cdot u + \d \cdot ( v_{\d} \circ M )  \bigg\} (\x_s)\\
        & \geq \bigg\{ (1-\d)\cdot u + \d \cdot ( v_{\d} \circ M )  \bigg\} (\x)\\
        & = \bigg\{ (1-\d)\cdot (\text{Cav}\,u) + \d \cdot ( v_{\d} \circ M )  \bigg\} (\x) = g_{\d} (\x).      
    \end{align*}
    Combining the two we obtain that 
    \begin{align*}
        \d_{\{\x\}} \in \text{arg\, max}_{\chi = \{(\a_s,\x_s)\} \in \mathcal{A_{\xi}}} \sum_{s \in S} \a_s \cdot \bigg\{ (1-\d)\cdot u + \d \cdot ( v_{\d} \circ M )  \bigg\} (\x_s).
    \end{align*}  
    Therefore, the optimal stationary strategy $\chi_{\d}^*$ may be chosen to satisfy $\chi_{\d}^*(\x) = \d_{\{\x\}}$, i.e., $\chi_{\d}^*$ splits $\x$ to itself.
\end{proof}

We now move on to examples of the possible phenomena presented in Figure \ref{figure 1}. First, by Theorem \ref{Thm3} in the case where $M$ is ergodic (i.e., irreducible and aperiodic), $v_{\delta}$ converge uniformly (over $\dk$) to a constant function. Therefore, in such case the relations between  $\Phi$ and $\Psi$ obey either Case A or Case B in Figure \ref{figure 1}. Moreover, both of those cases may occur. Trivially, for constant $u$, we have case B where $\d_0$ described in Corollary \ref{Cor1} equals to $0$. As an example for the possibility of case A, consider $K=\{1,2\}$ and let $M$ be defined by
\begin{equation*} 
M=\begin{pmatrix}
  \ 1/2& 1/2\ \\
 \ 1/6 & 5/6\
\end{pmatrix}.
\end{equation*}
so that $\pi_M = (0.25,0.75)$. Also, let $u((p,1-p)) := p(2-3|p-\frac{1}{2}|)+(1-p)p/10$ where $p \in [0,1]$. It was shown in an earlier working paper version\footnote{See Example 4 on p.\ 17-18 in \href{https://www.math.tau.ac.il/~lehrer/Papers/Markovian\%20Persuasion.pdf}{https://www.math.tau.ac.il/~lehrer/Papers/Markovian\%20Persuasion.pdf}} of Lehrer and Shaiderman \emph{\cite{MP}} that 
\begin{align*}
& v_{\d}((p,1-p)) =\\ & = \left\{
       \begin{array}{ll}
        2.05\left(3(\d-1)p - \d/2\right)/(\d -3) , & \hbox{for\,\,}\,\, p \in [0,0.5],\\ 
        (1-\d)u((p,1-p)) + \d\cdot 2.05\left((\d-1)p - 1/2\right)/(\d -3) , & \hbox{for\,\,}\,\, p \in [1/2,1].
       \end{array}
     \right.
\end{align*}
In such a case it can be verified that $\Phi > \Psi$ on $[0,1)$, thus suiting Case A. Moreover, the function $\Psi$ is strictly increasing on $[0,1)$, whereas $\Phi$ is constant and equals $\cavu(\p)$ on $[0,1)$. 

As it turns out, Case C in Figure \ref{figure 1} is possible as well. Indeed, take the periodic irreducible matrix $M$ defined by
\begin{equation*}
  M=\begin{pmatrix}
\ 0& 1\ \\
 \ 1 & 0\
\end{pmatrix},
\end{equation*}
and take $u((p,1-p)) = p(1-p)$ for every $p \in [0,1]$. By the definition of $M$ and $u$ we have that $\Psi(\delta)=0$ for every $\delta$. Nevertheless, since $\pi_M = (0.5,0.5)$, and $u$ is concave, it follows from Lemma \ref{lm max} that $\Phi(\delta) =  u(\pi_M) =0.25$ for every $\delta \in [0,1)$.

\section{Sorin's Proof}\label{Appendix B}

The current section provides a detailed version of Sorin's proof for the Aumann Maschler model, which corresponds to the case where $M = \text{Id}_k$ in the framework of Subsection \ref{Subsec Repeated Game}. Assume that the game starts from the prior $q \in \dk$.

For any sequence of numbers $\{a_n\,:\, n\geq 1\}$ and $0< \lambda \leq 1$ define:
\begin{equation*}
    F_\lambda^m(\{a_n\}) := \sum_{n=1}^{\infty} \lambda (1 - \lambda)^{n-1} a_{m+n}.
\end{equation*}
Note that $F_\lambda^m(\{a_n\})$ equals to the $(1-\lambda)$-discounted valuation of the shifted sequence $\{a_{m+1},a_{m+2},...\}$. The key algebraic property used by Sorin is the following: Assume that $0<\mu < \lambda\leq 1$. Then one has that 
\begin{align}\label{Eq. Algebraic Relation}
F_\mu^0(\{a_n\}) = \frac{\mu}{\lambda} F_\lambda^0(\{a_n\}) + \frac{\mu}{\lambda} (\lambda - \mu) \sum_{m=0}^{\infty} (1 - \mu)^m F_\lambda^{m+1}(\{a_n\}).
\end{align}
In particular, the $(1-\mu)$-discounted valuation of $\{a_n\}$ can be decomposed to a convex combination of the $(1-\lambda)$-discounted valuations of the shifted sequences $\{(a_m,a_{m+1},...)\,|\, m \in \N \}$.

Next, let $\b_{\lambda} (q) \in \mathcal{B}$ denote an optimal strategy for Player 2 in the game with discount factor $(1-\lambda)$. 

\begin{lemma}\label{Claim Sorin}
    Let $\a \in \mathcal{A}$. Define $a_n := E^q_{\a,\b_{\lambda} (q)}\left[ G^{X_n}(i_n, j_n) \right]$ for every $n\geq 1$. Then for every $m\geq 0$ it holds that:
    \begin{align*}
        F_\lambda^m(\{a_n\})  \leq V_{1-\lambda}(q).
    \end{align*}
\end{lemma}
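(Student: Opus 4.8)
\emph{Plan of proof.} The idea is to identify $F_\lambda^m(\{a_n\})$ with the expected $(1-\lambda)$-discounted payoff of the continuation game that begins at stage $m+1$, to bound that by the \emph{value} of the continuation game, and then to pull this back to $V_{1-\lambda}(q)$ using the concavity of $V_{1-\lambda}$. The place where the hypothesis $M=\text{Id}_k$ of the Aumann and Maschler model enters is that, the state then being frozen, the continuation after any history is literally a copy of the original game with the prior replaced by Player~2's current posterior.

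First, by Fubini's theorem and the definition of the $a_n$,
\[
F_\lambda^m(\{a_n\}) \;=\; E^q_{\a,\b_\lambda(q)}\!\left[\lambda\sum_{n=1}^{\infty}(1-\lambda)^{n-1}G^{X_{m+n}}(i_{m+n},j_{m+n})\right].
\]
For $m=0$ the right-hand side is exactly the $(1-\lambda)$-discounted payoff of the pair $(\a,\b_\lambda(q))$, which is at most $V_{1-\lambda}(q)$ by optimality of $\b_\lambda(q)$. For $m\ge 1$ I would take $\b_\lambda(q)$ to be an optimal strategy of the dynamic-programming form coming from the recursive formula \eqref{Belmann3} with $M=\text{Id}_k$ (equivalently, from the MDP reformulation of Subsection~\ref{MDP-2}); such a strategy is belief-based and optimal from every prior, so after any history $h=(i_1,j_1,\dots,i_m,j_m)$ reachable under $(\a,\b_\lambda(q))$ its continuation $\b_\lambda(q)[h]$ is again optimal, now for the $(1-\lambda)$-discounted game whose prior is Player~2's posterior $q^{(h)}:=P^q_{\a,\b_\lambda(q)}\big(X_{m+1}=\cdot\mid h\big)$. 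Conditioning on $h$ and on the (frozen) state $X_1$, the conditional expectation of the bracketed quantity above equals the $(1-\lambda)$-discounted payoff of $(\a[h],\b_\lambda(q)[h])$ in the continuation game started from $q^{(h)}$, and this is $\le V_{1-\lambda}(q^{(h)})$ by optimality of $\b_\lambda(q)[h]$. Taking expectations over $h$,
\[
F_\lambda^m(\{a_n\}) \;\le\; E^q_{\a,\b_\lambda(q)}\big[V_{1-\lambda}(q_{m+1})\big],
\]
where $q_{m+1}$ is the random belief $q^{(h)}$ with $h$ the length-$m$ history. Since the state is constant, $(q_n)_{n\ge1}$, with $q_n:=q^{(i_1,j_1,\dots,i_{n-1},j_{n-1})}$, is a Doob martingale under $P^q_{\a,\b_\lambda(q)}$, so $E^q_{\a,\b_\lambda(q)}[q_{m+1}]=q$; as $V_{1-\lambda}$ is concave on $\dk$, Jensen's inequality gives $E^q_{\a,\b_\lambda(q)}\big[V_{1-\lambda}(q_{m+1})\big]\le V_{1-\lambda}(q)$, which is the claim.

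The step requiring care — the only real obstacle — is the continuation-optimality used above: that the restriction of $\b_\lambda(q)$ to the subgame after a history $h$ is optimal for the subgame whose prior is the posterior $q^{(h)}$. I would obtain this by selecting $\b_\lambda(q)$ through the recursive characterization of $V_{1-\lambda}$: stage by stage, Player~2 plays a minimax action in the one-shot game dictated by the current belief, which produces a belief-based strategy optimal from every prior and hence with optimal restriction to every subgame; here $M=\text{Id}_k$ is essential, since it makes the post-$h$ situation an exact copy of the original game with the updated prior. The remaining ingredients — Fubini's theorem, the tower property yielding the belief martingale, and Jensen's inequality — are routine.
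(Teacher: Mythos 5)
Your approach is genuinely different from the paper's. The paper avoids any claim about Player~2 possessing a subgame-consistent optimal strategy: it constructs an explicit ``restart'' strategy $\b^*$ (follow $\b_\lambda(q)$ for $m$ stages, then erase memory and re-initialise $\b_\lambda(q)$), uses the stationarity $q M^m = q$ to argue that the continuation under $\b^*$ is a genuine copy of the game from prior $q$, and then derives a contradiction by pairing $\b^*$ against Player~1's optimal $\a_{1-\lambda}(q)$. Your argument instead tries to bound the shifted payoff directly by $E^q_{\a,\b_\lambda(q)}\bigl[V_{1-\lambda}(q_{m+1})\bigr]$ and then invoke Jensen on the belief martingale.

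The step that fails is the continuation-optimality claim you flag yourself as ``the only real obstacle'': that $\b_\lambda(q)$ can be selected so that its restriction $\b_\lambda(q)[h]$ after a length-$m$ public history $h$ is an optimal strategy for the $(1-\lambda)$-discounted game with prior $q^{(h)}$. This cannot be arranged in general. The posterior $q^{(h)} = P^q_{\a,\b_\lambda(q)}(X_{m+1}=\cdot\mid h)$ depends on the informed player's strategy $\a$, whereas $\b_\lambda(q)[h]$ is a single fixed continuation strategy that may not depend on $\a$. For a fixed public history $h$, one choice of $\a$ may make $q^{(h)}$ a Dirac mass while another (non-revealing) $\a$ leaves $q^{(h)}=q$; the same $\b_\lambda(q)[h]$ would then have to be optimal simultaneously for incompatible priors, which is impossible in general. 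The MDP reformulation of Subsection~\ref{MDP-2} that you appeal to is a one-player problem for \emph{Player~1}: it produces the belief-stationary strategy $\hat{\chi}_{\d}$ of the informed maximiser, but it says nothing about the structure of Player~2's minimax-optimal $\b_\lambda(q)$, and there is no reason for $\b_\lambda(q)$ to be belief-based (indeed, Player~2 cannot even compute the belief without knowing $\a$). Consequently the inequality $F_\lambda^m(\{a_n\}) \le E^q_{\a,\b_\lambda(q)}[V_{1-\lambda}(q_{m+1})]$ is not established, and the appeal to Jensen, which is correct in itself, has nothing to act on. The paper's restart construction is precisely the device that sidesteps this: by having Player~2 forget the history, the continuation from stage $m+1$ is literally a game from the prior $q$ (using $q M^m = q$), so only optimality of $\b_\lambda(q)$ at the original prior is ever needed, not optimality at the $\a$-dependent posteriors.
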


\begin{proof}[Proof of Lemma \ref{Claim Sorin}]
    For $m=0$ the inequality is immediate from the optimality of $\b_{\lambda} (q)$, and the definition of  $F_\lambda^0$ and $\{a_n\}$. Next, consider the case where $m\geq 1$ and assume by contradiction that $F_\lambda^m(\{a_n\})  > V_{1-\lambda}(q)$. Consider the following strategy $\b^* \in \mathcal{B}$ of Player 2: along stages $1,...,m$ follow $\b_{\lambda} (q)$; then, starting from stage $m+1$, forget all previous information, and start playing $\b_{\lambda} (q)$ anew. 

    By the stationarity of the underlying dynamics of the game we have that:
    \begin{align}\label{Eq. Sorin}
       E^q_{\a,\b^*}\left[ \sum_{n=1}^{\infty} \lambda (1-\lambda)^{n-1}  G^{X_{m+n}}(i_{m+n}, j_{m+n}) \right] \leq V_{1-\lambda}(q) < F_\lambda^m(\{a_n\}). 
    \end{align}
    On the other hand, as by definition $a_n = E^q_{\a,\b^*}\left[ G^{X_n}(i_n, j_n) \right]$ for every $n=1,...,m$ we obtain that with the help of \eqref{Eq. Sorin} that
    \begin{align*}
        & E^q_{\a,\b^*}\left[ \sum_{n=1}^{\infty} \lambda (1-\lambda)^{n-1}  G^{X_{n}}(i_{n}, j_{n}) \right]  = F_\lambda^0(\{a_n\})\\
        & \quad = \sum_{n=1}^m \lambda (1-\lambda)^{n-1} a_n + (1-\lambda)^{m} F_\lambda^m(\{a_n\})\\
        & \quad  > \sum_{n=1}^m \lambda (1-\lambda)^{n-1} E^q_{\a,\b^*}\left[ G^{X_n}(i_n, j_n) \right]\\
        & \quad \quad + (1-\lambda)^{m} E^q_{\a,\b^*}\left[ \sum_{n=1}^{\infty} \lambda (1-\lambda)^{n-1}  G^{X_{m+n}}(i_{m+n}, j_{m+n}) \right]\\
        & \quad = E^q_{\a,\b^*}\left[ \sum_{n=1}^{\infty} \lambda (1-\lambda)^{n-1}  G^{X_{m+n}}(i_{n}, j_{n}) \right].
    \end{align*}
    As the inequality is strict, and holds for every $\a \in \mathcal{A}$, it in particular holds for the optimal strategy $\a_{1-\lambda}(q)$ of Player 1 in the game with discount factor $1-\lambda$. For such a strategy the above inequality suggests that:
    \begin{align*}
        V_{1-\lambda}(q) > E^q_{\a_{1-\lambda}(q),\b^*}\left[ \sum_{n=1}^{\infty} \lambda (1-\lambda)^{n-1}  G^{X_{m+n}}(i_{n}, j_{n}) \right].
    \end{align*}
    However, as by the definition of $\a_{1-\lambda}(q)$ we have 
    \begin{align*}
        E^q_{\a_{1-\lambda}(q),\b^*}\left[ \sum_{n=1}^{\infty} \lambda (1-\lambda)^{n-1}  G^{X_{m+n}}(i_{n}, j_{n}) \right] \geq V_{1-\lambda}(q),
    \end{align*}
    thus arriving at a contradiction. Therefore, $F_\lambda^m(\{a_n\})  \leq V_{1-\lambda}(q)$ for every $m\geq 1$ as required.
\end{proof}

Next, let us fix some $\a \in \mathcal{A}$ and consider once again the sequence $a_n := E^q_{\a,\b_{\lambda} (q)}\left[ G^{X_n}(i_n, j_n) \right]$. Using Lemma \ref{Claim Sorin} and relation \eqref{Eq. Algebraic Relation} we obtain that 
\begin{align*}
F_\mu^0(\{a_n\}) & = \frac{\mu}{\lambda} F_\lambda^0(\{a_n\}) + \frac{\mu}{\lambda} (\lambda - \mu) \sum_{m=0}^{\infty} (1 - \mu)^m F_\lambda^{m+1}(\{a_n\})\\
& \leq \frac{\mu}{\lambda} V_{1-\lambda}(q) + \frac{\mu}{\lambda} (\lambda - \mu) \sum_{m=0}^{\infty} (1 - \mu)^m V_{1-\lambda}(q) \\
& = V_{1-\lambda}(q).
\end{align*}
As the above was true for any $\a \in \mathcal{A}$, by the definition of $F_\mu^0$, we obtain that $V_{1-\mu}(q) \leq V_{1-\lambda}(q)$ for every $\mu < \lambda$. This suffices for the proof of the monotonicity in the Aumann and Maschler model.

Consider now the case where $M \neq \text{Id}_k$ and let $\pi$ be an invariant distribution of $M$. The proof above can be duplicated, in the same manner with $q=\pi$. Indeed, the argument in the proof of Lemma \ref{Claim Sorin} according to which Player 2 can start following $\b_{\lambda} (\pi)$ starting from stage $m+1$ to guarantee that 
\begin{align*}
       E^{\pi}_{\a,\b^*}\left[ \sum_{n=1}^{\infty} \lambda (1-\lambda)^{n-1}  G^{X_{m+n}}(i_{m+n}, j_{m+n}) \right] \leq V_{1-\lambda}(\pi), \quad \forall \a \in \A, 
    \end{align*}
only requires that starting from stage $m+1$, the dynamics of the states are the same as those starting from the first stage. However, this holds, as the Markov chain's distribution over $K$ at stage $m+1$, is just $\pi M^m = \pi$. Therefore, the result for the general case follows by the same arguments as before. 
\end{document}